\numberwithin{equation}{section}
\newtheorem{Theorem}{Theorem}[section]
\newtheorem*{Theorem*}{Theorem}
\newtheorem{Lemma}[Theorem]{Lemma}
\newtheorem{Proposition}[Theorem]{Proposition}
\newtheorem{Conjecture}[Theorem]{Conjecture}
\newtheorem*{Conjecture*}{Conjecture}
{\theoremstyle{definition}
\newtheorem{Definition}[Theorem]{Definition}
\newtheorem{Example}[Theorem]{Example}
\newtheorem{Remark}[Theorem]{Remark}
}
\def\loc{\operatorname{loc}}
\def\g{\operatorname{g}}
\def\dom{\operatorname{dom}}
\def\range{\operatorname{range}}
\def\id{\operatorname{id}}
\def\proj{\operatorname{proj}}
\def\Diff{\operatorname{Diff}}
\def\ev{\operatorname{ev}}
\def\Aut{\operatorname{Aut}}
\def\Fl{\operatorname{Fl}}
\def\FF{{\mathcal{F}}}
\def\BB{{\mathcal{B}}}
\def\CC{{\mathcal{C}}}
\def\WW{{\mathcal{W}}}
\def\AA{{\mathcal{A}}}
\def\CC{{\mathcal{C}}}
\def\UU{{\mathcal{U}}}
\def\VV{{\mathcal{V}}}
\def\HH{{\mathcal{H}}}
\def\GG{{\mathcal{G}}}
\def\PP{{\mathcal{P}}}
\def\NN{{\mathcal{N}}}
\def\YY{{\mathcal{Y}}}
\def\XX{{\mathcal{X}}}
\def\OO{{\mathcal{O}}}
\def\DS{{\mathscr{D}}}
\def\SS{{\mathscr{S}}}
\def\RB{{\mathbb{R}}}
\def\NB{{\mathbb{N}}}
\def\XF{{\mathfrak{X}}}
\def\pf{{\mathfrak{p}}}
\def\vpf{{\mathfrak{vp}}}
\def\tsep#1{%
 {\@tempdima=\ht\strutbox\advance\@tempdima #1
 \vrule height \@tempdima depth 0pt width 0pt\nobreak\hspace{0pt}%
 }%
}
\def\bsep#1{%
 {\@tempdima=\dp\strutbox \advance\@tempdima #1
 \nobreak\hspace{0pt}\vrule height 0pt depth \@tempdima width 0pt
 }%
}
\begin{document}
%\allowdisplaybreaks

\newcommand{\arXivNumber}{2006.14271}

\renewcommand{\PaperNumber}{043}

\FirstPageHeading

\ShortArticleName{The Holonomy Groupoids of Singularly Foliated Bundles}

\ArticleName{The Holonomy Groupoids\\ of Singularly Foliated Bundles}

\Author{Lachlan Ewen MACDONALD}

\AuthorNameForHeading{L.E.~MacDonald}

\Address{School of Mathematical Sciences, The University of Adelaide,\\ Adelaide, South Australia, 5000, Australia}
\Email{\href{mailto:lachlan.macdonald@adelaide.edu.au}{lachlan.macdonald@adelaide.edu.au}}

\ArticleDates{Received December 07, 2020, in final form April 20, 2021; Published online April 28, 2021}

\Abstract{We define a notion of connection in a fibre bundle that is compatible with a~singular foliation of the base. Fibre bundles equipped with such connections are in plentiful supply, arising naturally for any Lie groupoid-equivariant bundle, and simultaneously generalising regularly foliated bundles in the sense of Kamber--Tondeur and singular foliations. We~define hierarchies of diffeological holonomy groupoids associated to such bundles, which arise from the parallel transport of jet/germinal conservation laws. We~show that the groupoids associated in this manner to trivial singularly foliated bundles are quotients of~Androulidakis--Skandalis holonomy groupoids, which coincide with Androulidakis--Skandalis holonomy groupoids in the regular case. Finally we prove functoriality of all our constructions under appropriate morphisms.}

\Keywords{singular foliation; connection; holonomy; diffeology}

\Classification{53C05; 53C12; 53C29}\vspace{1mm}

\section{Introduction}

In this paper, we extend the notion of a partial connection in a fibre bundle to the singular setting, obtaining \textit{singular partial connections}. Fibre bundles with singular partial connections, which we refer to as \textit{singularly foliated bundles}, are induced by Lie groupoid actions on~fib\-re bundles, and generalise singular foliations and regularly foliated bundles. We~use certain diffeological pseudo-bundles consisting of jets/germs of sections that are conserved by the foliation's flow to systematically construct hierarchies of holonomy groupoids for singularly foliated bundles as diffeological quotients of path spaces, and show that our constructions are functorial under suitably defined morphisms of singularly foliated bundles. For a trivial bundle over a~singular foliation, the associated holonomy groupoid is a quotient of the groupoid defined by Androulidakis--Skandalis~\cite{iakovos2} by a relation which identifies groupoid elements if they act in the same way on the germs of first integrals, and in particular coincides with the Winkelnkemper--Phillips holonomy groupoid of a regular foliation.

Singular foliations are involutive, locally finitely generated families of vector fields on mani\-folds. As~famously proved by Stefan~\cite{stefan} and Sussmann~\cite{sussmann}, such objects integrate to give decompositions of their ambient manifolds into immersed submanifolds, possibly of differing dimension, called leaves. Singular foliations are ubiquitous in mathematics and its applications. For instance, every Poisson manifold has a singular foliation by symplectic leaves, and conversely a singular foliation of a manifold by symplectic leaves suffices to determine a Poisson structure~\cite{cf1}. More generally, any integrable Dirac manifold admits a singular foliation by presymplectic leaves~\cite{courant}. Singular foliations also generalise regular foliations, which are among the primary instances of Connes' noncommutative geometries~\cite{folops, ncg}.

An essential construction for the noncommutative perspective is the \textit{holonomy groupoid} of~a~regular foliation, which was introduced by Winkelnkemper~\cite{wink} as a model for the leaf
space. As~described by Phillips~\cite{holimper}, the holonomy groupoid is in a precise sense the smallest desingularisation of the naive ``space of leaves'' obtained as the quotient by leaves that admits a~(locally Hausdorff) manifold structure. It is upon the locally Hausdorff holonomy groupoid of a foliation (or \'{e}tale versions thereof) that a great deal of progress has been made in index theory~\cite{ben6, ben3, ben4, ben7, ben2, ben5, pcr, cs, goro3, heitsch2, mac2, macr1, ms} and equivariant/cyclic cohomology~\cite{cyctrans,hopf1,diffcyc, backindgeom,crainic1,goro1,goro2,mac1,mosc1, moscorangi}. An alternative toolbox for the study of regular foliations that has been developing since the nineteen-nineties is \textit{diffeology}~\cite{hector, HMVSC}, which provides a way of doing differential topology on conventionally badly behaved spaces $\XX$ by declaring which maps from Euclidean domains into $\XX$ are smooth. Recent progress by the author in this area~\cite{mac3} shows that the holonomy groupoid of a regular foliation is just the largest of an infinite family of \textit{diffeological} holonomy groupoids constructed using solutions of parallel transport differential equations in diffeological bundles. Thus, while the Winkelnkemper holonomy groupoid is the smallest Lie groupoid that integrates a regular foliation, it is far from being the smallest diffeological groupoid that does so.

Defining holonomy groupoids for singular foliations dates back to the mid nineteen-eighties with work of Pradines and Bigonnet~\cite{pradines,pb}. Significant further progress was made by Debord in~\cite{debord, debord2} in the study of holonomy groupoids for singular foliations arising from Lie algebroids whose anchor maps are injective on a dense set (these types of foliations are now known as \textit{Debord foliations}~\cite[Definition~3.6]{lgls}). Such foliations are special in that their holonomy groupoids are Lie groupoids. At present, the most general family of singular foliations for which holonomy groupoids can be defined are those associated to locally finitely generated, involutive families of vector fields, in the spirit of those originally studied by Stefan and Sussmann. The~holonomy groupoids of such general foliations were formulated by Androulidakis and Skandalis in~\cite{iakovos2}. Holonomy groupoids at this level of generality are topologically pathological, but, as is evident in the recent preprint~\cite{VilGar1} of Garmendia and Villatoro, are \textit{diffeologically} quite well-behaved, arising as spaces of classes of leafwise paths identified via their maps on transversal slices. The~years since the Androulidakis--Skandalis construction have seen a great deal of further research conducted into singular foliations and their holonomy, see for instance~\cite{iakovos7,iakovos3,iakovos,iakovos4,ori1,GarZam,VilGar1}.

The present paper constitutes a generalisation of the holonomy groupoid constructions in~\cite{mac3} to singular foliations, and is inspired in part by the recent work of Garmendia and Villatoro~\cite{VilGar1}, who showed how to recover the Androulidakis--Skandalis holonomy groupoid as a quotient of a diffeological path space. In~the author's view, the primary contribution of this paper is a~novel perspective on the holonomy of singular foliations which arises from parallel transport of conservation laws. In~particular, this places the holonomy of singular foliations in the same realm of differential geometry that deals with symmetries and conservation laws of differential equations in the sense of~\cite{varbi,olver}. In~addition, the diffeological pseudo-bundles of germs that we~introduce in this paper are shown to be extensions of jet bundles, which are closely related to~(but topologically distinct from) \'{e}tale spaces of sheaves. We~believe that these pseudo-bundles may be of independent interest and utility. Let us now outline the content of the paper in more detail.

Section~\ref{sec2} consists of a recollection of the well-known definitions and results from singular foliations, jet bundle theory and diffeology that will be required for our constructions later in~the paper. We~remark here that our notation $\Gamma_{k}$ for the $k^{\rm th}$ order jet bundles differs from the~$J^{k}$ that is usually seen in the literature~-- this is to ensure compatibility with the pseudo-bundles of~germs that we introduce in the following section.

Section~\ref{sec3} is where we introduce the key diffeological constructions with which the holonomy groupoids of singularly foliated bundles can be systematically constructed. In~particular, we~asso\-ciate to any sheaf $\SS$ of smooth sections of a fibre bundle $\pi_{B}\colon B\rightarrow M$ over a manifold $M$ a~cano\-nical diffeological pseudo-bundle $\Gamma_{\g}(\SS)$ over $M$, whose fibre over $x\in M$ consists of all the germs $[\sigma]^{\g}_{x}$ at $x$ of elements $\sigma$ of $\SS$ defined around $x$. When $\SS$ is the sheaf of all sections of~$\pi_{B}$, the ``pseudo-bundle of germs'' $\Gamma_{\g}(\pi_{B})$ ought to be thought of as a ``completion'' of the usual tower of jet bundles $\Gamma_{k}(\pi_{B})$ associated to $\pi_{B}$, which is sufficiently rich to capture the behaviour of \textit{non-analytic} smooth sections. The~concept of jet prolongation of a vector field to a jet bundle is extended to \textit{germinal} prolongation of a vector field to a bundle of germs, which is a crucial component in the definition of our holonomy groupoids.

We include in Section~\ref{sec3} a discussion of the relationship between pseudo-bundles of germs and classical sheaf theoretic concepts. In~particular, we show in Proposition~\ref{relsheaf1} that any sui\-t\-ably smooth morphism of sheaves of sections induces a morphism of the corresponding pseudo-bundles of germs, following which we give a counterexample to the converse being true. Finally, Remark~\ref{relsheaf4} shows that while the pseudo-bundle of germs of a sheaf is isomorphic \textit{as a set} to the well-known \'{e}tale space associated to the sheaf, the topology it inherits from its diffeology is (often strictly) contained in the usual \'{e}tale topology. These considerations regarding the topology of pseudo-bundles of germs are not required anywhere in our constructions, and are included out of independent interest.

Section~\ref{sec3} concludes by recalling the diffeological path categories $\PP(\XX)$ of diffeological spa\-ces~$\XX$ introduced in~\cite{mac3}, and generalises the leafwise path category of a regular foliation introduced therein to the singular case. Elements of the leafwise path category $\PP(\FF)$ of a singular foliation~$\FF$ are triples $\big(\gamma,[X]^{\g},d\big)$, where $X$ is some locally-defined, time-dependent vector field in $\FF$, $d>0$ a~real number, and $\gamma\colon \RB_{\geq0} = [0,\infty)\rightarrow M$ an integral curve of $X$ such that $X$ vanishes in a~neigh\-bour\-hood of $\gamma(0)$ and of $\gamma([d,\infty))$. This definition draws from the analogous definition used by Garmendia and Villatoro in~\cite{VilGar1}. We~also define an abstract notion of holonomy groupoid associated to a lifting map into paths of a pseudo-bundle, which serves as the framework for the constructions of Section~\ref{sec4}.

In Section~\ref{sec4} we generalise the notion of a singular foliation to a \textit{singularly foliated bundle}. In~the same way that regularly foliated bundles in the sense of Kamber and Tondeur~\cite{folbund} are defined in terms of a partial connection on the total space of the bundle, our singularly foliated bundles are defined in terms of what we call \textit{singular partial connections}. Roughly speaking, a~singular partial connection $\ell$ in a fibre bundle over a singular foliation allows us to lift vector fields from the foliation of the base to fields on the total space. We~show that singularly foliated bundles generalise singular foliations and regularly foliated bundles, and are associated to all bundles that are equivariant under the action of a Lie groupoid. Associated to any singularly foliated bundle $\pi_{B}\colon B\rightarrow M$ with foliation $\FF$ of the base are pseudo-bundles $\pi_{B}^{k,\FF}\colon \Gamma_{k}(\pi_{B})^{\FF}\rightarrow M$ of germs (if~$k=\g$) or jets (if $k\leq\infty$) of sections which are invariant, to $k^{\rm th}$ order, under flows of~$\FF$. These $\FF$-invariant jets/germs generalise the jets/germs of ``distinguished sections'' considered in~\cite{mac3}, and may be thought of as conservation laws for $\FF$. We~prove the following.

\begin{Theorem}[see Theorem~\ref{liftsmooth}]
Let $\pi_{B}\colon B\rightarrow M$ be a singularly foliated bundle, and let $k$ denote any of the symbols $0,\dots,\infty,\g$. If $\pi_{B}$ admits local sections over each point which are invariant to $k^{\rm th}$ order, then there is a smooth lifting map $L\big(\pi_{B}^{k,\FF}\big)\colon \PP(\FF)\times_{s,\pi_{B}^{k,\FF}}\Gamma_{k}(\pi_{B})^{\FF}\rightarrow \PP\big(\Gamma_{k}(\pi_{B})^{\FF}\big)$ sending leafwise paths in $M$ to the solutions of a parallel transport differential equation in $\Gamma_{k}(\pi_{B})^{\FF}$.
\end{Theorem}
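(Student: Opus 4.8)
The plan is to construct the lifting map $L(\pi_B^{k,\FF})$ explicitly as the solution operator of an ODE on the pseudo-bundle $\Gamma_k(\pi_B)^{\FF}$, built by germinal/jet prolongation of the lifted vector field supplied by the singular partial connection $\ell$, and then to verify smoothness by checking the defining property of plots of a diffeological path space. First I would take a leafwise path $\big(\gamma,[X]^{\g},d\big)\in\PP(\FF)$ together with a point $\xi\in\Gamma_k(\pi_B)^{\FF}$ lying over $\gamma(0)=s(\gamma)$. Using the singular partial connection $\ell$, the time-dependent vector field $X$ in $\FF$ lifts to a time-dependent vector field $\ell(X)$ on $B$ defined over the (open) subset where $X$ is defined; since $X$ vanishes near $\gamma(0)$ and near $\gamma([d,\infty))$, so does $\ell(X)$, and hence also its $k^{\rm th}$-order prolongation $\ell(X)^{(k)}$ to $\Gamma_k(\pi_B)$ (germinal prolongation when $k=\g$). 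The hypothesis that $\pi_B$ admits, over each point, local sections invariant to $k^{\rm th}$ order is exactly what guarantees that the prolonged flow preserves the sub-pseudo-bundle $\Gamma_k(\pi_B)^{\FF}\subseteq\Gamma_k(\pi_B)$ of $\FF$-invariant jets/germs: its integral curves starting in $\Gamma_k(\pi_B)^{\FF}$ stay there, because $\FF$-invariance is an infinitesimal condition along directions in $\FF$, and $\ell(X)^{(k)}$ differentiates precisely along such directions. I would then \emph{define}
\[
L\big(\pi_B^{k,\FF}\big)\big(\gamma,[X]^{\g},d\,;\,\xi\big)
\;:=\;\big(\tilde\gamma,\;[\ell(X)^{(k)}]^{\g},\;d\big),
\]
where $\tilde\gamma\colon\RB_{\geq0}\to\Gamma_k(\pi_B)^{\FF}$ is the unique integral curve of $\ell(X)^{(k)}$ with $\tilde\gamma(0)=\xi$; existence, uniqueness and completeness for all $t\geq 0$ follow from the fact that $\ell(X)^{(k)}$ vanishes outside a compact time-and-space region (it is compactly supported in the relevant sense along the curve), so no blow-up occurs, and $\tilde\gamma$ is eventually constant past time $d$ and constant near $0$. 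By construction $\pi_B^{k,\FF}\circ\tilde\gamma=\gamma$, so the map lands in $\PP(\Gamma_k(\pi_B)^{\FF})$ and covers the identity on $\PP(\FF)$ in the appropriate sense; uniqueness of solutions also gives the cocycle/functoriality identity $L$ respects concatenation and constant paths, which will be needed later.

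The remaining, and genuinely substantive, step is smoothness: one must show that $L(\pi_B^{k,\FF})$ is a diffeologically smooth map out of the fibre product $\PP(\FF)\times_{s,\pi_B^{k,\FF}}\Gamma_k(\pi_B)^{\FF}$. Concretely this means: given any plot $p\colon U\to\PP(\FF)\times_{s,\pi_B^{k,\FF}}\Gamma_k(\pi_B)^{\FF}$ from an open $U\subseteq\RB^n$, the composite $L(\pi_B^{k,\FF})\circ p$ is a plot of $\PP(\Gamma_k(\pi_B)^{\FF})$. Unwinding the definition of the path-space diffeology from Section~\ref{sec3} (following \cite{mac3}), a plot of $\PP(\FF)$ is, up to the equivalence defining $\PP$, a smoothly $U$-parametrised family $\big(\gamma_u,[X_u]^{\g},d\big)$ in which the time-dependent vector fields $X_u\in\FF$ vary smoothly in $u$ — i.e. assemble into a single smooth vector field on $U\times\RB_{\geq0}\times M$ tangent to $\FF$ fibrewise — together with a smooth family of initial conditions $\xi_u\in\Gamma_k(\pi_B)^{\FF}$. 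The key point is then smooth dependence on parameters for solutions of ODEs: the prolonged fields $\ell(X_u)^{(k)}$ form a smooth $U$-family of (compactly supported, hence complete) vector fields on $\Gamma_k(\pi_B)^{\FF}$, so by the smooth dependence of flows on parameters and initial data — valid in the diffeological category for the finite-order bundles $\Gamma_k$ directly, and extended to $k=\g$ because the germinal prolongation is, by the constructions of Section~\ref{sec3}, assembled compatibly from the finite-order prolongations (a germ-valued curve is smooth iff all its finite-order truncations are, together with the germ-tracking condition) — the family $u\mapsto\tilde\gamma_u$ is a smooth family of curves into $\Gamma_k(\pi_B)^{\FF}$. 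This is precisely the statement that $L(\pi_B^{k,\FF})\circ p$ is a plot.

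I expect the main obstacle to be handling the case $k=\g$, where $\Gamma_\g(\pi_B)^{\FF}$ is only a diffeological pseudo-bundle and not a manifold, so that classical ODE theorems do not apply off the shelf. The resolution I would use is the one built into the setup of Section~\ref{sec3}: the germinal prolongation $\ell(X)^{(\g)}$ is constructed so that its flow on germs is determined by, and intertwines with, the flows of the finite-order prolongations $\ell(X)^{(k)}$ on the genuine manifolds $\Gamma_k(\pi_B)$, via the natural maps $\Gamma_\g(\pi_B)\to\Gamma_k(\pi_B)$; a curve into $\Gamma_\g(\pi_B)$ is a plot precisely when all its projections to the $\Gamma_k(\pi_B)$ are smooth and are compatible under the bonding maps, and — crucially — when the underlying section-germ is "carried along" consistently (this is the content of the germ/étale-space discussion, Remark~\ref{relsheaf4}). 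Smoothness of $L(\pi_B^{\g,\FF})$ thus reduces, level by level, to the finite-order case plus a verification that the parallel-transported germ at time $t$ really is the germ of the parallel-transported section — which is exactly the assertion that the flow of $\ell(X)$ on $B$ pushes forward local $\FF$-invariant sections to local $\FF$-invariant sections, already used above to see that the map lands in $\Gamma_k(\pi_B)^{\FF}$ in the first place. A second, more routine, technical point to dispatch is that the "eventually constant / constant near zero" requirements built into the definition of $\PP$ are preserved, which is immediate from the vanishing of $\ell(X)$ near $\gamma(0)$ and on $\gamma([d,\infty))$, together with the fact that $\tilde\gamma$ and $\gamma$ have the same parameter domain and the same cutoff time $d$.
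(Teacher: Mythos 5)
Your overall strategy (define the lift by flowing the prolonged field $\ell(X)^{(k)}$, then check smoothness via smooth dependence of flows on parameters) matches the paper's, but three of your key steps have genuine gaps. First, your justification that the prolonged flow preserves $\Gamma_{k}(\pi_{B})^{\FF}$ is wrong in attribution and insufficient in substance: the hypothesis that $\pi_{B}$ admits enough conservation laws only guarantees the fibres of $\Gamma_{k}(\pi_{B})^{\FF}$ are nonempty, and the phrase ``$\FF$-invariance is an infinitesimal condition along directions in $\FF$'' does not prove invariance is transported. The point is that invariance of the transported jet/germ at $\gamma(t)$ must be checked against \emph{every} $Y\in\FF$ defined near $\gamma(t)$, not just $X$; the paper's proof conjugates such a $Y$ back through the flow, using that $\ell$ preserves Lie brackets to get $\big(\Fl^{\ell(X)}_{0,t}\big)_{*}(\ell(Y)) = \ell\big(\big(\Fl^{X}_{0,t}\big)_{*}(Y)\big)$, and then invokes the nontrivial fact that $\FF$ is invariant under its own (time-dependent) flows so that $\varphi^{-1}_{*}(Y)\in\FF$. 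Without this computation the claim that the lift lands in $\Gamma_{k}(\pi_{B})^{\FF}$ is unproved. Second, your completeness argument fails: $X$ vanishes only near $\gamma(0)$ and near $\gamma([d,\infty))$, not along the whole curve, and $\ell(X)$ is not compactly supported in the fibre directions, so its flow can blow up in finite time. This is exactly why completeness is an \emph{axiom} of a singular partial connection (Definition~\ref{singfolbund}, item~2); the paper explicitly warns that it does not follow from $\ell$ being a partial right inverse and gives the counterexample $\ell\big(f\frac{\partial}{\partial x}\big)(x,y) = f(x)\big(\frac{\partial}{\partial x}+y^{2}\frac{\partial}{\partial y}\big)$. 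You must invoke the completeness axiom here, not derive it.

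Third, your treatment of the case $k=\g$ rests on a mischaracterisation of the diffeology: you propose that a curve into $\Gamma_{\g}(\pi_{B})$ is a plot precisely when its finite-order truncations are smooth and compatible, and reduce existence, uniqueness and smoothness ``level by level'' to the jet bundles. But the germ pseudo-bundle carries the quotient of the functional diffeology on locally defined sections, which is strictly finer than anything obtained from the tower of jet bundles, and the paper explicitly notes that $\Gamma_{\g}(\pi_{B})$ is neither a manifold nor a projective limit of manifolds. Consequently uniqueness at the germ level requires a separate argument (the paper uses the description of $T_{(x,[\sigma]^{\g}_{x})}\Gamma_{\g}(\pi_{B})$ from Proposition~\ref{tangentgerm} to show the difference of two solutions is constant in $t$ on a fixed neighbourhood), and smoothness requires working directly with the space $(\Gamma_{\pi_{B}})_{\loc}$ of locally defined sections and a parametrised-flow lemma (Lemma~\ref{technicallemma}) controlling the common domains of definition. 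Your proposal correctly anticipates that smooth dependence on parameters is the engine of the smoothness proof, but the domain bookkeeping in the functional diffeology is where the real work lies and cannot be bypassed by projecting to jets.
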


In particular, invariant jets/germs about a point can be extended to invariant jets/germs along any path in $\FF$ starting at that point. Each of the lifting maps $L\big(\pi_{B}^{k,\FF}\big)$ of a singularly foliated bundle $\pi_{B}\colon B\rightarrow M$ induces a transport functor from the leafwise path category $\PP(\FF)$ to the diffeological groupoid of diffeomorphisms between the fibres of $\Gamma_{k}(\pi_{B})^{\FF}$. The~fibres of this functor determine an equivalence relation on $\PP(\FF)$, and the quotient of $\PP(\FF)$ by this equivalence relation is the \textit{holonomy groupoid} $\HH\big(\pi_{B}^{k,\FF}\big)$. The~arguments of~\cite[Theorem~5.15]{mac3} then apply to show that if in particular $\pi_{B}$ admits invariant germs of sections about each point then we have a hierarchy
\begin{equation}\label{hierintro}
\begin{tikzcd}[row sep = large]
& & & \HH\big(\pi_{B}^{\g,\FF}\big) \ar[d,"\Pi_{B}^{\infty,\g}"] & & & \\ & & & \HH\big(\pi_{B}^{\infty,\FF}\big) \ar[dl,"\Pi_{B}^{k+1,\infty}"] \ar[dr,"\Pi_{B}^{k,\infty}"'] \ar[drrr,"\Pi_{B}^{0,\infty}"'] & & &\\ & \cdots\ar[r] & \HH\big(\pi_{B}^{k+1,\FF}\big) \ar[rr,"\Pi_{B}^{k,k+1}"'] & & \HH\big(\pi_{B}^{k,\FF}\big) \ar[r] & \cdots \ar[r] & \HH\big(\pi_{B}^{0,\FF}\big)
\end{tikzcd}
\end{equation}
of diffeological holonomy groupoids associated to the tower
\begin{gather*}
\Gamma_{\g}(\pi_{B})\rightarrow\Gamma_{\infty}(\pi_{B})\rightarrow\cdots\rightarrow \Gamma_{1}(\pi_{B})\rightarrow B
\end{gather*}
of germ/jet bundles. Smaller hierarchies also exist when $\pi_{B}$ only admits invariant $k$-jets of~sections about each point, with all singularly foliated bundles admitting invariant $0$-jets of~sections about each point.

Following this, we prove in Theorem~\ref{agree} that in the case of a singular foliation $(M,\FF)$, with $M$ of dimension $n$, the holonomy groupoid $\HH\big(\pi_{M\times\RB^{n}}^{\g,\FF}\big)$ associated to the trivial singularly foliated bundle $M\times\RB^{n}\rightarrow M$ is the quotient of the Garmendia--Villatoro holonomy groupoid by an equivalence relation which identifies groupoid elements if and only if they act identically on first integrals. Thus, by~\cite[Theorem~5.5]{VilGar1}, our construction is a quotient of the well-known Androulidakis--Skandalis holonomy groupoid~\cite{iakovos2} in such cases. We~also show that our construction agrees with that of Garmendia--Villatoro for regular foliations. Section~\ref{sec4} is concluded by~defining a class of morphisms of singularly foliated bundles, and we prove in Theorem~\ref{functoriality} that the hierarchy \eqref{hierintro} of holonomy groupoids is functorial under such morphisms. The~paper is concluded in Section~\ref{sec5} with a discussion of open questions.

\section{Background and prerequisites}\label{sec2}

\subsection{Notational conventions}

All manifolds and fibre bundles are assumed to be smooth, Hausdorff, without boundary and connected, and all maps thereof assumed to be smooth. Given any manifold $M$, we use $\XF_{M}$ to denote the sheaf of smooth vector fields on $M$, and $C^{\infty}_{M}$ the sheaf of smooth, real-valued functions on $M$. If $\pi_{B}\colon B\rightarrow M$ is a fibre bundle, then we denote by $\Gamma_{\pi_{B}}$ the sheaf of sections of~$\pi_{B}$. Given a smooth map $f\colon M\rightarrow N$ of manifolds, and sheaves $\SS_{M}$ on $M$ and $\SS_{N}$ on $N$, we~denote by $f_{!}\SS_{M}$ the pushforward of $\SS_{M}$ and by $f^{!}\SS_{N}$ the pullback of $\SS_{N}$~\cite[p.~65]{hartshorne}. Given a (possibly time-dependent) vector field $X = X(t,-)$ on an open set $\OO$ in a manifold $M$, we use $\Fl^{X}$ to denote its \textit{flow}. That is, for $x\in\OO$ and $t_{0}\in\RB$, $t\mapsto\Fl^{X}_{t,t_{0}}(x)$ is the unique solution to the initial value problem
\begin{gather*}
\frac{\rm d}{{\rm d}s}\bigg|_{t}f(s;x) = X(t,f(t;x)),\qquad f(t_{0};x) = x
\end{gather*}
defined by the vector field $X$~\cite[p.~236]{leesmth}. For time-independent $X$, $\Fl^{X}_{t,t_{0}}$ depends only on $t-t_{0}$ and will be denoted simply by $\Fl^{X}_{t-t_{0}}$. Vector fields are assumed time-independent unless otherwise stated.

\subsection{Singular foliations}

We begin by recalling the standard sheaf-theoretic definition of a singular foliation.

\begin{Definition}\label{singdef}
Let $M$ be a smooth manifold. A~\textit{singular foliation} on $M$ is a subsheaf $\FF\subset \XF_{M}$ of $C^{\infty}_{M}$-modules on $M$ for which the following hold.
\begin{enumerate}\itemsep=0pt
\item $\FF$ is \textit{closed under Lie brackets} in the sense that for every open set $\OO$ of $M$, $\FF(\OO)$ is closed under the Lie bracket of vector fields.
\item $\FF$ is \textit{locally finitely generated} in the sense that for each $x\in M$, there exists an open neighbourhood $\OO_{x}$ of $x$ and a finite family $X_{1},\dots,X_{k}$ of elements of $\FF(\OO_{x})$ such that $\FF(\OO_{x})$ is the $C^{\infty}_{M}(\OO_{x})$-span of the $X_{i}$.
\end{enumerate}
\end{Definition}

One can alternatively describe a singular foliation of a manifold $M$ as a locally finitely generated submodule of the compactly supported vector fields on $M$ which is closed under Lie brackets, as in~\cite{iakovos2}. By~\cite[Remark 1.8]{GarZam} these definitions are equivalent. One of the most important facts regarding singular foliations is the Stefan--Sussmann integration theorem~\cite{stefan, sussmann}.

\begin{Theorem}%\label{stefansussmann}
Let $M$ be a manifold with a singular foliation $\FF$. Then $\FF$ integrates to give a~decom\-position of $M$ into smoothly immersed submanifolds called leaves.
\end{Theorem}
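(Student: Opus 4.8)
The plan is to realise the leaves as the orbits of the pseudogroup of local diffeomorphisms generated by the time-$t$ flows of vector fields in $\FF$, following the classical approach of Sussmann and Stefan. First I would record the basic compatibility between flows and the module $\FF$: if $X\in\FF(\OO)$ and $Y\in\FF(\OO)$, then for each $t$ the pushforward $(\Fl^{X}_{t})_{*}Y$ again lies in $\FF$ on its domain. This is exactly where closure under Lie brackets enters, via the identity $\frac{\mathrm d}{\mathrm d t}(\Fl^{X}_{-t})_{*}Y=(\Fl^{X}_{-t})_{*}[X,Y]$ together with the observation that, writing $Y$ and $[X,Y]$ in terms of finitely many local generators, the coefficient functions satisfy a linear ODE whose solution keeps them in the span of those generators. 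Consequently the pseudogroup $\mathcal{G}(\FF)$ generated by all maps $\Fl^{X}_{t}$, $X\in\FF$, satisfies $\varphi_{*}\FF=\FF$ for every $\varphi\in\mathcal{G}(\FF)$.

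Next I would define the \emph{orbit} of $x\in M$ to be $L_{x}:=\{\varphi(x)\colon\varphi\in\mathcal{G}(\FF),\ x\in\dom\varphi\}$; these clearly partition $M$. To put a smooth structure on $L_{x}$, fix $y\in L_{x}$ and local generators $X_{1},\dots,X_{k}$ of $\FF$ near $y$ as furnished by local finite generation, and consider the map
\[
\rho\colon (t_{1},\dots,t_{k})\longmapsto\big(\Fl^{X_{1}}_{t_{1}}\circ\cdots\circ\Fl^{X_{k}}_{t_{k}}\big)(y)
\]
defined near $0\in\RB^{k}$, together with the analogous maps built from arbitrary finite compositions of flows of generators; all of their images lie in $L_{x}$. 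One topologises $L_{x}$ so that each such $\rho$ is smooth. The essential point — the Sussmann orbit theorem — is that the rank of $\rho$ at $0$ equals the dimension of the subspace $T_{y}L_{x}:=\{(\mathrm d\varphi)_{z}(X(z))\colon\varphi(z)=y,\ X\in\FF\}\subseteq T_{y}M$, that this dimension is independent of $y\in L_{x}$, and that $L_{x}$ with this structure is an immersed submanifold of $M$ whose tangent space at each $y$ is precisely $T_{y}L_{x}$.

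The main obstacle is exactly this last step: showing that the pointwise ``tangent distribution'' $TL_{x}$ has locally constant rank along the orbit, and that $L_{x}$, assembled from the charts $\rho$, is Hausdorff, second countable, and immersed in $M$ with the expected tangent space. The rank-constancy is a direct consequence of the invariance $\varphi_{*}\FF=\FF$ established at the outset. The manifold structure requires checking that two charts $\rho$, $\rho'$ overlap smoothly, which again reduces to the fact that transporting one family of generators along a flow $\Fl^{X}_{t}$ produces, via a smooth invertible change of the flow-time parameters, a family describing the same local piece of the orbit. Once $L_{x}$ is known to be an immersed submanifold, involutivity of $\FF$ forces every $X\in\FF$ to be tangent to $L_{x}$, so the $L_{x}$ are integral manifolds and $M=\bigsqcup_{x}L_{x}$ is the desired decomposition into leaves. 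Since all of this is classical, in the body of the paper I would simply cite \cite{stefan, sussmann}.
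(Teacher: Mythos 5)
The paper gives no proof of this statement; it simply cites Stefan and Sussmann, which is exactly what your proposal ultimately does. Your sketch of the classical argument (flow-invariance of $\FF$ obtained from the linear ODE satisfied by the coefficients with respect to local generators, followed by Sussmann's orbit theorem to put an immersed-submanifold structure on the orbits) is the standard and correct route.
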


The following important theorem, due to Androulidakis and Skandalis~\cite[Theorem~0.1]{iakovos2}, says that the leaves of a singular foliation always arise as the orbits of a certain topological groupoid, called the \textit{holonomy groupoid} of the foliation.

\begin{Theorem}\label{integrate}
Let $M$ be a manifold with a singular foliation $\FF$. Then there exists a topological groupoid $\HH(\FF)$ for which the following hold.
\begin{enumerate}\itemsep=0pt
\item[$1.$] $\HH(\FF)$ \textit{integrates} the foliation $\FF$, in the sense that its orbits are the leaves of $\FF$.
\item[$2.$] $\HH(\FF)$ is \textit{minimal} in the sense that if $G$ is any Lie groupoid which integrates $\FF$, there is an~open subgroupoid $G_{0}$ of $G$ and a surjective morphism $G_{0}\rightarrow\HH(\FF)$ of topological groupoids.
\end{enumerate}
The groupoid $\HH(\FF)$ is called the \textit{holonomy groupoid} of $\FF$.
\end{Theorem}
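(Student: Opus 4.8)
The plan is to realise $\HH(\FF)$ as a quotient of a ``path holonomy atlas'' of \emph{bi-submersions}, the device that makes integration possible in the singular setting despite the absence of any transverse manifold. Call a bi-submersion of $(M,\FF)$ a triple $(U,t,s)$ consisting of a manifold $U$ and submersions $s,t\colon U\to M$ such that the $C^{\infty}(U)$-module $s^{-1}\FF$ of compactly supported vector fields on $U$ -- generated by $\Gamma_{c}(\ker ds)$ together with all local $s$-lifts of elements of $\FF$ -- satisfies
\begin{gather*}
s^{-1}\FF\;=\;\Gamma_{c}(\ker ds)+\Gamma_{c}(\ker dt)\;=\;t^{-1}\FF.
\end{gather*}
First I would build, for each $x\in M$ and each finite generating family $X_{1},\dots,X_{n}$ of $\FF$ on a neighbourhood $\OO_{x}$ furnished by Definition~\ref{singdef}, the \emph{canonical} bi-submersion $U_{X}:=W\times\OO_{x}$ for a small ball $W\subset\RB^{n}$ about the origin, with $s(\xi,y)=y$ and $t(\xi,y)=\Fl^{X_{\xi}}_{1}(y)$, where $X_{\xi}:=\sum_{i}\xi_{i}X_{i}$. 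Checking the displayed identity for $U_{X}$ is a rank computation: $\ker ds$ is spanned by the $\partial/\partial\xi_{i}$, which at $\xi=0$ are mapped by $dt$ to the $X_{i}$, and the $X_{i}$ span $\FF(\OO_{x})$.

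Next I would organise the atlas. A \emph{morphism} of bi-submersions is a smooth map $f\colon U\to U'$ defined near a point with $s'\circ f=s$ and $t'\circ f=t$; the \emph{inverse} $U^{-1}$ exchanges the roles of $s$ and $t$; and the \emph{composition} $U\circ U'$ is the fibre product $U\times_{s,M,t'}U'$ equipped with the obvious pair of submersions. One verifies that these operations send canonical bi-submersions to bi-submersions, so that the family $\mathcal{U}$ of all open subsets of finite compositions of canonical bi-submersions and their inverses is closed under inversion and composition. Set
\begin{gather*}
\HH(\FF)\;:=\;\Big(\coprod_{(U,t,s)\in\mathcal{U}}U\Big)\big/\!\sim,
\end{gather*}
where $u\sim u'$ exactly when some morphism of bi-submersions carries $u$ to $u'$; the source and target are induced by the maps $s$ and $t$, inversion by $U\mapsto U^{-1}$, multiplication by $(U,U')\mapsto U\circ U'$, and the unit embedding $M\hookrightarrow\HH(\FF)$ comes from the trivial bi-submersion $(M,\id,\id)$. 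Giving each $U$ its manifold topology and $\HH(\FF)$ the quotient topology makes all the structure maps continuous.

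The crux -- and the step I expect to be the main obstacle -- is the local-structure analysis required to prove that $\sim$ is an equivalence relation (the non-trivial point being symmetry) and that the partial multiplication descends to a well-defined, associative operation on the quotient. Both reduce to the key lemma that a morphism $f\colon U\to U'$ of bi-submersions with $\dim U=\dim U'$ is a local diffeomorphism: one differentiates $s'\circ f=s$ and $t'\circ f=t$ and invokes the bi-submersion identities for $U$ and $U'$ to see that $df$ carries $\ker ds$ isomorphically onto $\ker ds'$, after which a dimension count finishes the argument; one then shows that any two members of the atlas realising the same ``germ of holonomy'' near a point are joined by a span of such morphisms. Granting this, assertion~1 is routine: on a canonical bi-submersion at $x$ the map $t$ takes values in the leaf through $x$ by the Stefan--Sussmann theorem, while conversely every point of that leaf is reached by a finite concatenation of flow segments of local generators of $\FF$ -- hence lies in the $t$-image of some composition of canonical bi-submersions -- so the $\HH(\FF)$-orbit of $x$ is precisely its leaf.

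For the minimality assertion~2, let $G$ be a Lie groupoid over $M$ integrating $\FF$, so that its anchor sends $\Gamma_{c}(\ker ds_{G})$ onto $\FF$. A neighbourhood $G_{0}$ of the unit section is itself a bi-submersion with structure maps $s_{G},t_{G}$, and through each point of $G_{0}$ passes a local bisection exponentiating a chosen generating family, which produces a morphism from the corresponding canonical bi-submersion into $G_{0}$. The key lemma above guarantees that these local morphisms are mutually compatible, so they assemble into a continuous map $G_{0}\to\HH(\FF)$ that extends to a groupoid morphism on the open subgroupoid of $G$ generated by $G_{0}$; it is surjective because the canonical bi-submersions already surject onto $\HH(\FF)$ while each of them is dominated by $G_{0}$. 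As everywhere in the argument, the one delicate point is the compatibility of the various local morphisms, which in each instance comes back to the rank lemma.
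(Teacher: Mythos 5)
First, a point of calibration: the paper does not prove this statement at all --- it is quoted from Androulidakis--Skandalis \cite[Theorem~0.1]{iakovos2}, and the only comment offered is the one-line remark that their groupoid is built from ``bisubmersions'' defined by iterated flows. Your proposal is therefore not being measured against an internal argument; it is a reconstruction of the Androulidakis--Skandalis construction itself, and its overall architecture (canonical path-holonomy bi-submersions, an atlas closed under inversion and composition, the quotient by the ``connected by a morphism'' relation, and the treatment of a neighbourhood of the units of an integrating Lie groupoid $G$ as a bi-submersion adapted to that atlas) is the right one.

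The genuine gap is the lemma you lean on at the crux, namely that a morphism $f\colon U\rightarrow U'$ of bi-submersions with $\dim U=\dim U'$ is a local diffeomorphism. This is false. Take $M=\RB$ with $\FF$ generated by $\partial_{x}$, and use the redundant generating family $X_{1}=X_{2}=\partial_{x}$ (nothing in Definition~\ref{singdef} forbids this). The associated canonical bi-submersion is $U=W\times\RB$ with $W\subset\RB^{2}$, $s(\xi_{1},\xi_{2},y)=y$ and $t(\xi_{1},\xi_{2},y)=y+\xi_{1}+\xi_{2}$, and the map $f(\xi_{1},\xi_{2},y)=(\xi_{1}+\xi_{2},0,y)$ is a morphism from $U$ to itself of rank $2$ everywhere. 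The differential of a morphism does carry $\ker{\rm d}s$ into $\ker{\rm d}s'$, but nothing forces injectivity there, so the dimension count never gets started; the same collapse occurs unavoidably over singular points of a genuinely singular foliation, where $\ker{\rm d}s_{u}\cap\ker{\rm d}t_{u}$ is large. Consequently your route to the symmetry of $\sim$ (and to the compatibility of the local morphisms $G_{0}\rightarrow\HH(\FF)$ in part~2) does not close. The correct replacement, which is the actual technical heart of \cite{iakovos2}, is the theory of \emph{bisections}: every bi-submersion admits through each point a locally closed submanifold on which both $s$ and $t$ restrict to diffeomorphisms onto open sets of $M$, and one proves (their Corollary~2.11) that a morphism defined near $u$ and sending $u$ to $u'$ exists if and only if $u$ and $u'$ carry bisections inducing the same local diffeomorphism of $M$ --- a condition manifestly symmetric in $u$ and $u'$. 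Symmetry of $\sim$, well-definedness of the multiplication, and the assembly of the map $G_{0}\rightarrow\HH(\FF)$ all follow from this characterisation rather than from any \'{e}taleness of morphisms. With the rank lemma replaced by the bisection lemma, the rest of your outline is sound.
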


Androulidakis and Skandalis built their groupoid using ``bisubmersions'' defined by iterated flows of the vector fields defining the foliation. Recent work by Garmendia and Villatoro~\cite{VilGar1} recovers the Androulidakis--Skandalis holonomy groupoid as a diffeological quotient of a certain path space. It is the Garmendia--Villatoro construction that most closely resembles the construction we give in this article.

A very large family of examples of singular foliations is furnished by \textit{Lie algebroids} as we now describe.

\begin{Example}%\label{algebroid}
A \textit{Lie algebroid} consists of a (finite rank) vector bundle $\pi_{A}\colon A\rightarrow M$ over a manifold $M$ together with a Lie bracket on the space $\Gamma(M;A)$ of its smooth sections and a morphism $\rho\colon A\rightarrow TM$ of vector bundles known as the \textit{anchor map} which preserves Lie brackets on sections. Then the image of the compactly supported smooth sections of $A$ under~$\rho$ is a singular foliation~\cite[Example 1.3]{iakovos2}. Of particular importance are Lie algebroids that are \textit{integrable} in the sense that they integrate to Lie groupoids. The~characterisation of which Lie algebroids are integrable in this sense was solved by Crainic and Fernandes~\cite{cf2}. Lie algebroids arise in many geometric situations.
\begin{enumerate}\itemsep=0pt
\item Lie algebras, which are Lie algebroids over a point and are integrated by Lie groups by Lie's third theorem.
\item Regular foliations, defined by Lie algebroids with \textit{injective} anchor map. Such foliations are integrated by their Winkelnkemper--Phillips holonomy groupoids~\cite{holimper,wink}.
\item Debord foliations, associated to algebroids whose anchor maps are injective on a dense set. The~integration problem for these foliations was solved by Debord in~\cite{debord, debord2}.
\item Poisson manifolds $M$, whose Lie algebroids $T^{*}M\rightarrow M$ are equipped with a Lie bracket arising from the Poisson structure. Poisson manifolds admit (and are characterised by) singular foliations by symplectic leaves~\cite[p.~113]{cf2}.
\end{enumerate}
\end{Example}

We will not be making use of Lie algebroids in this article. Like the constructions of~And\-rou\-lidakis--Skandalis and Garmendia--Villatoro, our constructions will be founded on the more general Definition~\ref{singdef}.

\subsection{Jet bundles and prolongation}

We recall in this subsection some well-known theory of jet bundles, drawn primarily from~\cite{varbi, saunders}. Although the reader is likely familiar with this theory already, we include the following outline both to introduce our rather unconventional notation (which we choose for consistency with the bundles of germs to be introduced in Section~\ref{pseudobundlesofgerms}) and to point out the structures that will be~most relevant in our constructions.

\begin{Definition}
Let $\pi_{B}\colon B\rightarrow M$ be a fibre bundle, with fibre $F$, and let $k\geq0$. We~say that two local sections $\sigma_{1}$ and $\sigma_{2}$ of $\pi_{B}$ defined in a neighbourhood of $x\in M$ have the same~$k$-\textit{jet at}~$x$ if $\sigma_{1}(x) =b= \sigma_{2}(x)$, and for any local coordinate neighbourhood $\OO\cong\OO_{M}\times\OO_{F}$ of $b$, one~has
\begin{gather*}
\frac{\partial^{|I|}\sigma^{i}_{1}}{\partial x^{I}}(x) = \frac{\partial^{|I|}\sigma^{i}_{2}}{\partial x^{I}}(x)
\end{gather*}
for all $i = 1,\dots,\dim(F)$, and all multi-indices $I$ with $|I|\leq k$. Having the same $k$-jet at a point~$x$ is an equivalence relation on the set of local sections defined about $x$, and we denote the $k$-jet equivalence class of any such local section by $[\sigma]^{k}_{x}$.
\end{Definition}

The $k$-jets of local sections fit into a fibre bundle in a natural way.

\begin{Definition}
Let $\pi_{B}\colon B\rightarrow M$ be a fibre bundle, and let $k\geq 0$. For each $x\in M$, denote the set of all $k$-jets of local sections defined near $x$ by $\Gamma_{k}(\pi_{B})_{x}$, and define
\begin{gather*}
\Gamma_{k}(\pi_{B}):=\bigsqcup_{x\in M}\Gamma_{k}(\pi_{B})_{x},
\end{gather*}
with $\pi_{B}^{k}\colon \Gamma_{k}(\pi_{B})\rightarrow M$ denoting the canonical projection. Define $\pi_{B}^{0,k}\colon \Gamma_{k}(\pi_{B})\rightarrow B$ by
\begin{gather*}
\pi_{B}^{0,k}\big(x,[\sigma]^{k}_{x}\big):=\sigma(x),
\end{gather*}
and observe that any choice of local coordinate trivialisation $(x^{i},f^{\alpha})$ on $\OO$ about a point $b\in B$ determines coordinates
\begin{gather*}%\label{coordsjets}
\big(x^{i},f^{\alpha},f^{\alpha}_{i},\dots,f^{\alpha}_{I}\big):=\bigg(x^{i},f^{\alpha},\frac{\partial f^{\alpha}}{\partial x^{i}},\dots,\frac{\partial^{|I|}f^{\alpha}}{\partial x^{I}}\bigg)
\end{gather*}
on the set $\big(\pi^{0,k}_{B}\big)^{-1}(\OO)$. These coordinates give the projection $\pi_{k}\colon \Gamma_{k}(\pi_{B})\rightarrow M$ the structure of~a~fibre bundle, called the $k^{\rm th}$ \textit{order jet bundle} of $\pi_{B}$.
\end{Definition}

The jet bundles of a fibre bundle $\pi_{B}\colon B\rightarrow M$ admit projections $\pi_{B}^{k,l}\colon \Gamma_{l}(\pi_{B})\rightarrow \Gamma_{k}(\pi_{B})$ defined by
\begin{gather*}
\pi_{B}^{k,l}\big([\sigma]^{l}_{x}\big):=[\sigma]^{k}_{x},\qquad [\sigma]^{l}_{x}\in \Gamma_{l}(\pi_{B})
\end{gather*}
for any $l\geq k$, and these projections form a projective system. The~projective limit of this system, denoted $\Gamma_{\infty}(\pi_{B})$, inherits a natural smooth structure as a projective limit of manifolds \cite[Chapter One, Section A]{varbi} (equivalently via the projective limit diffeology~\cite[Section 1.39]{diffeology}). The~space $\Gamma_{\infty}(\pi_{B})$ is usually identified with the set of $\infty$-jets of local sections of $\pi_{B}$, and admits a canonical projection $\pi_{B}^{\infty}\colon \Gamma_{\infty}(\pi_{B})\rightarrow M$ given by
\begin{gather*}
\pi_{B}^{\infty}\big([\sigma]^{\infty}_{x}\big):=x,\qquad [\sigma]^{\infty}_{x}\in \Gamma_{\infty}(\pi_{B}).
\end{gather*}
One therefore obtains a hierarchy of jet bundles
\begin{gather*}
\Gamma_{\infty}(\pi_{B})\rightarrow\cdots\xrightarrow{\pi_{B}^{k,k+1}} \Gamma_{k}(\pi_{B})\xrightarrow{\pi_{B}^{k-1,k}}\cdots\xrightarrow{\pi_{B}^{1,2}} \Gamma_{1}(\pi_{B})\xrightarrow{\pi_{B}^{0,1}}B\xrightarrow{\pi_{B}}M.
\end{gather*}

Since we will be concerned primarily with singular foliations, which arise from families of~vec\-tor fields, we will need to know about vector fields on jet bundles. A~particularly important class of vector fields on fibre bundles, in which we will be primarily interested, is those that are \textit{projectable} in the following sense.

\begin{Definition}%\label{projectable}
Let $\pi_{B}\colon B\rightarrow M$ be a fibre bundle. A~vector field $X$ on $B$ is said to be \textit{pro\-jec\-table} if there is a vector field $(\pi_{B})_{*}(X)$ on $M$ for which
\begin{gather}\label{projectableeqn}
{\rm d}\pi_{B}\circ X = (\pi_{B})_{*}(X)\circ \pi_{B}
\end{gather}
on all of $B$. We~denote by $\XF_{\proj}(B)$ the set of projectable vector fields on $B$.
\end{Definition}

Projectable vector fields on a bundle prolong in a natural way to vector fields on the associated jet bundles.

\begin{Definition}
Let $\pi_{B}\colon B\rightarrow M$ be a fibre bundle, and let $X\in\XF_{\proj}(B)$ be a (possibly time-dependent) projectable vector field. For $1\leq k<\infty$, the \textit{$k$-jet prolongation} of $X$ is the vector field $\pf^{k}(X)\in\XF(\Gamma_{k}(\pi_{B}))$ defined by
\begin{gather*}
\pf^{k}(X)\big(x,[\sigma]^{k}_{x}\big):=\frac{\rm d}{{\rm d}t}\bigg|_{t=0}\Big(\Fl^{(\pi_{B})_{*}X}_{t,0}(x),\big[\Fl^{X}_{t,0}\circ \sigma\circ\Fl^{(\pi_{B})_{*}X}_{0,t}\big]^{k}_{\Fl^{(\pi_{B})_{*}X}_{t,0}(x)}\Big)
\end{gather*}
for all $[\sigma]^{k}_{x}\in \Gamma_{k}(\pi_{B})$. We~denote by $\XF_{\proj}(\Gamma_{k}(\pi_{B}))$ the image of $\pf^{k}$.
\end{Definition}

It will be useful later to note here that if $X$ is a time-independent projectable vector field on~a~bundle $\pi_{B}$, it is given in coordinates by $X = a^{i}\frac{\partial}{\partial x^{i}} + b^{\alpha}\frac{\partial}{\partial f^{\alpha}}$, where $a^{i}$ are smooth functions depending only on the $x^{i}$ while the $b^{\alpha}$ depend on both the $x^{i}$ and the $f^{\alpha}$. Then for any $1\leq k<\infty$, the $k$-jet prolongation $\pf^{k}(X)$ of $X$ is given in coordinates over a point $(x^{i},f^{\alpha},f^{\alpha}_{i},\dots)\in \Gamma_{k}(\pi_{B})$ by the formula (cf.~\cite[Theorem~2.36]{olver})
\begin{gather}\label{prolongation}
\pf^{k}(X) = a^{i}D_{i}^{(k)}+\sum_{|I|=k}\!\bigg(D^{(k)}_{I}b^{\alpha}-\sum_{J\subset I}\big(D^{(k)}_{I\setminus J}a^{i}\big)f^{\alpha}_{Ji}\bigg)\frac{\partial}{\partial f^{\alpha}_{I}}+\!\sum_{|I|=0}^{k-1}D^{(k)}_{I}(b^{\alpha}-a^{i}f^{\alpha}_{i})\frac{\partial}{\partial f^{\alpha}_{I}},
\end{gather}
where the sum over $J\subset I$ is a sum over all \textit{strict} subsets of the multi-index $J$, and where we have absorbed the constants arising from the symmetry of mixed partial derivatives into our notation as in~\cite[equation~(1.15)]{varbi}. Here $D^{(k)}_{I} = D^{(k)}_{i_{1}}\cdots D^{(k)}_{i_{k}}$, where $D^{(k)}_{i}$ is the \textit{total derivative operator} defined by $D^{(k)}_{i} = \frac{\partial}{\partial x^{i}}+\sum_{|I|=0}^{k-1}f^{\alpha}_{Ii}\frac{\partial}{\partial f^{\alpha}_{I}}$.

The argument of Proposition~\ref{germprol} can be used to deduce the following fact, which justifies our choice of notation in denoting the image of $\pf^{k}$ in $\XF(\Gamma_{k}(\pi_{B}))$ by $\XF_{\proj}(\Gamma_{k}(\pi_{B}))$.

\begin{Proposition}
Let $\pi_{B}\colon B\rightarrow M$ be a fibre bundle, and let $X$ be a projectable vector field on~$B$. Then the $\pf^{k}(X)$ are a projectable family, in the sense that
\begin{gather*}
{\rm d}\pi_{B}^{l,k}\circ \pf^{k}(X) = \pf^{l}(X)\circ\pi_{B}^{l,k}
\end{gather*}
on $\Gamma_{k}(\pi_{B})$ for all $l\leq k$.
\end{Proposition}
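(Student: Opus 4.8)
The plan is to prove the identity $\dd\pi_{B}^{l,k}\circ\pf^{k}(X) = \pf^{l}(X)\circ\pi_{B}^{l,k}$ by working directly from the flow-theoretic definition of the prolongation rather than from the coordinate formula \eqref{prolongation}, since the flow definition is manifestly natural with respect to the projections $\pi_{B}^{l,k}$. First I would fix $[\sigma]_{x}^{k}\in\Gamma_{k}(\pi_{B})$ and recall that, by definition,
\begin{gather*}
\pf^{k}(X)\big(x,[\sigma]^{k}_{x}\big)=\frac{\dd}{\dd t}\bigg|_{t=0}\Big(\Fl^{(\pi_{B})_{*}X}_{t,0}(x),\big[\Fl^{X}_{t,0}\circ\sigma\circ\Fl^{(\pi_{B})_{*}X}_{0,t}\big]^{k}_{\Fl^{(\pi_{B})_{*}X}_{t,0}(x)}\Big),
\end{gather*}
so that $t\mapsto\big(\Fl^{(\pi_{B})_{*}X}_{t,0}(x),[\Fl^{X}_{t,0}\circ\sigma\circ\Fl^{(\pi_{B})_{*}X}_{0,t}]^{k}_{\Fl^{(\pi_{B})_{*}X}_{t,0}(x)}\big)$ is a smooth curve in $\Gamma_{k}(\pi_{B})$ through $(x,[\sigma]_{x}^{k})$ whose velocity at $t=0$ is $\pf^{k}(X)(x,[\sigma]_{x}^{k})$. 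The key observation is that the projection $\pi_{B}^{l,k}$ carries the $k$-jet $[\tau]_{y}^{k}$ of a local section $\tau$ to its $l$-jet $[\tau]_{y}^{l}$; applying this to the curve above, with $\tau_{t}=\Fl^{X}_{t,0}\circ\sigma\circ\Fl^{(\pi_{B})_{*}X}_{0,t}$ and $y=\Fl^{(\pi_{B})_{*}X}_{t,0}(x)$, shows that $\pi_{B}^{l,k}$ sends our curve to the curve
\begin{gather*}
t\mapsto\Big(\Fl^{(\pi_{B})_{*}X}_{t,0}(x),\big[\Fl^{X}_{t,0}\circ\sigma\circ\Fl^{(\pi_{B})_{*}X}_{0,t}\big]^{l}_{\Fl^{(\pi_{B})_{*}X}_{t,0}(x)}\Big),
\end{gather*}
which is precisely the defining curve for $\pf^{l}(X)$ evaluated at $\pi_{B}^{l,k}([\sigma]_{x}^{k})=[\sigma]_{x}^{l}$.

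The remainder is then a routine application of the chain rule: since $\pi_{B}^{l,k}$ is smooth, its differential sends the velocity at $t=0$ of a smooth curve to the velocity at $t=0$ of the image curve, i.e.
\begin{gather*}
\dd\pi_{B}^{l,k}\Big(\pf^{k}(X)\big(x,[\sigma]^{k}_{x}\big)\Big)=\frac{\dd}{\dd t}\bigg|_{t=0}\pi_{B}^{l,k}\Big(\Fl^{(\pi_{B})_{*}X}_{t,0}(x),\big[\tau_{t}\big]^{k}_{\Fl^{(\pi_{B})_{*}X}_{t,0}(x)}\Big)=\pf^{l}(X)\big(x,[\sigma]^{l}_{x}\big),
\end{gather*}
and since $[\sigma]_{x}^{l}=\pi_{B}^{l,k}([\sigma]_{x}^{k})$ this is exactly the asserted identity $\dd\pi_{B}^{l,k}\circ\pf^{k}(X)=\pf^{l}(X)\circ\pi_{B}^{l,k}$ at the arbitrary point $[\sigma]_{x}^{k}$. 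The time-dependent case is handled identically, carrying the extra time parameter through unchanged; as noted, the argument is modelled on (and the reference points to) the proof of Proposition~\ref{germprol} for germinal prolongations, where the same flow-functoriality reasoning applies to the projection from germs to $k$-jets.

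The only genuinely substantive point — and the one I would expect to need a sentence of care — is the claim that $t\mapsto\big(\Fl^{(\pi_{B})_{*}X}_{t,0}(x),[\tau_{t}]^{k}_{\Fl^{(\pi_{B})_{*}X}_{t,0}(x)}\big)$ really is a smooth curve into the manifold $\Gamma_{k}(\pi_{B})$, so that ``differentiate the curve, then project'' equals ``project, then differentiate'' is legitimate; this is where one uses that $X$ is projectable, so that $\Fl^{X}_{t,0}$ covers $\Fl^{(\pi_{B})_{*}X}_{t,0}$ via \eqref{projectableeqn} and hence $\tau_{t}$ is genuinely a local section of $\pi_{B}$ defined near $\Fl^{(\pi_{B})_{*}X}_{t,0}(x)$, with smooth dependence on $t$ in the jet coordinates. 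Once smoothness of the curve is granted, everything else is formal naturality of $\dd\pi_{B}^{l,k}$, and no appeal to the explicit formula \eqref{prolongation} (with its total-derivative bookkeeping) is needed. It suffices to treat the case $l=k-1$ and $k<\infty$ and then compose, and the statement for $\Gamma_{\infty}(\pi_{B})$ follows since its smooth structure is the projective limit one.
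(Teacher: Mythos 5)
Your proof is correct and is essentially the argument the paper intends: the paper gives no separate proof of this Proposition but states that the argument of Proposition~\ref{germprol} applies, and that argument is exactly your flow-naturality computation (apply the smooth projection to the defining curve of the prolongation, observe it becomes the defining curve of the lower-order prolongation at the projected point, and use the chain rule), with the $k=\infty$ case handled via the projective limit just as you do. Your additional remarks on smoothness of the curve and on projectability guaranteeing that the transported sections are genuine local sections are sensible elaborations of the same approach rather than a departure from it.
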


Projectability of the $\pf^{k}(X)$ guarantees (cf.~\cite[equation~(1.11)]{varbi}) that pointwise they admit a~pro\-jective limit, defining a section $\pf^{\infty}(X)$ of the projective limit tangent bundle of $\Gamma_{\infty}(\pi_{B})$ (cf.~\cite[Chapter One, Section B]{varbi}) for which the following holds.

\begin{Proposition}\label{towerfield1}
Let $\pi_{B}\colon B\rightarrow M$ be a fibre bundle. Then for each $l\leq k\leq\infty$ there is a~bracket-preserving homomorphism $\big(\pi_{B}^{l,k}\big)_{*}\colon \XF_{\proj}(\Gamma_{k}(\pi_{B}))\rightarrow\XF_{\proj}(\Gamma_{l}(\pi_{B}))$ and the diagram
\begin{center}
\begin{tikzcd}[column sep = huge]
& \XF_{\proj}(\Gamma_{\infty}(\pi_{B})) \ar[d]
\\
& \vdots \ar[d,"\left(\pi_{B}^{1,2}\right)_{*}"]
\\
& \XF_{\proj}(\Gamma_{1}(\pi_{B})) \ar[d,"\left(\pi_{B}^{0,1}\right)_{*}"]
\\
\XF_{\proj}(B) \ar[uuur,"\pf^{\infty}"] \ar[ur,"\pf^{1}"] \ar[r,"\id"] & \XF_{\proj}(B)
\end{tikzcd}
\end{center}
commutes.
\end{Proposition}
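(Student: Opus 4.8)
The plan is to realise each $\big(\pi_{B}^{l,k}\big)_{*}$ as the genuine pushforward of vector fields along $\pi_{B}^{l,k}$ by invoking the preceding proposition, and to reduce bracket-preservation to the classical fact that jet prolongation intertwines Lie brackets. First I would note that each $\pf^{k}$ ($1\leq k\leq\infty$) is injective: the preceding proposition applied with $l=0$ — where $\Gamma_{0}(\pi_{B})=B$ and $\pf^{0}=\id$ — gives ${\rm d}\pi_{B}^{0,k}\circ\pf^{k}(X)=X\circ\pi_{B}^{0,k}$, so surjectivity of $\pi_{B}^{0,k}$ recovers $X$ from $\pf^{k}(X)$, and for $k=\infty$ this is immediate since $\pf^{\infty}(X)$ is the projective limit of the $\pf^{k}(X)$ and its component over $B$ is $X$. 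Since $\XF_{\proj}(\Gamma_{k}(\pi_{B}))=\im\pf^{k}$ by definition, $\pf^{k}\colon\XF_{\proj}(B)\to\XF_{\proj}(\Gamma_{k}(\pi_{B}))$ is a bijection, and I set $\big(\pi_{B}^{l,k}\big)_{*}:=\pf^{l}\circ\big(\pf^{k}\big)^{-1}$ for $l\leq k\leq\infty$. With this definition $\big(\pi_{B}^{l,k}\big)_{*}\circ\pf^{k}=\pf^{l}$, $\big(\pi_{B}^{k,k}\big)_{*}=\id$ and $\big(\pi_{B}^{m,l}\big)_{*}\circ\big(\pi_{B}^{l,k}\big)_{*}=\big(\pi_{B}^{m,k}\big)_{*}$ for $m\leq l\leq k$, so the displayed diagram commutes. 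Moreover $\big(\pi_{B}^{l,k}\big)_{*}$ is the honest pushforward of vector fields: the preceding proposition says precisely that $\pf^{l}(X)$ is $\pi_{B}^{l,k}$-related to $\pf^{k}(X)$, which — $\pi_{B}^{l,k}$ being surjective — is the defining property of $\big(\pi_{B}^{l,k}\big)_{*}\big(\pf^{k}(X)\big)$.

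$\RB$-linearity of $\big(\pi_{B}^{l,k}\big)_{*}$ is clear since $\pf^{k}$ is $\RB$-linear by \eqref{prolongation}. For the Lie bracket the substantive point is that $\XF_{\proj}(\Gamma_{k}(\pi_{B}))$ is closed under the bracket of vector fields, with
\begin{gather*}
\big[\pf^{k}(X),\pf^{k}(Y)\big]=\pf^{k}\big([X,Y]\big),\qquad X,Y\in\XF_{\proj}(B);
\end{gather*}
note that $[X,Y]\in\XF_{\proj}(B)$ because $\pi_{B}$-related fields have $\pi_{B}$-related brackets. Granting this prolongation identity, bracket-preservation of $\big(\pi_{B}^{l,k}\big)_{*}$ is then formal:
\begin{gather*}
\big(\pi_{B}^{l,k}\big)_{*}\big[\pf^{k}(X),\pf^{k}(Y)\big]=\pf^{l}\big([X,Y]\big)=\big[\pf^{l}(X),\pf^{l}(Y)\big]=\big[\big(\pi_{B}^{l,k}\big)_{*}\pf^{k}(X),\big(\pi_{B}^{l,k}\big)_{*}\pf^{k}(Y)\big].
\end{gather*}

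To establish the prolongation identity I would argue through flows. The defining formula for $\pf^{k}$ identifies $\Fl^{\pf^{k}(X)}_{t,0}$ with the $k$-jet prolongation $\big(\Fl^{X}_{t,0}\big)^{(k)}$ of the projectable diffeomorphism $\Fl^{X}_{t,0}$, where the prolongation of a projectable diffeomorphism $\phi$ of $B$ covering $\phi_{M}$ is $\phi^{(k)}\colon[\sigma]^{k}_{x}\mapsto[\phi\circ\sigma\circ\phi_{M}^{-1}]^{k}_{\phi_{M}(x)}$, and this prolongation is functorial, $(\phi\circ\psi)^{(k)}=\phi^{(k)}\circ\psi^{(k)}$. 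Hence $\pf^{k}(\phi_{*}Y)=\big(\phi^{(k)}\big)_{*}\pf^{k}(Y)$ for any such $\phi$, as both sides generate the prolongation of the conjugated flow $t\mapsto\phi\circ\Fl^{Y}_{t,0}\circ\phi^{-1}$; and since $\pf^{k}$, being built from fixed differential operators applied to the coefficients of a field in \eqref{prolongation}, commutes with differentiation of smooth one-parameter families of projectable fields,
\begin{gather*}
\big[\pf^{k}(X),\pf^{k}(Y)\big]=\frac{\rm d}{{\rm d}t}\bigg|_{t=0}\big(\Fl^{\pf^{k}(X)}_{-t,0}\big)_{*}\pf^{k}(Y)=\frac{\rm d}{{\rm d}t}\bigg|_{t=0}\pf^{k}\Big(\big(\Fl^{X}_{-t,0}\big)_{*}Y\Big)=\pf^{k}\big([X,Y]\big).
\end{gather*}
(Equivalently, this is the classical fact that $\pf^{k}$ is a homomorphism of Lie algebras, also readable off \eqref{prolongation}.) For $k=\infty$ one equips $\XF_{\proj}(\Gamma_{\infty}(\pi_{B}))$ with the projective-limit bracket — equivalently, transports that of $\XF_{\proj}(B)$ along the injection $\pf^{\infty}$ — and the whole picture passes to the limit since the $\big(\pi_{B}^{l,k}\big)_{*}$ with $k<\infty$ are mutually compatible bracket-preserving maps. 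I expect the bracket identity $\big[\pf^{k}(X),\pf^{k}(Y)\big]=\pf^{k}\big([X,Y]\big)$ to be the main obstacle; commutativity of the diagram is purely formal once $\pf^{k}$ is known injective, and the $k=\infty$ case needs only routine care with the projective-limit formalism.
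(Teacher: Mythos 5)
The paper states this proposition without an explicit proof, presenting it as a consequence of the preceding proposition (that the $\pf^{k}(X)$ form a $\pi_{B}^{l,k}$-related family) together with the classical theory of prolongations, and your argument is correct and supplies exactly the details left implicit there: injectivity of $\pf^{k}$ (which the paper instead reads off equation~\eqref{prolongation}), the identification of $\big(\pi_{B}^{l,k}\big)_{*}$ with the honest pushforward via relatedness and surjectivity, the classical identity $\big[\pf^{k}(X),\pf^{k}(Y)\big]=\pf^{k}([X,Y])$, and the projective-limit treatment of $k=\infty$. This is essentially the intended approach, so no further comparison is needed.
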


\subsection{Diffeology}

We recall in this subsection some basic objects of study in diffeology that will be relevant for our constructions. The~most comprehensive reference on diffeology is the wonderful book~\cite{diffeology} by P. Iglesias-Zemmour.

\begin{Definition}\label{diffeology}
A function $\varphi\colon U\rightarrow \mathcal{X}$ from an open subset $U$ of some finite-dimensional Euclidean space to a set $\XX$ is called a \textit{parametrisation}. A~\textit{diffeology} on a set $\XX$ is a family D of~para\-metrisations satisfying the following axioms.
\begin{enumerate}\itemsep=0pt
\item The family D contains all constant parametrisations.
\item If $\varphi\colon U\rightarrow \XX$ is a parametrisation such that every point $u\in U$ has an open neighbourhood $V\subset U$ for which $\varphi|_{V}$ is an element of D, then $\varphi$ itself is an element of D.
\item For every element $\varphi\colon U\rightarrow \XX$ of D, every open set $V$ of any finite-dimensional Euclidean space, and for every smooth function $f\colon V\rightarrow U$, the composite $\varphi\circ f\colon V\rightarrow \XX$ is contained in D.
\end{enumerate}
A set with a diffeology is called a \textit{diffeological space}, and the elements of the diffeology are called its \textit{plots}. If $\XX$ and $\YY$ are two diffeological spaces, then a function $f\colon \XX\rightarrow \YY$ is said to be \textit{smooth} if for every plot $\varphi\colon U\rightarrow \XX$ of $\XX$, the composite $f\circ\varphi\colon U\rightarrow \YY$ is a plot of $\YY$. A~smooth bijection of diffeological spaces is said to be a \textit{diffeomorphism} if it is smooth with smooth inverse.
\end{Definition}

Every manifold is a diffeological space, with diffeology constituted by the set of all parametrisations that are smooth in the usual sense. Moreover a map between manifolds is smooth in the manifold sense if and only if it is smooth in the diffeological sense. Thus the category of~mani\-folds and smooth maps is a full and faithful subcategory of the category of diffeological spaces and smooth maps.

Diffeologies can be pushed forward and pulled back by functions of sets. This fact will be invoked frequently for our constructions.

\begin{Definition}
Let $\XX$ and $\YY$ be sets, and let $f\colon \XX\rightarrow \YY$ be a function.
\begin{enumerate}\itemsep=0pt
\item If $\XX$ has a diffeology, then the \textit{pushforward diffeology induced by $f$} is defined by declaring a parametrisation $\varphi\colon U\rightarrow \YY$ to be a plot if and only if every $u\in U$ has an open neighbourhood $V\subset U$ such that either $\varphi|_{V}$ is constant, or equal to the composite $f\circ\psi$ for some plot $\psi\colon V\rightarrow \XX$ of $\XX$. The~map $f$ is said to be a \textit{subduction} if it is surjective and if $\YY$ is equipped with the pushforward diffeology induced by $f$.
\item If $\YY$ has a diffeology, then the \textit{pullback diffeology induced by $f$} is defined by declaring a~parametrisation $\psi\colon U\rightarrow \XX$ to be plot if and only if the composite $f\circ\psi\colon U\rightarrow \YY$ is a~plot of~$\YY$. The~map $f$ is said to be an \textit{induction} if it is injective and if $\XX$ is equipped with the pullback diffeology from $\YY$.
\end{enumerate}
The following special cases are of particular importance. Let $\XX$ be a diffeological space.
\begin{enumerate}\itemsep=0pt
\item If $\sim$ is any equivalence relation on $\XX$, then the \textit{quotient diffeology} $\XX/{\sim}$ is the pushfoward diffeology arising from the quotient $\XX\rightarrow \XX/{\sim}$.
\item If $S$ is any subset of $\XX$, then the \textit{subspace diffeology} on $S$ is the pullback diffeology arising from the inclusion $S\hookrightarrow \XX$.
\item If $\YY$ is any other diffeological space, then the \textit{product diffeology} on $\XX\times \YY$ is the smallest diffeology for which the projections onto the factors are subductions.
\end{enumerate}
Quotients, subspaces and products will always be assumed to be equipped with the respective diffeologies defined above unless otherwise stated.
\end{Definition}

One of the features of the category of diffeological spaces is that the set of all morphisms between any two objects in the category is itself an object.

\begin{Definition}
Let $\XX$ and $\YY$ be diffeological spaces, and denote by $C^{\infty}(\XX,\YY)$ the set of~all smooth maps $f\colon \XX\rightarrow \YY$. The~\textit{functional diffeology} on $C^{\infty}(\XX,\YY)$ is defined by declaring a~para\-metrisation $\tilde{f}\colon U\rightarrow C^{\infty}(\XX,\YY)$ to be a plot if and only if the associated map $U\times\XX\ni(u,x)\mapsto\tilde{f}(u)(x)\rightarrow\YY$ is smooth.
\end{Definition}

The familiar notion of a fibre bundle over a manifold has a far reaching generalisation to diffeological spaces. It is the flexibility afforded by this generalisation that permits most of the constructions in this paper.

\begin{Definition}%\label{pb}
A \textit{diffeological pseudo-bundle} is a subduction $\pi_{\BB}\colon \BB\rightarrow \XX$ of diffeological spaces. Such a pseudo-bundle is in particular called a \textit{diffeological vector pseudo-bundle} if each fibre of $\pi_{\BB}$ is a vector space for which the vector space operations are smooth with respect to the subspace diffeology, and such that the zero section is smooth.
\end{Definition}

A diffeological pseudo-bundle need not have fibres that are all diffeomorphic, and of course need not be locally trivial in any sense (see for instance Example~\ref{germfol}). An important subclass of~diffeological pseudo-bundles are \textit{diffeological bundles}, which have mutually diffeomorphic fibres and which are locally trivial under pullbacks by plots. Diffeological bundles are distinguished by the behaviour of their \textit{structure groupoids}. Before we give the definition of this object, let us record what we mean by diffeological categories and groupoids.

\begin{Definition}
Let $\CC$ be a small category, with object set identified as a subset of the morphism set via the map which sends each object to its associated identity morphism. We~say that~$\CC$ is a \textit{diffeological category} if its set of morphisms is equipped with a diffeology for which the range, source, and composition are all smooth. If $\CC$ is in addition a groupoid, whose inversion map is smooth, we call $\CC$ a \textit{diffeological groupoid}.
\end{Definition}

Let now $\pi_{\BB}\colon \BB\rightarrow \XX$ be a smooth surjection of diffeological spaces. Denote by $\Aut(\pi_{\BB})$ the groupoid with object set $\XX$, and with morphisms from $x$ to $y$ constituted by the set $\Diff(\BB_{x},\BB_{y})$ of all diffeomorphisms from the fibre $\BB_{x}$ over $x$ to the fibre $\BB_{y}$ over $y$, with the obvious range, source, inversion and composition. The~groupoid $\Aut(\pi_{\BB})$ admits a smallest diffeology, called the \textit{functional diffeology}, under which the evaluation map $\ev\colon \Aut(\pi_{\BB})\times_{s,\pi_{\BB}}\BB\ni(f,b)\mapsto f(b)\in \BB$ is smooth, and under which $\Aut(\pi_{\BB})$ is a diffeological groupoid (see~\cite[Section~8.7]{diffeology} for details).

\begin{Definition}\label{structuregpd}
 Let $\pi_{\BB}\colon \BB\rightarrow \XX$ be a surjection of diffeological spaces. Equipped with the functional diffeology, we refer to $\Aut(\pi_{\BB})$ as the \textit{structure groupoid} of the surjection $\pi_{\BB}$. We~say that $\pi_{\BB}$ is a \textit{diffeological bundle} if the characteristic map $(r,s)\colon \Aut(\pi_{\BB})\rightarrow \XX\times \XX$ is a sub\-duction.
\end{Definition}

Diffeological bundles, unlike general diffeological pseudo-bundles, have a typical fibre to which all other fibres are diffeomorphic, and the pullback of a diffeological bundle along any plot is locally trivial~\cite[p.~240]{diffeology}.

By definition, singular foliations arise from certain families of sections of tangent bundles. To~use diffeology to study singular foliations, therefore, we need a notion of tangent bundle for a diffeological space. A~number of definitions have been proposed for this purpose, which, while coincident for manifolds, do not coincide for general diffeological spaces (see~\cite{cw} for a detailed discussion). The~point of view that we find useful here, as in~\cite{mac3}, is that of \textit{internal} tangent spaces and bundles. The~paper~\cite{cw} provides a categorical definition of internal tangent spaces based on work of Hector~\cite{hector}, which may be summarised as follows.

\begin{Definition}\label{tangentbundle}
Let $\XX$ be a diffeological space and let $x\in \XX$. Denote by $p_{x}$ the set of all \textit{plots centered at $x$}, that is, plots $\varphi\colon U\rightarrow \XX$ such that $0\in U$ and $\varphi(0) = x$. Denote by $T_{0}\dom(\varphi)$ the tangent space at zero of the domain of any such plot, and by $\vec{v}_{\varphi}$ the image of $\vec{v}\in T_{0}\dom(\varphi)$ in the direct sum $\bigoplus_{\varphi\in p_{x}}T_{0}\dom(\varphi)$. The~\textit{internal tangent space of $\XX$ at $x$} is the quotient space~$T_{x}\XX$ of the direct sum
\begin{gather*}
\bigoplus_{\varphi\in p_{x}} T_{0}\dom(\varphi)
\end{gather*}
by the subspace generated by all vectors of the form $f_{*}(\vec{v})_{\varphi} - \vec{v}_{\varphi\circ f}$, where $\varphi\in p_{x}$, $f\colon \dom(f)\rightarrow\dom(\varphi)$ is any smooth map of open Euclidean domains with $0\in\dom(f)$ and $f(0) = 0$, and with $\vec{v}\in T_{0}\dom(f)$. The~class of an element $\vec{v}_{\varphi}$, $\vec{v}\in T_{0}\dom(\varphi)$, will be denoted $\varphi_{*}(\vec{v})$. By~\cite[Proposition~3.3]{cw}, elements of $T_{x}\XX$ can always be written as linear combinations of $\varphi_{*}({\rm d}/{\rm d}t)$ for 1-plots $\varphi\colon (-\epsilon,\epsilon)\rightarrow\XX$, which we will usually write as ${\rm d}/{\rm d}t|_{0}\varphi_{t}$.
\end{Definition}

Let $\XX$ be a diffeological space, and consider the set $T\XX:=\bigsqcup_{x\in \XX}T_{x}\XX$. For any plot \mbox{$\varphi\colon U\rightarrow \XX$} and for any $u\in U$, denote by $\tau_{u}\colon v\mapsto v+u$ the $u$-translation map, so that $\tau_{u}^{-1}(U)$ contains $0$ and $\varphi\circ\tau_{u}\colon \tau_{u}^{-1}(U)\rightarrow \XX$ is a plot centered at $\varphi(u)$. Define ${\rm d}\varphi\colon TU\rightarrow T\XX$ by the formula
\begin{gather*}
{\rm d}\varphi(u,\vec{v}):=(\varphi(u),(\varphi\circ\tau_{u})_{*}(\vec{v})),\qquad (u,\vec{v})\in TU.
\end{gather*}
These maps were first considered by Hector~\cite{hector}. Then there exists a smallest diffeology on $T\XX$, called the \textit{dvs diffeology}~\cite{cw}, for which the natural projection $\pi_{T\XX}\colon T\XX\rightarrow \XX$ is a diffeological vector pseudo-bundle, and which contains the parametrisations ${\rm d}\varphi\colon TU\rightarrow T\XX$ as plots.

\begin{Definition}
Let $\XX$ be a diffeological space. The~diffeological vector pseudo-bundle $\pi_{T\XX}$: $T\XX\rightarrow \XX$ is called the \textit{internal tangent bundle of $\XX$}.
\end{Definition}

The internal tangent bundle is functorial under smooth maps of diffeological spaces.

\begin{Definition}\label{differential}
Let $f\colon \XX\rightarrow \YY$ be a smooth map of diffeological spaces. For any plot $\varphi$ of $\XX$ centered at $x$, define
\begin{gather*}
{\rm d}f(x,\varphi_{*}(\vec{v})):=(f(x),(f\circ\varphi)_{*}(\vec{v})),\qquad x\in \XX
\end{gather*}
and extend by linearity to a map ${\rm d}f\colon T\XX\rightarrow T\YY$. Then ${\rm d}f$ is a smooth map~\cite[Proposition~4.8]{cw} called the \textit{pushforward} or \textit{differential} of $f$.
\end{Definition}

Finally, we recall that every diffeological space admits a natural topology with respect to which all plots are continuous.

\begin{Definition}\label{Dtop}
Let $\XX$ be a diffeological space. The~\textit{D-topology} on $\XX$ is the topology whose open sets are precisely those sets $A\subset \XX$ for which $\varphi^{-1}(A)$ is open for all plots $\varphi$ of $\XX$.
\end{Definition}

For manifolds, the D-topology coincides with the usual topology. Although the D-topology will not play a central role in any of our constructions, we will see in Section~\ref{relsheaf} that it gives the \'{e}tale space of any sheaf of sections of a fibre bundle a natural topology which is \textit{distinct} from the usual \'{e}tale topology.

\section{Diffeological constructions}\label{sec3}

\subsection{Pseudo-bundles of germs}\label{pseudobundlesofgerms}

In~\cite{mac3}, we introduced ``bundles of germs'' of sections of certain fibre bundles. This construction can be generalised easily as follows. Let $\pi_{B}\colon B\rightarrow M$ be a smooth fibre bundle over a smooth manifold $M$, and let $\SS$ be a sheaf of smooth sections of $\pi_{B}$. Assume that $\SS$ is \textit{locally nonempty}, in the sense that for each $x\in M$, we can find an open neighbourhood $\mathcal{O}$ of $x$ such that $\SS(\mathcal{O})$ is nonempty. Define the \textit{total space} $\SS_{\loc}$ of the sheaf $\SS$ as the union
\begin{gather*}
\SS_{\loc}:=\bigcup_{\mathcal{O}}\SS(\mathcal{O})
\end{gather*}
over all open sets $\mathcal{O}$ in $M$ of elements of $\SS(\mathcal{O})$. Thus $\SS_{\loc}$ is the set of all locally defined sections of $\pi_{B}$ that belong to some $\SS(\mathcal{O})$. The~arguments of~\cite[Section~1.63]{diffeology} can be used to show that a diffeology may be defined on $\SS_{\loc}$ as follows.

\begin{Proposition}\label{functprop}
Let $M$ be a manifold, and let $\SS$ be a sheaf of smooth sections of some fibre bundle $B$ over $M$. Declare a parametrisation $\tilde{\sigma}\colon U\rightarrow \SS_{\loc}$ to have the property \textit{funct} if for all $u_{0}\in U$ and $x_{0}\in\dom(\tilde{\sigma}(u_{0}))$, there exists an open neighbourhood $V\subset U$ of $u_{0}$ and an open neighbourhood $\OO\subset \dom(\tilde{\sigma}(u_{0}))$ of $x_{0}$ for which $\OO\subset\dom(\tilde{\sigma}(u))$ for all $u\in V$ and for which the map $V\times\OO\ni(u,x)\mapsto\tilde{\sigma}(u)(x)\in B$ is smooth. Then the collection of parametrisations with the property \textit{funct} defines a diffeology on $\SS_{\loc}$.
\end{Proposition}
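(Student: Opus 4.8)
The plan is to verify the three axioms of Definition~\ref{diffeology} directly for the collection of parametrisations with property \textit{funct}, imitating the argument of~\cite[Section~1.63]{diffeology}. The first axiom is immediate: if $\tilde{\sigma}\colon U\rightarrow\SS_{\loc}$ is constant with value some fixed section $\sigma_{0}\in\SS(\OO_{0})$, then for any $u_{0}\in U$ and any $x_{0}\in\OO_{0}=\dom(\tilde\sigma(u_0))$ we may take $V=U$ and $\OO=\OO_{0}$, and the map $(u,x)\mapsto\tilde\sigma(u)(x)=\sigma_{0}(x)$ is smooth since it is the composition of the projection $V\times\OO\to\OO$ with the smooth section $\sigma_{0}$. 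Hence every constant parametrisation has property \textit{funct}.

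For the locality axiom (axiom~2), suppose $\tilde\sigma\colon U\to\SS_{\loc}$ is a parametrisation such that each $u\in U$ has an open neighbourhood $W_u\subset U$ on which $\tilde\sigma|_{W_u}$ has property \textit{funct}. Given $u_0\in U$ and $x_0\in\dom(\tilde\sigma(u_0))$, apply the property \textit{funct} hypothesis to $\tilde\sigma|_{W_{u_0}}$ at the point $u_0$ (which lies in $W_{u_0}$) and the point $x_0$: this produces an open neighbourhood $V\subset W_{u_0}\subset U$ of $u_0$ and an open neighbourhood $\OO\subset\dom(\tilde\sigma(u_0))$ of $x_0$ with $\OO\subset\dom(\tilde\sigma(u))$ for all $u\in V$ and $(u,x)\mapsto\tilde\sigma(u)(x)$ smooth on $V\times\OO$. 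These are exactly the data required to witness property \textit{funct} for $\tilde\sigma$ at $(u_0,x_0)$, so $\tilde\sigma$ itself has property \textit{funct}. Note this step uses nothing about sections — it is a purely formal consequence of the definition being itself local in the source variable.

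The smooth-compatibility axiom (axiom~3) is the main point, though still routine. Let $\tilde\sigma\colon U\to\SS_{\loc}$ have property \textit{funct}, let $V'$ be an open subset of a Euclidean space, and let $h\colon V'\to U$ be smooth; we must show $\tilde\sigma\circ h$ has property \textit{funct}. Fix $v_0\in V'$ and $x_0\in\dom((\tilde\sigma\circ h)(v_0))=\dom(\tilde\sigma(h(v_0)))$. Apply property \textit{funct} for $\tilde\sigma$ at the point $u_0:=h(v_0)$ and the point $x_0$, obtaining an open neighbourhood $V\subset U$ of $u_0$ and an open neighbourhood $\OO\subset\dom(\tilde\sigma(u_0))$ of $x_0$ as above. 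By continuity of $h$, the set $V'':=h^{-1}(V)$ is an open neighbourhood of $v_0$ in $V'$; for every $v\in V''$ we have $h(v)\in V$, hence $\OO\subset\dom(\tilde\sigma(h(v)))=\dom((\tilde\sigma\circ h)(v))$. Finally, the map $V''\times\OO\ni(v,x)\mapsto(\tilde\sigma\circ h)(v)(x)=\tilde\sigma(h(v))(x)$ equals the composition of the smooth map $h\times\id_{\OO}\colon V''\times\OO\to V\times\OO$ with the smooth map $V\times\OO\ni(u,x)\mapsto\tilde\sigma(u)(x)\in B$, and a composition of smooth maps of manifolds is smooth. Thus $\tilde\sigma\circ h$ has property \textit{funct}, completing the verification.

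The only subtlety worth flagging — and the reason the statement is not entirely trivial — is bookkeeping with domains: elements of $\SS_{\loc}$ are sections with varying domains, so one must check at each stage that the chosen neighbourhood $\OO$ of $x_0$ actually lies in the domain of $\tilde\sigma(u)$ (resp.\ $(\tilde\sigma\circ h)(v)$) for \emph{all} nearby parameter values, not merely at $u_0$; but this is built into the formulation of property \textit{funct} and survives shrinking and pullback as shown above. No sheaf axioms beyond the standing local-nonemptiness assumption, and no manifold structure on $\SS_{\loc}$, are needed — the diffeology is engineered precisely so that the three axioms reduce to elementary facts about smooth maps between finite-dimensional manifolds.
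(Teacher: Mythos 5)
Your proof is correct and follows essentially the same route as the paper's: verify the three axioms directly, with the constant case handled by taking $V=U$ and $\OO$ the domain of the constant value, locality being formal, and smooth compatibility handled by pulling back the neighbourhood $V$ along the smooth map and composing. The only difference is that you spell out the smoothness of the composites slightly more explicitly than the paper does, which is harmless.
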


\begin{proof}
Observe first that if $\tilde{\sigma}\colon U\ni u\mapsto \sigma\in \SS(\mathcal{O}')$ is a constant plot, then we can simply take $\OO:=\OO'$ and $V = U$ to see that $\tilde{\sigma}$ has property \textit{funct}. Therefore axiom 1 of Definition~\ref{diffeology} is satisfied. To~see that axiom 2 is satisfied, suppose that $\tilde{\sigma}\colon U\rightarrow\SS_{\loc}$ is a parametrisation such that for each $u_{0}\in U$, there exists an open neighbourhood $V$ of $u_{0}$ such that $\tilde{\sigma}|_{V}$ has property \textit{funct}. Then by definition $\tilde{\sigma}$ must itself have property \textit{funct}, so that axiom 2 is satisfied. Finally, to prove that axiom 3 is satisfied, suppose that $\tilde{\sigma}\colon U\rightarrow\SS_{\loc}$ has property \textit{funct} and that $U'$ is some open Euclidean domain with $\varphi\colon U'\rightarrow U$ a smooth function. Let $u_{0}'\in U'$ and denote $u_{0}:=\varphi(u_{0}')\in U$. Let $V$ be an open neighbourhood of $u_{0}$ in $U$, and let $\OO$ be an open subset of $\dom(\tilde{\sigma}(u_{0}))$ for which $\OO\subset\dom(\tilde{\sigma}(u))$ for all $u\in V$ and for which $V\times\OO\ni(u,x)\mapsto\tilde{\sigma}(u)(x)\in B$ is smooth. Then $V':=\varphi^{-1}(V)$ is an open neighbourhood of $u_{0}'$ in $U'$, the open set $\OO$ satisfies $\OO\subset\dom(\tilde{\sigma}\circ\varphi(u))$ for all $u\in U$ and $V'\times\OO\ni(u',x)\mapsto\tilde{\sigma}(\varphi(u'))(x)\in B$ is smooth. Therefore axiom 3 is satisfied also.
\end{proof}

\begin{Definition}\label{funl}
Let $M$ be a manifold, and $\SS$ a sheaf of smooth sections of some fibre bundle over $M$. Then the diffeology on $\SS_{\loc}$ given in Proposition~\ref{functprop} is called the \textit{functional diffeology} on $\SS_{\loc}$.
\end{Definition}

Let us now consider the diffeological subspace
\begin{gather*}
\Gamma_{\loc}(\SS):=\{(x,\sigma)\colon x\in\dom(\sigma)\}
\end{gather*}
of the diffeological product $M\times \SS_{\loc}$. There is then clearly a surjective, smooth map $\pi_{\Gamma_{\loc}(\SS)}$: $\Gamma_{\loc}(\SS)\rightarrow M$ defined by
\begin{gather*}
\pi_{\Gamma_{\loc}(\SS)}(x,\sigma):=x,\qquad (x,\sigma)\in\Gamma_{\loc}(\SS),
\end{gather*}
which is moreover a subduction. Indeed, for any plot $\tilde{x}\colon U\rightarrow M$ and for any $u\in U$ we can always find an open neighbourhood $V$ of $u$ in $U$ such that $\tilde{x}(V)$ is contained in some open neighbour\-hood~$\mathcal{O}$ of $\tilde{x}(u)$ for which $\SS(\mathcal{O})$ contains some element $\sigma$. Now defining $\rho\colon V\rightarrow \Gamma_{\loc}(\SS)$ by~simply
\begin{gather*}
\rho(v):=(\tilde{x}(v),\sigma),\qquad v\in V
\end{gather*}
we have that $\pi_{\Gamma_{\loc}(\SS)}\circ\rho = \tilde{x}$, making $\pi_{\Gamma_{\loc}(\SS)}$ a subduction as claimed. The~fibre $\Gamma_{\loc}(\SS)_{x}$ over any $x\in M$ is the nonempty space consisting of sections $\sigma$ of $\SS$ defined on some open neighbourhood of $x$, equipped with the functional diffeology of Definition~\ref{funl}. The~subduc\-tion~$\pi_{\Gamma_{\loc}(\SS)}$ is the first step on the way to defining a genuinely useful object. Our next example shows why $\pi_{\Gamma_{\loc}(\SS)}$ is too large to be of much use in its own right.

\begin{Example}\label{ex1}
Let $\FF$ be a singular foliation of $M$, and denote the corresponding sheaf by the same symbol. Each fibre $\Gamma_{\loc}(\FF)_{x}$ is then \textit{almost} a Lie algebra. Indeed, if~$X\in\FF(\OO_{1})$ and \mbox{$Y\in\FF(\OO_{2})$} contain~$x$ in their domains of definition, then on the open neighbourhood $\OO:=\OO_{1}\cap \OO_{2}$ of~$x$, the Lie bracket $[X,Y]\in\FF(\OO)$ is defined. There is, however, nothing special about the choice $\OO:=\OO_{1}\cap\OO_{2}$, and indeed $[X,Y]$ also makes sense on any open neighbourhood~$\OO'$ of~$x$ within~$\OO$ and technically defines a distinct element $[X,Y]|_{\OO'}\in\FF(\OO')$. One encounters essentially the same problem when trying to define vector space operations in $\Gamma_{\loc}(\FF)_{x}$. To~rectify this sort of problem we work instead with a quotient of $\Gamma_{\loc}(\FF)$.
\end{Example}

Let us again return to a sheaf $\SS$ of sections of a bundle $\pi_{B}\colon B\rightarrow M$. Let us denote the germ at $x$ of any local section $\sigma$ of $\pi_{B}$ defined in an open neighbourhood of $x$ by $[\sigma]_{x}^{\g}$. We~define an equivalence relation~$\sim_{\g}$ on $\Gamma_{\loc}(\SS)$ by declaring $(x,\sigma)\sim_{\g}(y,\eta)$ if and only if $x = y$ and $[\sigma]^{\g}_{x} = [\eta]^{\g}_{x}$. We~denote by $\Gamma_{\g}(\SS)$ the diffeological quotient of $\Gamma(\SS)$ by the equivalence relation~$\sim_{\g}$, and denote by $\pi_{\SS}^{\g}\colon \Gamma_{\g}(\SS)\rightarrow M$ the obvious surjection
\begin{gather*}
\pi_{\SS}^{\g}\big(x,[\sigma]^{\g}_{x}\big):=x,\qquad \big(x,[\sigma]^{\g}_{x}\big)\in\Gamma_{\g}(\SS).
\end{gather*}
Since both the quotient map $q\colon \Gamma_{\loc}(\SS)\rightarrow\Gamma_{\g}(\SS)$ and the projection $\Gamma_{\loc}(\SS)\rightarrow M$ are subductions, so too is the projection $\pi_{\SS}^{\g}\colon \Gamma_{\g}(\SS)\rightarrow M$. Thus $\pi_{\SS}^{\g}\colon \Gamma_{\g}(\SS)\rightarrow M$ is a diffeological pseudo-bundle.

\begin{Definition}
Let $\SS$ be a sheaf of sections of a fibre bundle $\pi_{B}\colon B\rightarrow M$. Then the subduction $\pi_{\SS}^{\g}\colon \Gamma_{\g}(\SS)\rightarrow M$ is called the \textit{pseudo-bundle of germs of $\SS$}. If in particular $\SS$ is the sheaf of all sections of $\pi_{B}$, then we denote the pseudo-bundle of germs of $\SS$ by simply $\pi_{B}^{\g}\colon \Gamma_{\g}(\pi_{B})\rightarrow M$.
\end{Definition}

In fact the pseudo-bundle of germs of the full sheaf of sections of a fibre bundle $\pi_{B}\colon B\rightarrow M$ is a diffeological bundle, in the sense that all its fibres are isomorphic to the single diffeological space $C_{\g,0}^{\infty}\big(\RB^{\dim(M)};F\big)$ of germs at zero of smooth functions from $\RB^{\dim(M)}$ into the typical fibre~$F$ of~$B$. This can be seen, for instance, using an associated bundle construction in a similar fashion to~\cite[Remark 5.7]{mac3}~-- one need only replace the ``distinguished functions'' considered therein with coordinate maps on $M$. Thus it is entirely reasonable to refer to $\Gamma_{\g}(\pi_{B})$ as the \textit{bundle} of germs of sections of $\pi_{B}$. Let us now study its relationship with the jet bundles of $\pi_{B}$.

For each $k\leq\infty$ we have a canonical projection $\pi_{B}^{k,\g}\colon \Gamma_{\g}(\pi_{B})\rightarrow \Gamma_{k}(\pi_{B})$ onto the $k^{\rm th}$ order jet bundle of $\pi_{B}$ defined by
\begin{gather*}
\pi_{B}^{k,\g}\big(x,[\sigma]^{\g}_{x}\big):=\big(x,[\sigma]^{k}_{x}\big),\qquad \big(x,[\sigma]^{\g}_{x}\big)\in\Gamma_{\g}(\pi_{B}).
\end{gather*}
The arguments of~\cite[Proposition~5.14]{mac3} show that these projections are smooth, and are compatible with the jet projections $\pi_{B}^{l,k}\colon \Gamma_{k}(\pi_{B})\rightarrow \Gamma_{l}(\pi_{B})$ in the sense that $\pi_{B}^{l,\g} = \pi_{B}^{l,k}\circ\pi_{B}^{k,\g}$ for all $l\leq k$. We~therefore have a tower
\begin{center}
\begin{tikzcd}[row sep = large]
& & & \Gamma_{\g}(\pi_{B}) \ar[d,"\pi_{B}^{\infty,\g}"] & & &
\\
& & & \Gamma_{\infty}(\pi_{B}) \ar[dl,"\pi_{B}^{k+1,\infty}"] \ar[dr,"\pi_{B}^{k,\infty}"'] \ar[drrr,"\pi_{B}^{0,\infty}"'] & & &
\\
& \cdots\ar[r] & \Gamma_{k+1}(\pi_{B}) \ar[rr,"\pi_{B}^{k,k+1}"'] & & \Gamma_{k}(\pi_{B}) \ar[r] & \cdots \ar[r] & B
\end{tikzcd}
\end{center}
of diffeological bundles which extends the usual well-known tower of jet bundles of $\pi_{B}$.

Now there is not an easily identifiable Lie bracket on the space of vector fields (that is, sections of the internal tangent bundle) on $\Gamma_{\g}(\pi_{B})$, however there \textit{does} exist a certain diffeological subspace of vector fields which carries a natural Lie bracket. These vector fields are those that are contained in the image of a germinal prolongation operator from projectable vector fields on~$B$ to vector fields on $\Gamma_{\g}(\pi_{B})$.

\begin{Definition}
Let $\pi_{B}\colon B\rightarrow M$ be a fibre bundle. For (possibly time-dependent) $X\in\XF_{\proj}(B)$, the vector field $\pf^{\g}(X)$ on $\Gamma_{\g}(\pi_{B})$ defined by the formula
\begin{gather*}
\pf^{\g}(X)\big(x,[\sigma]^{\g}_{x}\big):=\frac{\rm d}{{\rm d}t}\bigg|_{t=0} \Big(\Fl^{(\pi_{B})_{*}X}_{t,0}(x),\big[\Fl^{X}_{t,0}\circ\sigma \circ\Fl^{(\pi_{B})_{*}(X)}_{0,t}\big]^{\g}_{\Fl^{(\pi_{B})_{*}(X)}_{t,0}(x)}\Big)
\end{gather*}
is called the \textit{germinal prolongation of $X$}. The~associated linear map $\pf^{\g}\colon \XF_{\proj}(B)\rightarrow \XF(\Gamma_{\g}(\pi_{B}))$ is called the \textit{germinal prolongation operator}, and its image is denoted $\XF_{\proj}(\Gamma_{\g}(\pi_{B}))$.
\end{Definition}

Our next result relates the germinal prolongation operator to the jet prolongations of projectable vector fields, and can be seen as a justification of the nomenclature ``germinal prolongation".

\begin{Proposition}\label{germprol}
Let $\pi_{B}\colon B\rightarrow M$ be a fibre bundle. Then for each $k\leq\infty$, we have
\begin{gather*}
{\rm d}\pi_{B}^{k,\g}\circ \pf^{\g}(X) = \pf^{k}(X)\circ\pi_{B}^{k,\g}
\end{gather*}
for all $($possibly time-dependent$)$ $X\in\XF_{\proj}(B)$. Consequently the tower of prolongations of~Pro\-po\-si\-tion~$\ref{towerfield1}$ completes to a tower
\begin{center}
\begin{tikzcd}[column sep = large, row sep = large]
& &\XF_{\proj}(\Gamma_{\g}(\pi_{B})) \ar[d, "(\pi_{B}^{\infty,\g})_{*}"]
\\
& & \XF_{\proj}(\Gamma_{\infty}(\pi_{B})) \ar[d]
\\
& & \vdots \ar[d,"\big(\pi_{B}^{0,1}\big)_{*}"]
\\
\XF_{\proj}(B) \ar[uuurr,"\pf^{\g}"] \ar[uurr,"\pf^{\infty}"] \ar[rr,"\id"] & & \XF_{\proj}(B)
\end{tikzcd}
\end{center}
\end{Proposition}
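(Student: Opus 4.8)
The plan is to verify the claimed intertwining identity $\mathrm{d}\pi_{B}^{k,\g}\circ\pf^{\g}(X)=\pf^{k}(X)\circ\pi_{B}^{k,\g}$ directly from the defining formulas, and then deduce the commutativity of the tower as a formal consequence. First I would evaluate both sides at an arbitrary germ $\big(x,[\sigma]^{\g}_{x}\big)\in\Gamma_{\g}(\pi_{B})$. The left-hand side is $\mathrm{d}\pi_{B}^{k,\g}$ applied to the tangent vector $\tfrac{\mathrm{d}}{\mathrm{d}t}\big|_{0}\big(\Fl^{(\pi_{B})_{*}X}_{t,0}(x),\big[\Fl^{X}_{t,0}\circ\sigma\circ\Fl^{(\pi_{B})_{*}X}_{0,t}\big]^{\g}_{\Fl^{(\pi_{B})_{*}X}_{t,0}(x)}\big)$, which by the definition of the differential of a smooth map of diffeological spaces (Definition~\ref{differential}) equals the derivative at $t=0$ of the curve $t\mapsto\pi_{B}^{k,\g}\big(\Fl^{(\pi_{B})_{*}X}_{t,0}(x),\big[\Fl^{X}_{t,0}\circ\sigma\circ\Fl^{(\pi_{B})_{*}X}_{0,t}\big]^{\g}_{\cdots}\big)$; since $\pi_{B}^{k,\g}$ simply truncates a germ to its $k$-jet, this curve is precisely $t\mapsto\big(\Fl^{(\pi_{B})_{*}X}_{t,0}(x),\big[\Fl^{X}_{t,0}\circ\sigma\circ\Fl^{(\pi_{B})_{*}X}_{0,t}\big]^{k}_{\Fl^{(\pi_{B})_{*}X}_{t,0}(x)}\big)$. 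But the derivative at $t=0$ of that curve is, by definition, $\pf^{k}(X)\big(x,[\sigma]^{k}_{x}\big)=\pf^{k}(X)\big(\pi_{B}^{k,\g}(x,[\sigma]^{\g}_{x})\big)$, which is the right-hand side. So the identity is essentially immediate once one unwinds the definitions; the only genuine content is the observation that truncation of germs commutes with the prolonging flows, which holds because the $k$-jet of a composite depends only on the $k$-jets of the factors.

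The one point requiring a little care is the appeal to Definition~\ref{differential}: I must check that $\pi_{B}^{k,\g}$ is smooth (so that $\mathrm{d}\pi_{B}^{k,\g}$ is defined) and that the curve $t\mapsto\big(\Fl^{(\pi_{B})_{*}X}_{t,0}(x),\big[\cdots\big]^{\g}_{\cdots}\big)$ is indeed a plot of $\Gamma_{\g}(\pi_{B})$ centered at the given point, so that applying $\mathrm{d}\pi_{B}^{k,\g}$ to its velocity makes sense. Smoothness of $\pi_{B}^{k,\g}$ is stated in the excerpt (the arguments of \cite[Proposition~5.14]{mac3}), and that the parallel-transport-type curve is a plot follows from the property \emph{funct} (Proposition~\ref{functprop}): the assignment $(t,y)\mapsto\Fl^{X}_{t,0}\big(\sigma\big(\Fl^{(\pi_{B})_{*}X}_{0,t}(y)\big)\big)$ is smooth on a neighbourhood of $(0,x)$ by smooth dependence of flows on time and initial conditions, hence descends to a plot of $\SS_{\loc}$ and thence of $\Gamma_{\g}(\pi_{B})$. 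With this in hand the chain-rule manipulation above is rigorous.

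Finally, for the tower statement, I would argue as follows. The compatibility $\pi_{B}^{l,\g}=\pi_{B}^{l,k}\circ\pi_{B}^{k,\g}$ for $l\le k\le\infty$ is recorded in the excerpt. Combining this with the identity just proved and with the intertwining $\mathrm{d}\pi_{B}^{l,k}\circ\pf^{k}(X)=\pf^{l}(X)\circ\pi_{B}^{l,k}$ from Proposition~\ref{towerfield1}, one gets for $l\le k<\infty$ that $\mathrm{d}\pi_{B}^{l,k}\circ\big(\mathrm{d}\pi_{B}^{k,\g}\circ\pf^{\g}(X)\big)=\mathrm{d}\pi_{B}^{l,k}\circ\pf^{k}(X)\circ\pi_{B}^{k,\g}=\pf^{l}(X)\circ\pi_{B}^{l,k}\circ\pi_{B}^{k,\g}=\pf^{l}(X)\circ\pi_{B}^{l,\g}$, i.e.\ the triangle through $\Gamma_{\g}$ commutes over each stage of the jet tower. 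Passing to the projective limit gives the statement for $k=\infty$ as well, so that $\pf^{\g}(X)$ sits atop the tower of Proposition~\ref{towerfield1} and maps to $\pf^{\infty}(X)$ under $\big(\pi_{B}^{\infty,\g}\big)_{*}$. I do not expect any serious obstacle here; the only thing to be vigilant about is that all the maps in play are maps of diffeological (not just smooth-manifold) spaces, so every use of a differential or a limit must be the diffeological one, as set up in the Background section.
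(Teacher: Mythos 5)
Your proposal is correct and follows essentially the same route as the paper: a direct pointwise computation pushing the differential through $\pi_{B}^{k,\g}$ via the definition of the diffeological differential, using the fact that truncation of a germ to its $k$-jet commutes with the prolonging flows, and then handling $k=\infty$ by the universal property of the projective limit. The extra care you take in verifying that the parallel-transport curve is a plot and in spelling out the commutativity of the tower via Proposition~\ref{towerfield1} is left implicit in the paper but is consistent with its argument.
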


\begin{proof}
For any $k<\infty$ and $X\in\XF_{\proj}(B)$, we have
\begin{align*}
{\rm d}\pi_{B}^{k,\g}\circ\pf^{\g}(X)\big(x,[\sigma]^{\g}_{x}\big)
&= \frac{\rm d}{{\rm d}t}\bigg|_{t=0}\pi_{B}^{k,\g}\Big(\Fl^{(\pi_{B})_{*}X}_{t,0}(x),
\big[\Fl^{X}_{t,0}\circ\sigma\circ\Fl^{(\pi_{B})_{*}(X)}_{0,t}\big]^{\g}_{\Fl_{t,0}^{(\pi_{B})_{*}(X)}(x)}\Big)
\\
&= \frac{\rm d}{{\rm d}t}\bigg|_{t=0}\Big(\Fl^{(\pi_{B})_{*}X}_{t,0}(x), \big[\Fl^{X}_{t,0}\circ\sigma\circ\Fl^{(\pi_{B})_{*}(X)}_{0,t}\big]^{k}_{\Fl^{(\pi_{B})_{*}(X)}_{t,0}(x)}\Big)
\\
&= \pf^{k}(X)\big(\pi_{B}^{k,\g}\big(x,[\sigma]^{\g}_{x}\big)\big)
\end{align*}
for all $\big(x,[\sigma]^{\g}_{x}\big)\in\Gamma_{\g}(\pi_{B})$. For $k=\infty$ the result follows from the universal property of the projective limit of the $\pi_{B}^{k}$.
\end{proof}

The injectivity of the jet prolongation operators, which is apparent from equation~\eqref{prolongation} toge\-ther with Proposition~\ref{germprol}, implies that the germinal prolongation operator $\pf^{\g}\colon \XF_{\proj}(B)\rightarrow\XF(\Gamma_{\g}(\pi_{B}))$ of a bundle $\pi_{B}\colon B\rightarrow M$ is injective. Consequently, on $\XF_{\proj}(\Gamma_{\g}(\pi_{B}))$ we have a Lie bracket that is well-defined by the formula
\begin{gather*}
[\pf^{\g}(X),\pf^{\g}(Y)]:=\pf^{\g}([X,Y]),\qquad X,Y\in\XF_{\proj}(B),
\end{gather*}
with respect to which each $\big(\pi_{B}^{k,\g}\big)_{*}$ is a homomorphism of Lie algebras. While we will not be making use of this feature in this article, we remark that it distinguishes $\XF_{\proj}(\Gamma_{\g}(\pi_{B}))$ as a~rather special subspace of $\XF(\Gamma_{\g}(\pi_{B}))$, which, like the vector fields of many other diffeological spaces~\cite{cw}, does not carry a natural Lie bracket in general.

It is in examples arising from singular foliations that one sees the justification for the terminology ``pseudo-bundle'' in that the fibres of a pseudo-bundle of germs need not be isomorphic in general.

\begin{Example}\label{germfol}
Consider the foliation $\FF$ of $\RB$ generated by the vector field $X:=f\frac{\partial}{\partial x}$, where $f$ is any smooth function on $\RB$ such that $f(x) = 0$ for all $x\leq 0$ and such that $f(x)\neq 0$ for all $x>0$. Then for any $x_{0}<0$, the fibre $\Gamma_{\g}(\FF)_{x_{0}}$ consists of only a single point (every multiple of $X$ is equal to zero in a neighbourhood of $x_{0}$), while the fibre $\Gamma_{\g}(\FF)_{y_{0}}$ for any $y_{0}>0$ is the infinite-dimensional diffeological space consisting of all germs of smooth functions defined near the point $y_{0}$, and $\Gamma_{\g}(\FF)_{0}$ is the infinite-dimensional diffeological space consisting of germs of~smooth functions defined near zero that vanish at zero and below.
\end{Example}

\begin{Remark}
The pseudo-bundles of germs of singular foliations may be of particular interest~-- as we alluded to in Example~\ref{ex1}, they are canonically pseudo-bundles of diffeological Lie algebras. To~be more precise, let $\FF$ be a singular foliation of a manifold $M$. For each $x\in M$, let $\Gamma_{\loc}(\FF)_{x}$ be the diffeological subspace of $\Gamma_{\loc}(\FF)$ consisting of those local sections whose domains con\-tain~$x$, and let $q_{x}\colon \Gamma_{\loc}(\FF)_{x}\rightarrow\Gamma_{\g}(\FF)_{x}$ be the quotient map. Then it is routine to show that the formulae
\begin{gather*}
[q_{x}(X),q_{x}(Y)]:=q_{x}([X,Y]),\qquad
q_{x}(X)+q_{x}(Y) =q_{x}(X+Y),\qquad
\alpha q_{x}(X):=q_{x}(\alpha X)
\end{gather*}
for $X,Y\in\Gamma(\FF)_{x}$ and $\alpha\in\RB$ define a Lie algebra structure on $\Gamma_{\g}(\FF)_{x}$ which is smooth with respect to subspace diffeology from $\Gamma_{\g}(\FF)$. Thus $\pi_{\FF}^{\g}\colon \Gamma_{\g}(\FF)\rightarrow M$ is a diffeological vector pseudo-bundle of Lie algebras.
\end{Remark}

The final result of this subsection elucidates the nature of the tangent spaces to bundles of~germs. We~refer to Definition~\ref{tangentbundle} for notation.

\begin{Proposition}\label{tangentgerm}
Let $\pi_{B}\colon B\rightarrow M$ be a fibre bundle, with vertical tangent bundle $VB\rightarrow B$, and let $\big(x,[\sigma]^{\g}_{x}\big)\in\Gamma_{\g}(\pi_{B})$. Let $\Gamma_{\g}(\sigma^{*}VB)_{x}$ denote the vector space of germs at $x$ of sections of~$\sigma^{*}VB$. Then the map
\begin{gather}\label{tangerm}
\frac{\rm d}{{\rm d}t}\bigg|_{0}\big(\gamma(t),[\sigma_{t}]^{\g}_{\gamma(t)}\big) \mapsto \bigg(\frac{\rm d}{{\rm d}t}\bigg|_{0}\gamma(t),\bigg[\frac{\rm d}{{\rm d}t}\bigg|_{0}\sigma_{t}\bigg]_{x}\bigg),
\end{gather}
defines a linear isomorphism from $T_{\left(x,[\sigma]^{\g}_{x}\right)}\Gamma_{\g}(\pi_{B})$ to the vector space $T_{x}M\oplus\Gamma_{\g}(\sigma^{*}VB)_{x}$. In~particular, if ${\rm d}/{\rm d}t|_{0}\big(\gamma(t),[\sigma_{t}]^{\g}_{\gamma(t)}\big) = 0$, then there exists a neighbourhood $\OO$ of $x$ on which one has
\begin{gather*}
\frac{\rm d}{{\rm d}t}\bigg|_{0}\sigma_{t}(y) = 0
\end{gather*}
for all $y\in\OO$.
\end{Proposition}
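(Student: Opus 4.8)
The plan is to construct the map in \eqref{tangerm} --- call it $\Phi$ --- directly from the definition of the internal tangent space, to check that it is well defined and linear (which already yields the ``in particular'' clause), and then to prove that it is bijective; injectivity is the only genuinely delicate step.

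For the construction: given a plot $\varphi$ of $\Gamma_{\g}(\pi_{B})$ centred at $\big(x,[\sigma]^{\g}_{x}\big)$, I would lift it through the subduction $q\colon\Gamma_{\loc}(\pi_{B})\to\Gamma_{\g}(\pi_{B})$, on a neighbourhood of $0$, to a plot of $\Gamma_{\loc}(\pi_{B})$ --- that is, to a pair $(\tilde\sigma,\tilde x)$ with $\tilde x$ a plot of $M$, $\tilde x(0)=x$, and $\tilde\sigma$ a parametrisation of $(\Gamma_{\pi_{B}})_{\loc}$ with property \textit{funct}, so that $(u,z)\mapsto\tilde\sigma(u)(z)$ is smooth on $V\times\OO'$ for neighbourhoods $V\ni0$, $\OO'\ni x$ on which every $\tilde\sigma(u)$ is defined and on which $\tilde\sigma(0)=\sigma$. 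For $\vec v\in T_{0}\dom(\varphi)$ I would set
\[
\ell_{\varphi}(\vec v):=\Big(\mathrm d\tilde x(\vec v),\ \big[\,z\mapsto\tfrac{\mathrm d}{\mathrm d\epsilon}\big|_{0}\tilde\sigma(\epsilon\vec v)(z)\,\big]_{x}\Big).
\]
Differentiating $\pi_{B}\circ\tilde\sigma(u)=\id$ shows the second entry is a germ at $x$ of a section of $\sigma^{*}VB$, and $\ell_{\varphi}$ is visibly linear. One then checks --- by the chain rule, using that \textit{funct} is stable under precomposition as in Proposition~\ref{functprop} --- that $\ell_{\varphi}(f_{*}\vec w)=\ell_{\varphi\circ f}(\vec w)$ for smooth $f$ with $f(0)=0$, and that the outcome is independent of the chosen lift; hence $\ell:=\bigoplus_{\varphi}\ell_{\varphi}$ annihilates the subspace defining $T_{(x,[\sigma]^{\g}_{x})}\Gamma_{\g}(\pi_{B})$ and descends to a linear map $\Phi$, which on a $1$-plot generator is given by the displayed formula. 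In particular $\Phi(0)=0$ gives the final assertion of the proposition.

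Surjectivity is easy: fixing a chart on $M$ about $x$, a trivialisation $B\cong\OO_{M}\times\OO_{F}$ over it, and a chart on $\OO_{F}$ about $\sigma(x)$ --- so that $x=0$, $\sigma(z)=(z,s(z))$ with $s(0)=0$, and $\sigma^{*}VB\cong\OO_{M}\times\RB^{\dim F}$, a germ of whose sections is simply a germ $[\xi]_{0}$ of an $\RB^{\dim F}$-valued map --- the $1$-plot $t\mapsto\big(t\vec v,\big[z\mapsto(z,s(z)+t\xi(z))\big]^{\g}_{t\vec v}\big)$ has $\Phi$-image $(\vec v,[\xi]_{0})$. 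For injectivity, take $\omega\in\ker\Phi$, represented by a finite sum $\sum_{i}(\vec v_{i})_{\varphi_{i}}$; using the tangent-space relation $(\vec v)_{\varphi}\sim(\partial_{t})_{\varphi\circ(t\mapsto t\vec v)}$ I may assume each $\varphi_{i}$ is a $1$-plot $\psi_{i}(t)=\big(\gamma_{i}(t),[\sigma^{i}_{t}]^{\g}_{\gamma_{i}(t)}\big)$, and after lifting and intersecting domains all the $\sigma^{i}_{t}$ live, smoothly in $(t,z)$, on a common neighbourhood of $x$ inside the coordinates just fixed. I would then combine them into the single $m$-plot
\[
\Psi(t_{1},\dots,t_{m}):=\Big(\sum_{i}\gamma_{i}(t_{i}),\ \big[\,z\mapsto\sum_{i}\sigma^{i}_{t_{i}}(z)-(m-1)\sigma(z)\,\big]^{\g}_{\sum_{i}\gamma_{i}(t_{i})}\Big),
\]
which is a plot centred at $\big(x,[\sigma]^{\g}_{x}\big)$ that restricts to $\psi_{i}$ on the $i$-th coordinate axis; the defining relations of the internal tangent space give $(\partial_{t_{i}})_{\Psi}\sim(\partial_{t})_{\psi_{i}}$ and $\sum_{i}(\partial_{t_{i}})_{\Psi}\sim(\partial_{t})_{\Psi\circ(t\mapsto(t,\dots,t))}$, so $\omega$ is also represented by $(\partial_{t})_{\psi}$ for a single $1$-plot $\psi(t)=\big(\Gamma(t),[\Sigma_{t}]^{\g}_{\Gamma(t)}\big)$ with $\Gamma=\sum_{i}\gamma_{i}$ and $\Sigma_{t}=\sum_{i}\sigma^{i}_{t}-(m-1)\sigma$. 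Since $\ell$ annihilates the relations while $\Phi(\omega)=0$, the first-order data of $\psi$ vanishes: $\dot\Gamma(0)=0$ and $\tfrac{\mathrm d}{\mathrm dt}\big|_{0}\Sigma_{t}(z)=0$ for $z$ near $x$.

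Everything now rests on the core claim that such a $\psi$ represents the zero tangent vector --- and this is the step I expect to be the main obstacle, since the manifold reflex of simply discarding second-order terms is unavailable in the diffeological setting. Writing $\Sigma_{t}(z)=(z,S(t,z))$ in the chosen coordinates, Taylor's theorem with integral remainder gives $\Gamma(t)=t^{2}\Lambda(t)$ and $S(t,z)=s(z)+t^{2}R(t,z)$ with $\Lambda,R$ smooth near the origin. I would then introduce the $2$-plot
\[
Q(a,b):=\Big(ab\,\Lambda(b),\ \big[\,z\mapsto(z,s(z)+ab\,R(b,z))\,\big]^{\g}_{ab\,\Lambda(b)}\Big),
\]
which is smooth, centred at $\big(x,[\sigma]^{\g}_{x}\big)$, satisfies $Q(t,t)=\psi(t)$, and is \emph{constant along each coordinate axis} because the factor $ab$ vanishes on $\{a=0\}$ and on $\{b=0\}$. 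Consequently
\[
\tfrac{\mathrm d}{\mathrm dt}\Big|_{0}\psi(t)=\mathrm dQ\big(\partial_{a}|_{(0,0)}+\partial_{b}|_{(0,0)}\big)=\mathrm dQ(\partial_{a}|_{(0,0)})+\mathrm dQ(\partial_{b}|_{(0,0)})=0,
\]
using the linearity of $\mathrm dQ$ on $T_{(0,0)}\RB^{2}$ and the fact that the differential of a constant plot is zero; hence $\omega=0$ and $\Phi$ is injective. The decisive and least routine ingredient is thus the device of splitting the second-order datum $t^{2}$ as a product $ab$ and using a plot degenerating to a constant on each axis, which makes the second-order terms invisible to the internal tangent functor; the reduction to a single $1$-plot and the bookkeeping with the relations of $T_{(x,[\sigma]^{\g}_{x})}\Gamma_{\g}(\pi_{B})$ are the remaining points that need care.
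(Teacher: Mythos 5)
Your strategy matches the paper's in outline---represent every internal tangent vector by a single $1$-plot using multi-parameter combinations of lifts, then read the isomorphism off the first-order data of a lift---and in two respects you go further than the paper does: you prove surjectivity explicitly, and your degenerate $2$-plot $Q(a,b)$ (constant on both coordinate axes, equal to $\psi$ on the diagonal) is a genuine argument that a $1$-plot whose lift has vanishing first-order data represents the zero internal tangent vector, where the paper's proof only asserts that the map ``admits the obvious inverse''.

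The genuine gap is the clause ``\dots and that the outcome is independent of the chosen lift'', which you assert without proof. Two lifts $\tilde\sigma,\tilde\sigma'$ of the same plot of $\Gamma_{\g}(\pi_{B})$ satisfy only $[\tilde\sigma(u)]^{\g}_{\tilde x(u)}=[\tilde\sigma'(u)]^{\g}_{\tilde x(u)}$ for each $u$, i.e.\ their difference vanishes on a neighbourhood of $\tilde x(u)$ that may shrink as $u$ varies, and this does not control the germ of the $u$-derivative. Concretely, take $M=\RB$, $B=\RB\times\RB$, $x=0$, $\sigma=0$; let $\phi$ be smooth with $\phi(s)=0$ for $s\le 0$ and $\phi(s)>0$ for $s>0$, and set $E(b)=\mathrm{e}^{-1/b^{2}}$, extended by $E(0)=0$. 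Then $\tilde\sigma'(u)(z):=u\,\phi\big(z^{2}-E(u)\big)$ is a lift of the \emph{constant} plot at $\big(0,[0]^{\g}_{0}\big)$ (for $u\neq 0$ it vanishes for $|z|\le \mathrm{e}^{-1/(2u^{2})}$), yet $\frac{\rm d}{{\rm d}u}\big|_{0}\tilde\sigma'(u)=\phi(z^{2})$ has nonzero germ at $0$, while the constant lift gives $0$; so your $\ell_{\varphi}$ depends on the lift. Worse, the very degeneration your $ab$-trick exploits turns this into a failure of the statement itself: the $2$-plot $P(a,b):=\big(0,\big[a\,\phi(\cdot^{2}-E(b))\big]^{\g}_{0}\big)$ is constant on the $b$-axis and on the diagonal, so the defining relations of the internal tangent space give $(\partial_{a})_{P}=(\partial_{a}+\partial_{b})_{P}-(\partial_{b})_{P}=0$, whence the non-constant $1$-plot $t\mapsto\big(0,\big[t\,\phi(\cdot^{2})\big]^{\g}_{0}\big)$ has zero internal tangent vector even though $[\phi(\cdot^{2})]_{0}\neq 0$. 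This contradicts the ``in particular'' clause directly and shows the candidate inverse annihilates every germ that is flat at $x$. I do not see how an argument along your lines (or the paper's, which is silent on well-definedness) can close this: the analogous statement is fine for the jet bundles $\Gamma_{k}(\pi_{B})$, $k\le\infty$, but for $\Gamma_{\g}(\pi_{B})$ the internal tangent space appears to be a proper quotient of $T_{x}M\oplus\Gamma_{\g}(\sigma^{*}VB)_{x}$.
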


\begin{proof}
First note that by definition of the diffeology on $\Gamma_{\g}(\pi_{B})$, any 1-plot $(-\epsilon,\epsilon)\rightarrow\Gamma_{\g}(\pi_{B})$ can, for sufficiently small $\epsilon$, be guaranteed to be of the form $t\mapsto \big(\gamma(t),[\sigma_{t}]^{\g}_{\gamma(t)}\big)$ for some plots $t\mapsto\sigma_{t}$ of $\Gamma_{\loc}(\pi_{B})$ and $t\mapsto\gamma(t)$ of $M$. Now observe that any sum of the form
\begin{gather*}
\frac{\rm d}{{\rm d}t}\bigg|_{0}\big(\gamma(t),[\sigma_{t}]^{\g}_{\gamma(t)}\big) +\frac{\rm d}{{\rm d}t}\bigg|_{0}\big(\tilde{\gamma}(t),[\tilde{\sigma}_{t}]^{\g}_{\tilde{\gamma}(t)}\big)
\end{gather*}
in $T_{\left(x,[\sigma]^{\g}_{x}\right)}\Gamma_{\g}(\pi_{B})$ can be represented by a single time derivative. By the first remark of this proof, to see this it suffices to work in $\Gamma_{\loc}(\pi_{B})$. Letting $\tilde{\OO}$ be a small neighbourhood of $\sigma(x)$ covering a small neighbourhood $\OO$ of $x$, let $\varphi\colon \tilde{\OO}\rightarrow\OO\times\RB^{m}$ denote the composite of coordinates on the fibre of $B$ with a local trivialisation of $B$ about $x$ such that $\varphi(\sigma(x)) = (x,0)$. Then the formula
\begin{gather*}%\label{kappa}
\psi(r,s):=\varphi^{-1}(\varphi\circ\sigma_{r}+\varphi\circ\tilde{\sigma}_{s})
\end{gather*}
defines a 2-parameter plot of $\Gamma_{\loc}(\pi_{B})$. Letting $\iota,\,\tilde{\iota}\colon \RB\rightarrow\RB^{2}$ denote the maps $t\mapsto(t,0)$ and $t\mapsto(0,t)$ respectively, we have $\sigma = \psi\circ\iota$ and $\tilde{\sigma} =\psi\circ\tilde{\iota}$, and therefore
\begin{gather*}
\frac{\rm d}{{\rm d}t}\bigg|_{0}\!\sigma_{t}+\frac{\rm d}{{\rm d}t}\bigg|_{0}\!\tilde{\sigma}_{t} =(\psi\circ\iota)_{*}\bigg(\frac{\rm d}{{\rm d}t}\bigg)+(\psi\circ\tilde{\iota})_{*}\bigg(\frac{\rm d}{{\rm d}t}\bigg)\! = \psi_{*}\bigg(\iota_{*}\bigg(\frac{\rm d}{{\rm d}t}\bigg)+\tilde{\iota}_{*}\bigg(\frac{\rm d}{{\rm d}t}\bigg)\!\bigg) = \psi_{*}\bigg(\frac{\rm d}{{\rm d}r},\frac{\rm d}{{\rm d}s}\bigg)
\end{gather*}
in $T_{\sigma}\Gamma_{\g}(\pi_{B})$. Now observe that if $h\colon \RB\rightarrow\RB^{2}$ denotes the map $t\mapsto(t,t)$, then, defining $\kappa_{t}:=\psi(t,t)$, we have
\begin{gather*}
\frac{\rm d}{{\rm d}t}\bigg|_{0}\kappa_{t} = (\psi\circ h)_{*}\bigg(\frac{\rm d}{{\rm d}t}\bigg) = \psi_{*}h_{*}\bigg(\frac{\rm d}{{\rm d}t}\bigg) = \psi_{*}\bigg(\frac{\rm d}{{\rm d}r},\frac{\rm d}{{\rm d}s}\bigg) = \frac{\rm d}{{\rm d}t}\bigg|_{0}\sigma_{t}+\frac{\rm d}{{\rm d}t}\bigg|_{0}\tilde{\sigma}_{t}.
\end{gather*}
Higher sums can be dealt with via a similar argument. Thus equation~\eqref{tangerm} does indeed suffice to define a map $T_{\left(x,[\sigma]^{\g}_{x}\right)}\Gamma_{\g}(\pi_{B})\rightarrow T_{x}M\oplus\Gamma_{\g}(\sigma^{*}VB)_{x}$, whose linearity is clear, and which admits the obvious inverse
\begin{gather*}
\bigg(\frac{\rm d}{{\rm d}t}\bigg|_{0}\gamma(t),\bigg[\frac{\rm d}{{\rm d}t}\bigg|_{0}\sigma_{t}\bigg]^{\g}_{x}\bigg)\mapsto \frac{\rm d}{{\rm d}t}\bigg|_{0}\big(\gamma(t),[\sigma]^{\g}_{\gamma(t)}\big).\tag*{\qed}
\end{gather*}
\renewcommand{\qed}{}
\end{proof}

\subsection{Relationship with sheaves}\label{relsheaf}

In this subsection we present some results and examples which relate our pseudo-bundles of germs to more well-known objects arising in sheaf theory. The~first such result, which is required to define the correct notion of morphism between singularly foliated bundles, is that a smooth morphism of sheaves gives rise to a morphism of the associated pseudo-bundles.

\begin{Proposition}\label{relsheaf1}
Let $M$ be a manifold and let $\SS^{1}$ and $\SS^{2}$ be sheaves of sections of fibre bundles $\pi_{B_{1}}$ and $\pi_{B_{2}}$ over $M$ respectively. Suppose that $\tilde{F}\colon \SS^{1}\rightarrow\SS^{2}$ is a morphism of sheaves for which the induced morphism $\SS^{1}_{\loc}\rightarrow\SS^{2}_{\loc}$ is smooth $($see Proposition~$\ref{functprop})$. Then the formula
\begin{gather*}
F\big(x,[\sigma]^{\g}_{x}\big):=\big(x,[\tilde{F}(\sigma)]^{\g}_{x}\big),\qquad \big(x,[\sigma]^{\g}_{x}\big)\in\Gamma_{\g}\big(\SS^{1}\big)
\end{gather*}
defines a morphism $F\colon \Gamma_{\g}\big(\SS^{1}\big)\rightarrow\Gamma_{\g}\big(\SS^{2}\big)$ of diffeological pseudo-bundles.
\end{Proposition}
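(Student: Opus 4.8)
The plan is to reduce the whole statement to the level of the total spaces $\SS^{i}_{\loc}$, where smoothness is \emph{assumed}, and then transport the conclusion down along the quotient subductions $q_{i}\colon \Gamma_{\loc}(\SS^{i})\rightarrow\Gamma_{\g}(\SS^{i})$. Before anything else I would check that the formula for $F$ is well defined on germ classes and lands in the right space. If $(x,\sigma)\sim_{\g}(x,\eta)$, then $\sigma$ and $\eta$ agree on some open neighbourhood $V$ of $x$ inside $\dom(\sigma)\cap\dom(\eta)$; since $\tilde{F}$ is a morphism of sheaves it commutes with restrictions, so $\tilde{F}(\sigma)|_{V}=\tilde{F}(\sigma|_{V})=\tilde{F}(\eta|_{V})=\tilde{F}(\eta)|_{V}$, whence $[\tilde{F}(\sigma)]^{\g}_{x}=[\tilde{F}(\eta)]^{\g}_{x}$ and $F$ descends to the quotient. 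The same fact that $\tilde{F}$ preserves domains, i.e.\ $\tilde{F}(\sigma)\in\SS^{2}(\dom\sigma)$, shows both that $\big(x,\tilde{F}(\sigma)\big)$ genuinely lies in $\Gamma_{\loc}(\SS^{2})$ and that $\pi^{\g}_{\SS^{2}}\circ F=\pi^{\g}_{\SS^{1}}$, so $F$ is automatically a map over $M$.

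Next I would introduce the lifted map $\hat{F}\colon \Gamma_{\loc}(\SS^{1})\rightarrow\Gamma_{\loc}(\SS^{2})$, $(x,\sigma)\mapsto\big(x,\tilde{F}(\sigma)\big)$, and show it is smooth. Since $\Gamma_{\loc}(\SS^{i})$ carries the subspace diffeology from the product $M\times\SS^{i}_{\loc}$, it is enough to verify that the two component maps of $\hat{F}$ are smooth: the first component is the restriction to $\Gamma_{\loc}(\SS^{1})$ of the projection $M\times\SS^{1}_{\loc}\rightarrow M$, hence smooth; the second component is the composite of the (smooth) restricted projection $\Gamma_{\loc}(\SS^{1})\rightarrow\SS^{1}_{\loc}$ with the induced map $\SS^{1}_{\loc}\rightarrow\SS^{2}_{\loc}$, which is smooth by hypothesis (this is where the assumption on $\tilde{F}$ enters, via Proposition~\ref{functprop} and Definition~\ref{funl}). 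Hence $\hat{F}$ is smooth.

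Finally I would deduce smoothness of $F$ itself from that of $\hat{F}$. By construction one has the commuting square $F\circ q_{1}=q_{2}\circ\hat{F}$, and $q_{2}$ is a subduction, hence smooth, so $F\circ q_{1}$ is smooth; since $q_{1}\colon \Gamma_{\loc}(\SS^{1})\rightarrow\Gamma_{\g}(\SS^{1})$ is a subduction, the universal property of the pushforward (quotient) diffeology forces $F$ to be smooth. Together with the first paragraph this exhibits $F$ as a smooth map over $M$, i.e.\ a morphism of diffeological pseudo-bundles. I expect no conceptual obstacle here: the only thing requiring genuine care is the bookkeeping with the three diffeologies in play — the functional diffeology on $\SS^{i}_{\loc}$, the subspace diffeology on $\Gamma_{\loc}(\SS^{i})\subset M\times\SS^{i}_{\loc}$, and the quotient diffeology on $\Gamma_{\g}(\SS^{i})$ — and the elementary but essential remark that a morphism of sheaves preserves domains, so that $\hat{F}$ takes values where it should; everything else is a routine application of the subduction universal property.
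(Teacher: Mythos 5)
Your proposal is correct and follows essentially the same route as the paper: the paper's (much terser) proof likewise notes that fibre-preservation is clear and that smoothness of $F$ follows from smoothness of the induced map $\SS^{1}_{\loc}\rightarrow\SS^{2}_{\loc}$ because the quotient diffeology on $\Gamma_{\g}\big(\SS^{i}\big)$ is inherited from the functional diffeology on $\SS^{i}_{\loc}$. Your version simply makes explicit the details the paper leaves implicit, namely the well-definedness on germ classes via compatibility of $\tilde{F}$ with restrictions, the intermediate map $\hat{F}$ on $\Gamma_{\loc}$, and the descent along the subductions $q_{i}$.
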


\begin{proof}
It is clear that $F$ preserves fibres, so we need only check smoothness. Since for each $i=1,2$ the quotient diffeology on $\Gamma_{\g}\big(\SS^{i}\big)$ is inherited from the functional diffeology on $\SS^{i}_{\loc}$, smoothness of the map $\SS^{1}_{\loc}\rightarrow\SS^{2}_{\loc}$ associated to $\tilde{F}$ ensures smoothness of $F$.
\end{proof}

The converse of Proposition~\ref{relsheaf1} is \textit{not true} in general~-- namely, a smooth morphism of pseudo-bundles of germs \textit{need not arise} from any morphism (smooth or otherwise) of the underlying sheaves. This can be seen in the simplest of examples.

\begin{Example}%\label{relsheaf2}
Consider $M = \RB$, and $B=\RB\times\RB$ with $\pi_{B}\colon B\rightarrow M$ the projection onto the first factor. Consider the map $F\colon \Gamma_{\g}(\pi_{B})\rightarrow\Gamma_{\g}(\pi_{B})$ defined by
\begin{gather*}
F\big(x,[f]^{\g}_{x}\big):=\big(x,[m_{x}f]^{\g}_{x}\big),\qquad \big(x,[f]^{\g}_{x}\big)\in\Gamma_{\g}(\pi_{B}),
\end{gather*}
where $m_{x}f$ denotes the function $y\mapsto xf(y)$. Then $F$ is smooth~-- indeed, if $U$ is any open subset of $\RB^{n}$ and $\tilde{x}\colon U\rightarrow\RB$ and $\tilde{f}\colon U\rightarrow C^{\infty}(\RB,\RB)$ are any two plots, then for each $x\in\RB$, smoothness of
\begin{gather*}
(u,y)\mapsto x\tilde{f}(u)(y)
\end{gather*}
guarantees that $F\circ\big(\tilde{x},[\tilde{f}]^{\g}_{\tilde{x}}\big)\colon \RB^{n}\rightarrow\Gamma_{\g}(\pi_{B})$ is smooth. Now suppose that $\tilde{F}\colon C^{\infty}_{\RB}\rightarrow C^{\infty}_{\RB}$ is a~morphism of sheaves. Then for $F$ to be induced by $\tilde{F}$, we must in particular have
\begin{gather*}
\big[\tilde{F}(\id)\big]^{\g}_{0} = [0\id]^{\g}_{0} = 0,
\end{gather*}
so that there must exist $\epsilon>0$ for which $\tilde{F}(\id)$ vanishes identically on $(-\epsilon,\epsilon)$. However, for $x\in(-\epsilon,\epsilon)\setminus 0$, we have
\begin{gather*}
\big[\tilde{F}(\id)\big]^{\g}_{x} = 0\neq[m_{x}\id]^{\g}_{x}.
\end{gather*}
Thus $F$ cannot arise from any morphism of sheaves.
\end{Example}

Morphisms of pseudo-bundles of germs which arise from morphisms of sheaves in the sense of~Pro\-position~\ref{relsheaf1} will play an important role in the correct notion of morphism between singularly foliated bundles. We~thus record the following definition.

\begin{Definition}
Let $M$ be a manifold and let $\SS_{1}$ and $\SS_{2}$ be sheaves of sections of fibre bundles over $M$. We~say that a morphism $F\colon \Gamma_{\g}(\SS_{1})\rightarrow\Gamma_{\g}(\SS_{2})$ is \textit{sheaf-induced} if it arises from a smooth morphism of the sheaves $\SS_{1}\rightarrow\SS_{2}$ in the sense of Proposition~\ref{relsheaf1}.
\end{Definition}

\begin{Remark}\label{relsheaf4}
For a sheaf $\SS$ of sections of a fibre bundle $\pi_{B}$ over a manifold $M$, we clearly have that $\Gamma_{\g}(\SS)$ is equal \textit{as a set} to the \'{e}tale space~\cite[p.~67]{hartshorne}
\begin{gather*}
E(\SS):=\bigsqcup_{x\in M}\SS_{x} = \Gamma_{\g}(\SS)
\end{gather*}
of the sheaf $\SS$. The~\'{e}tale space $E(\SS)$ is usually equipped with the \textit{\'{e}tale topology}, whose topology is generated by those sets of the form
\begin{gather*}
\UU(\sigma,\OO):=\big\{[\sigma]^{\g}_{x}\colon x\in\OO\big\}
\end{gather*}
defined for open sets $\OO$ of $M$ and $\sigma\in\SS(\OO)$. The~set $\Gamma_{\g}(\SS)$ may also be thought of with the D-topology (see Definition~\ref{Dtop}) arising from the diffeology on $\Gamma_{\g}(\SS)$ described in Definition~\ref{funl}, whose open sets are precisely those subsets $A$ for which $\rho^{-1}(A)$ is open in $\dom(\rho)$ for all plots~$\rho$ of~$\Gamma_{\g}(\SS)$.

It is easy to see that the D-topology is contained in the \'{e}tale topology. Suppose that a~non\-empty subset $A$ of $\Gamma_{\g}(\SS)$ is open in the D-topology, and fix a point $\big(x,[\sigma]_{x}^{\g}\big)\in A$. Choose a~representative $\sigma$ of $[\sigma]_{x}^{\g}$. We~find an open neighbourhood $\OO$ of $x$ in $M$ such that $\UU(\sigma,\OO)$ is contained in $A$. Let $U\subset\RB^{n}$ be an open set associated to a local coordinate system $\varphi\colon U\rightarrow M$ with $x\in\range(\varphi)$, which we assume to be small enough that $\range(\varphi)\subset\dom(\sigma)$. Then the parametrisation $\rho\colon U\rightarrow\Gamma_{\g}(\SS)$ defined by
\begin{gather*}
\rho(u):=\big(\varphi(u),[\sigma]^{\g}_{\varphi(u)}\big)
\end{gather*}
is a plot, and therefore
\begin{gather*}
\rho^{-1}(A) = \big\{u\in U\colon \big(\varphi(u),[\sigma]^{\g}_{\varphi(u)}\big)\in A\big\}
\end{gather*}
is an open subset of $\RB^{n}$. Defining $\OO:=\varphi\big(\rho^{-1}(A)\big)$, we see then that $\UU(\sigma,\OO)\subset A$

Curiously, the converse \textit{does not hold} in general. That is, the D-topology is usually \textit{strictly coarser} than the \'{e}tale topology. Consider $M = \RB$ and $B=\RB\times\RB$ the trivial bundle with $\pi_{B}\colon B\rightarrow M$ the projection onto the first factor. Fix $x_{0}\in\RB$, and consider the plot $\rho\colon \RB\rightarrow\Gamma_{\g}(\pi_{B})$ defined by
\begin{gather*}
\rho(t):=\big(x_{0},[f_{t}]^{\g}_{x_{0}}\big),
\end{gather*}
where $f_{t}$ is the map $x\mapsto tx$. Taking $\Gamma_{\g}(\pi_{B})$ with its \'{e}tale topology, the set
\begin{gather*}
\UU(\id,\RB):=\big\{\big(x,[\id]^{\g}_{x}\big)\colon x\in\RB\big\}
\end{gather*}
is open in $\Gamma_{\g}(\pi_{B})$. However,
\begin{gather*}
\rho^{-1}(\UU(\id,\RB)) = \{t\in\RB\colon \rho(t)\in\UU(\id,\RB)\} = \{1\}\subset\RB
\end{gather*}
is not open. Therefore \'{e}tale-open sets in $\Gamma_{\g}(\pi_{B})$ need not be open in the D-topology.
\end{Remark}

\subsection{The leafwise path category}

In~\cite{mac3}, we introduced a diffeological version of the Moore path category for any regular folia\-tion~$(M,\FF)$. The~objects of this category are simply points in $M$, while the morphisms are smooth, leafwise paths which have \textit{sitting instants} in that they are constant in small neighbourhoods of their endpoints. Composition of morphisms in this category is simply concatenation of paths. In~\cite{VilGar1}, the authors introduce an analogous diffeological space for \textit{singular} foliations, however concatenation of paths in this space no longer defines a category. In~this section, we~intro\-duce a hybrid of these two approaches~-- a diffeological space of integral curves of vector fields defining a singular foliation, for which concatenation of paths defines an associative and smooth multiplication.

We begin by recalling the definition of the path category of a diffeological space from~\cite{mac3} (cf.~\cite{ptfunctor}).

\begin{Definition}%\label{pathcat}
Let $\XX$ be a diffeological space. The~\textit{path category of $\XX$} is the diffeological sub\-space $\PP(\XX)$ of the diffeological product $C^{\infty}(\RB_{\geq0},\XX)\times\RB_{\geq0}$ consisting of pairs $(\gamma,d)$ for which there exist neighbourhoods of $0$ and of $[d,\infty)$ in $\RB_{\geq0}$ on which $\gamma$ is constant. Path categories are functorial~-- given any smooth map $f\colon \XX\rightarrow \YY$ of diffeological spaces, the formula
\begin{gather*}
\PP(f)(\gamma,d):=(f\circ\gamma,d)
\end{gather*}
defines a smooth functor $\PP(f)\colon \PP(\XX)\rightarrow \PP(\YY)$ of diffeological categories.
\end{Definition}

Given any diffeological space $\XX$, range and source maps $r$ and $s$ mapping $\PP(\XX)\rightarrow \XX$ are defined respectively by $(\gamma,d)\mapsto \gamma(d)$ and $(\gamma,d)\mapsto\gamma(0)$, and whenever $r(\gamma_{2},d_{2}) = s(\gamma_{1},d_{1})$, we~define the product $(\gamma_{1}\gamma_{2},d_{1}+d_{2})$ of $(\gamma_{1},d_{1})$ and $(\gamma_{2},d_{2})$ by the formula
\begin{gather*}
\gamma_{1}\gamma_{2}(t):=
\begin{cases}
\gamma_{2}(t),&\text{for}\quad 0\leq t\leq d_{2},
\\
\gamma_{1}(t-d_{2}),&\text{for}\quad d_{2}\leq t<\infty.
\end{cases}
\end{gather*}
This product, together with the range and source maps, are smooth, so that $\PP(\XX)$ is a dif\-feo\-lo\-gi\-cal category~\cite[Proposition~3.22]{mac3}. Moreover~\cite[Proposition~3.23]{mac3} there is a smooth involution $\iota\colon \PP(\XX)\ni(\gamma,d)\mapsto\big(\gamma^{-1},d\big)\rightarrow\PP(\XX)$ defined by the formula
\begin{gather*}
\gamma^{-1}(t):=
\begin{cases}
\gamma(d-t),&\text{if}\quad 0\leq t\leq d,
\\
\gamma(0),&\text{for}\quad t\geq d.
\end{cases}
\end{gather*}
Under favourable circumstances, which will be explicated in this section, the involution $\iota$ des\-cends to a genuine inversion on certain diffeological quotients of $\PP(\XX)$, giving such quotients the structures of diffeological groupoids.

Suppose in particular that $(M,\FF)$ is a singularly foliated manifold, and let $\Gamma_{\g}(\FF)$ be the pseudo-bundle of germs of $\FF$. By connectedness of $\RB_{\geq0}:=[0,\infty)$, any smooth map $\tilde{\gamma}\colon [0,\infty)\rightarrow\Gamma_{\g}(\FF)$ has the form
\begin{gather}\label{notation}
\tilde{\gamma}(t) = \big(\gamma(t),[X(t)]^{\g}_{\gamma(t)}\big),\qquad t\in[0,\infty),
\end{gather}
where $\gamma\colon [0,\infty)\rightarrow M$ is a smooth curve, and where $X\colon [0,\infty)\rightarrow\Gamma_{\loc}(\FF)$ is a smooth function. We~will implicitly use the notation of equation~\eqref{notation} in what follows.

\begin{Definition}\label{leafwisepaths}
Let $(M,\FF)$ be a singularly foliated manifold. We~define the \textit{leafwise} or \textit{$\FF$-path category} $\PP(\FF)$ to be the diffeological subspace of $\PP(\Gamma_{\g}(\FF))$ consisting of triples $\big(\gamma,[X]^{\g},d\big)$ for which $X\colon [0,\infty)\rightarrow\Gamma_{\loc}(\FF)$ satisfies
\begin{enumerate}\itemsep=0pt
\item[(1)] $\dom(X(t))$ is equal to a fixed open neighbourhood of $\gamma([0,\infty)) = \gamma([0,d])$ for all $t$,
\item[(2)] $X(t)(\gamma(t)) = \dot{\gamma}(t)$ for all $t\in[0,\infty)$, and
\item[(3)] $[X(0)]^{\g}_{\gamma(0)} = 0$ and $[X(t)]^{\g}_{\gamma(t)} = 0$ for all $t\geq d$.
\end{enumerate}
With range and source onto $M$ given by $r\big(\gamma,[X]^{\g},d\big):=\gamma(d)$ and $s\big(\gamma,[X]^{\g},d\big):=\gamma(0)$ respectively, by item 3, $\PP(\FF)$ inherits composition from $\PP(\Gamma_{\g}(\FF))$ to become a diffeological category with object space $M$.
\end{Definition}

Definition~\ref{leafwisepaths} is in practice the same as~\cite[Definition~3.1]{VilGar1} given by Garmendia--Villatoro. Note crucially that Definition~\ref{leafwisepaths} requires strictly more information than just the path in~$M$-re\-quiring in addition a time-dependent extension of the tangent field of $\gamma$ to an open neighbourhood of $\gamma$. This is so that flows of elements of $\PP(\FF)$ determine germs of diffeomorphisms defined in open neighbourhoods of their sources. Thus Definition~\ref{leafwisepaths} may be contrasted with the simpler~\cite[Definition~3.23]{mac3} for regular foliations, where such an extension is not explicitly required. In~the regular case, the tangent field along a leafwise path can always be \textit{canonically} extended to a tangent field in a (transverse) open neighbourhood of the path in any foliated chart. The~same is not true in the singular setting.

We end the section by defining what we mean by a holonomy groupoid in the diffeological context. The~definition we give here is a mild generalisation of~\cite[Definition~3.26]{mac3}.

\begin{Definition}
Let $\XX$ be a diffeological space, and $\pi_{\BB}\colon \BB\rightarrow \XX$ be a diffeological pseudo-bundle. Let $P$ be a diffeological category with object space $\XX$. A~smooth functor $T\colon P\rightarrow\Aut(\pi_{\BB})$ (see Definition~\ref{structuregpd}) is called a \textit{transport functor} if there exists a smooth \textit{lifting map}
\begin{gather*}
L\colon \ P\times_{s,\pi_{\BB}}\BB\rightarrow\PP(\BB)
\end{gather*}
such that $T$ can be written as the composite
\begin{gather*}
T(\gamma,d)(b) = r\circ L((\gamma,d),b),\qquad ((\gamma,d),b)\in P\times_{s,\pi_{\BB}}\BB.
\end{gather*}
Note that smoothness of $T$ follows from smoothness of $L$. If in particular $P = \PP(\FF)$ is the leafwise path category of some singularly foliated manifold $(M,\FF)$, we refer to $T$ as a \textit{leafwise transport functor}.
\end{Definition}

An important consequence of the existence of a transport functor is the existence of an~asso\-ciated groupoid called the holonomy groupoid. This can be seen by the arguments of~\cite[Pro\-position~3.27]{mac3}.

\begin{Definition}\label{holgpd}
Let $\XX$ be a diffeological space, $\pi_{\BB}\colon \BB\rightarrow \XX$ a diffeological pseudo-bundle, and~$P$~a~diffeological category with object space $\XX$. If $T\colon P\rightarrow\Aut(\pi_{\BB})$ is a transport functor, then the quotient of $P$ by the equivalence relation
\begin{gather*}
\gamma_{1}\sim\gamma_{2}\Leftrightarrow T(\gamma_{1})=T(\gamma_{2})
\end{gather*}
on its space of morphisms is a diffeological groupoid over $\XX$ called the \textit{holonomy groupoid associated to $T$}.
\end{Definition}

\section{Singularly foliated bundles and their holonomy groupoids}\label{sec4}

\subsection{Singularly foliated bundles}

Singular foliations are generalisations of regular foliations, and are naturally associated to Lie groupoids more generally. In~each of these special cases, one has a notion of fibre bundle which is compatible with the additional structure~-- in the case of a Lie groupoid action, the correct notion is that of an \textit{equivariant bundle}, while for a regular foliation the correct notion is that of a \textit{foliated bundle} in the sense of Kamber and Tondeur~\cite{folbund}. We~give in this section what appears to be the first definition of a fibre bundle compatible with a singular foliation, which simultaneously generalises equivariant and foliated bundles.

First, notice that projectable vector fields on a fibre bundle $\pi_{B}\colon B\rightarrow M$ over a manifold $M$ \textit{do not} generally form a sheaf of $C^{\infty}_{B}$-modules over $B$. Indeed, if $X$ is any projectable vector field and $f\in C^{\infty}(B)$ is any function which is non-constant along the fibres of $\pi_{B}$, then equation~\eqref{projectableeqn} will in general no longer hold for the vector field $fX$. Projectable vector fields are, however, closed under multiplication by functions of the form $f\circ\pi_{B}$, where $f\in C^{\infty}(M)$. Since $\pi_{B}$ is an open map, we can formulate the following definition, which will play a crucial role in our definition of singularly foliated bundle.

\begin{Definition}%\label{projectsheaf}
Let $\pi_{B}\colon B\rightarrow M$ be a fibre bundle. Denote by $C^{\infty}_{\proj,B}$ the subsheaf
\begin{gather*}
C^{\infty}_{\proj,B}(\OO):=\big\{f\circ\pi_{B}\in C^{\infty}_{B}(\OO)\colon f\in C^{\infty}_{M}(\pi_{B}(\OO))\big\}
\end{gather*}
of $C^{\infty}_{B}$, which we call the \textit{sheaf of projectable functions}. We~denote by $\XF_{\proj,B}$ the sheaf of $C^{\infty}_{\proj,B}$-modules
\begin{gather*}
\XF_{\proj,B}(\OO)\!:=\!\big\{X{\in}\XF_{B}(\OO)\colon \text{there is $(\pi_{B})_{*}X{\in}\XF_{M}(\pi_{B}(\OO))$ with ${\rm d}\pi_{B}\!\circ X\! = \! (\pi_{B})_{*}(X)\!\circ\!\pi_{B}$}\big\},
\end{gather*}
which we call the \textit{sheaf of projectable vector fields}.
\end{Definition}

The pushforward of projectable vector fields can now be characterised in the following sheaf-theoretic fashion. Recall that for a map $f\colon X\rightarrow Y$ of topological spaces and a sheaf $\SS$ on $X$, we use $f_{!}\SS$ to denote the pushforward of $\SS$ on $Y$~\cite[p.~65]{hartshorne}.

\begin{Proposition}
Let $\pi_{B}\colon B\rightarrow M$ be a fibre bundle. The~pushforward of projectable vector fields induces a morphism $(\pi_{B})_{*}\colon (\pi_{B})_{!}\XF_{\proj,B}\rightarrow\XF_{M}$ of sheaves of $C^{\infty}_{M}$-modules that preserves the Lie bracket.
\end{Proposition}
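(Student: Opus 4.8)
The plan is to check, one after another, that $X\mapsto(\pi_{B})_{*}(X)$ is a well-defined morphism of sheaves on $M$, that it is $C^{\infty}_{M}$-linear, and that it preserves Lie brackets; each step uses only that $\pi_{B}$ is a surjective submersion.

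First I would record two basic facts. For open $\OO\subset M$ one has $\pi_{B}\big(\pi_{B}^{-1}(\OO)\big)=\OO$ by surjectivity, so the definition of $\XF_{\proj,B}$ guarantees that every $X\in\big((\pi_{B})_{!}\XF_{\proj,B}\big)(\OO)=\XF_{\proj,B}\big(\pi_{B}^{-1}(\OO)\big)$ comes with a field $(\pi_{B})_{*}(X)\in\XF_{M}(\OO)$ satisfying ${\rm d}\pi_{B}\circ X=(\pi_{B})_{*}(X)\circ\pi_{B}$. Moreover this field is \emph{unique}: if $Y_{1},Y_{2}$ both satisfy the relation then $Y_{1}(\pi_{B}(p))={\rm d}\pi_{B}(X(p))=Y_{2}(\pi_{B}(p))$ for all $p\in\pi_{B}^{-1}(\OO)$, and surjectivity of $\pi_{B}$ over $\OO$ forces $Y_{1}=Y_{2}$. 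Uniqueness then does most of the remaining bookkeeping: it gives compatibility with restrictions (for $\OO'\subset\OO$ the field $(\pi_{B})_{*}(X)|_{\OO'}$ satisfies the defining relation for $X|_{\pi_{B}^{-1}(\OO')}$, hence equals $(\pi_{B})_{*}\big(X|_{\pi_{B}^{-1}(\OO')}\big)$), so that $(\pi_{B})_{*}$ is a morphism of sheaves of sets, and it gives additivity from ${\rm d}\pi_{B}\circ(X+Y)=\big((\pi_{B})_{*}(X)+(\pi_{B})_{*}(Y)\big)\circ\pi_{B}$.

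Next I would set up the module structure. The pushforward $(\pi_{B})_{!}C^{\infty}_{\proj,B}$ is canonically isomorphic to $C^{\infty}_{M}$ via $f\mapsto f\circ\pi_{B}$, a ring isomorphism whose injectivity again uses surjectivity of $\pi_{B}$, so $(\pi_{B})_{!}\XF_{\proj,B}$ becomes a sheaf of $C^{\infty}_{M}$-modules with $f\cdot X:=(f\circ\pi_{B})X$. The one computation to carry out here is ${\rm d}\pi_{B}\circ\big((f\circ\pi_{B})X\big)=(f\circ\pi_{B})\big((\pi_{B})_{*}(X)\circ\pi_{B}\big)=\big(f\,(\pi_{B})_{*}(X)\big)\circ\pi_{B}$, which shows simultaneously that $(f\circ\pi_{B})X$ is projectable (so the module structure is well-defined) and, invoking uniqueness again, that $(\pi_{B})_{*}(f\cdot X)=f\,(\pi_{B})_{*}(X)$.

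Finally, for the Lie bracket I would appeal to the classical fact that $\pi_{B}$-relatedness of vector fields is preserved under Lie brackets: since $X$ is $\pi_{B}$-related to $(\pi_{B})_{*}(X)$ and $Y$ to $(\pi_{B})_{*}(Y)$, the bracket $[X,Y]$ is $\pi_{B}$-related to $[(\pi_{B})_{*}(X),(\pi_{B})_{*}(Y)]$; equivalently $[X,Y]$ is projectable and, by uniqueness, $(\pi_{B})_{*}([X,Y])=[(\pi_{B})_{*}(X),(\pi_{B})_{*}(Y)]$. I do not anticipate any real obstacle here: the construction is essentially forced once one notes that $\pi_{B}$ is a surjective submersion, the only external input being the standard bracket-of-related-fields lemma, and the only point needing a little care being the identification of the $C^{\infty}_{M}$-module structure on the pushforward sheaf together with the observation that multiplication by a function pulled back from $M$ preserves projectability.
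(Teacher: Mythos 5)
Your proposal is correct and follows essentially the same route as the paper's proof: identifying $C^{\infty}_{M}$ with $(\pi_{B})_{!}C^{\infty}_{\proj,B}$ via $f\mapsto f\circ\pi_{B}$ to get the module structure, checking $C^{\infty}_{M}$-linearity of the pushforward directly, and invoking the classical lemma that $\pi_{B}$-relatedness is preserved by Lie brackets. You simply spell out the uniqueness and restriction-compatibility bookkeeping that the paper leaves implicit.
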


\begin{proof}
Notice first that we have a canonical isomorphism $C^{\infty}_{M}\cong (\pi_{B})_{!}C^{\infty}_{\proj,B}$ of sheaves of rings, obtained simply by sending $f\in C^{\infty}_{M}(\OO)$ to $f\circ\pi_{B}\in C^{\infty}_{\proj,B}(\pi_{B}^{-1}(\OO))$ for each open set $\OO$ in $M$. In~this way the $C^{\infty}_{\proj,B}$-module structure of $\XF_{\proj,B}$ indeed defines a $C^{\infty}_{M}$-module structure on $(\pi_{B})_{!}\XF_{\proj,B}$. The~pushforward $(\pi_{B})_{*}$ of $X\in\XF_{\proj,B}(\pi_{B}^{-1}(\OO))$ to $(\pi_{B})_{*}(X)\in\XF_{M}(\OO)$ then clearly preserves the associated $C^{\infty}_{M}(\OO)$-module structure for each open set $\OO$, and it is well-known~\cite[Lemma~3.10]{natopdiffgeom} that it also preserves the Lie bracket of vector fields.
\end{proof}

Singularly foliated bundles are now defined by \textit{singular partial connections}, which are particularly well-behaved partially-defined right-inverses of the pushforward morphism.

\begin{Definition}\label{singfolbund}
A \textit{singularly foliated bundle} is a triple $(\pi_{B},\FF,\ell)$, where $\pi_{B}\colon B\rightarrow M$ is a fibre bundle, $\FF$ is a singular foliation of $M$, and $\ell\colon \FF\rightarrow(\pi_{B})_{!}\XF_{\proj,B}$ is a morphism of sheaves of $C^{\infty}_{M}$-modules which preserves the Lie bracket, and for which the following hold.
\begin{enumerate}\itemsep=0pt
\item The morphism $\ell$ is a \textit{partial right-inverse} to $(\pi_{B})_{*}$ in the sense that
\begin{gather*}
(\pi_{B})_{*}\circ\ell = \id_{\FF},
\end{gather*}
on the sheaf $\FF$. In~particular this implies that $\ell$ is injective.
\item The morphism $\ell$ is \textit{complete} in the sense that for any open set $\OO$ in $M$, any $X\in\FF(\OO)$ and any $x\in\OO$, if $\Fl^{X}(x)$ is defined on an interval $I\subset\RB$ then so too is $\Fl^{\ell(X)}(b)$ for any $b\in B_{x}$.
\item The morphism $\ell$ is \textit{smooth} in the sense that the induced morphism
\begin{gather*}
\Gamma_{\loc}(\FF)\rightarrow \Gamma_{\loc}\big((\pi_{B})_{!}\XF_{\proj,B}\big)
\end{gather*}
of diffeological spaces is smooth with respect to the diffeology of Proposition~\ref{functprop}.
\end{enumerate}
We refer to such a morphism $\ell$ as a \textit{singular partial connection}.
\end{Definition}

We will usually denote a singularly foliated bundle $(\pi_{B},\FF,\ell)$ by simply $\pi_{B}$, with $\FF$ and $\ell$ assumed unless otherwise stated. Before discussing some examples, let us mention that completeness of $\ell$ does \textit{not} automatically follow from $\ell$ being a partial right inverse to $(\pi_{B})_{*}$. Indeed, it is easy to verify that for any open set $\OO\subset M$ and for any $X\in\FF(\OO)$, we have the relationship
\begin{gather*}
\Fl^{X}_{t}(x) = \pi_{B}\big(\Fl^{\ell(X)}_{t}(b)\big),\qquad x\in\OO,\ b\in B_{x}
\end{gather*}
between the flows of $\Fl^{X}(x)$ and $\Fl^{\ell(X)}(b)$ wherever they are defined. In~particular, that $\ell$ is a partial right-inverse to $(\pi_{B})_{*}$ implies that for any $b\in B_{x}$, the domain of $\Fl^{\ell(X)}(b)$ is contained in the domain of $\Fl^{X}(x)$. The~converse, however, does \textit{not} follow without completeness of $\ell$, as is easily seen by considering the standard example of $B =\RB^{2}$, $M = \RB$, and with $\ell\colon \XF(M)\rightarrow\XF_{\proj,B}(B)$ defined by
\begin{gather*}
\ell\bigg(f\frac{\partial}{\partial x}\bigg)(x,y):=f(x)\bigg(\frac{\partial}{\partial x}+y^{2}\frac{\partial}{\partial y}\bigg)
\end{gather*}
for $f\in C^{\infty}(M)$ and $(x,y)\in B$.

\begin{Example}[trivial bundles]\label{triv}
If $(M,\FF)$ is any singularly foliated manifold and $Q$ is any other manifold, then the trivial bundle $\pi\colon M\times Q\rightarrow M$ is canonically a singularly foliated bundle. Indeed, with respect to the decomposition $T(M\times Q)\cong TM\times TQ$, one has the trivial lift $\ell\colon \XF_{M}\rightarrow\pi_{!}\XF_{\proj,M\times Q}$ defined by the formula
\begin{gather*}
\ell(X):=(\pi^{*}(X),0)\in\XF_{M\times Q}\big(\pi^{-1}(\OO)\big) = (\pi_{!}\XF_{M\times Q})(\OO)
\end{gather*}
for all open sets $\OO$ in $M$ and $X\in\XF_{M}(\OO)$. Restricting $\ell$ to the subsheaf $\FF$ of $\XF_{M}$ one obtains a singular partial connection, with both completeness and smoothness being trivial.
\end{Example}

\begin{Example}[regularly foliated bundles]\label{regex}
Suppose that $\pi_{B}\colon B\rightarrow M$ is a \textit{regularly} foliated bundle, in the sense of Kamber--Tondeur~\cite[Definition~2.1]{folbund}~-- that is, there exists an involutive subbundle $T\FF_{B}\subset TB$ which is projected fibrewise-injectively to a subbundle $T\FF_{M}$ of $TM$. Involutivity of $T\FF_{B}$ implies that both $T\FF_{B}$ and $T\FF_{M}$ integrate to regular foliations of $B$ and $M$ respectively. Now if $\OO$ is any open subset of $M$ and $X\in\FF_{M}(\OO)$, then one obtains $\ell(X)\in\XF_{\proj,B}\big(\pi_{B}^{-1}(\OO)\big)$ whose value at a point $b\in B$ is the unique vector in $T_{b}\FF_{B}$ that is mapped by ${\rm d}\pi_{B}$ to $X(\pi_{B}(b))$. Clearly then the resulting morphism $\ell\colon \FF_{M}\rightarrow(\pi_{B})_{!}\XF_{\proj,B}$ is a singular partial connection in the sense of Definition~\ref{singfolbund}. Completeness and smoothness can both be seen by choosing foliated coordinates, in which $\ell$ is simply given by a trivial lift as in~Example~\ref{triv}.

Conversely, suppose that $\FF$ is a regular foliation of a manifold $M$, with leaf dimension $p$, and that $\pi_{B}\colon B\rightarrow M$ is a fibre bundle with a singular partial connection $\ell\colon \FF\rightarrow(\pi_{B})_{!}\XF_{\proj,B}$. In~a foliated chart $\OO\cong\RB^{p}\times\RB^{q}$ of $M$, wherein $\FF(\OO)$ is the $C^{\infty}_{M}(\OO)$-span of vector fields $\{e_{1},\dots,e_{p}\}$ that form the standard frame field of $\RB^{p}$, injectivity of $\ell$ implies that the vector fields $\{\ell(e_{1}),\dots,\ell(e_{p})\}$ span a $p$-dimensional subspace of $T_{b}B$ at each point $b\in\pi_{B}^{-1}(\OO)$ which intersects the vertical tangent space at $b$ only through zero. One thus obtains a smooth $p$-dimensional distribution $T\FF_{B}$ in $B$, and involutivity of $\FF$ together with the fact that $\ell$ preserves the Lie bracket implies that $T\FF_{B}$ is involutive. Thus $\pi_{B}\colon B\rightarrow M$ is a foliated bundle in~the sense of Kamber--Tondeur.
\end{Example}

\begin{Example}[equivariant bundles]\label{equiex}
Let $\GG$ be a Lie groupoid with unit space $M$. Denote the Lie algebroid of $\GG$ by $A:=\ker({\rm d}s)|_{M}$, with anchor map ${\rm d}r\colon A\rightarrow TM$, and let $\FF$ denote the associated sheaf of vector fields of the form ${\rm d}r\circ\sigma$, where $\sigma$ is an element of the sheaf of sections~$\AA$ of the Lie algebroid $A$. Let $\pi_{B}\colon B\rightarrow M$ be a fibre bundle.

If $\GG$ acts on $B$, then denote by $B\rtimes\GG$ the associated action groupoid~\cite[Example 2.2]{aac1}, with range and source denoted $r_{B}$ and $s_{B}$ respectively. Then the Lie algebroid $A_{B}:=\ker({\rm d}s_{B})|_{B}$ associated with the action groupoid $B\rtimes\GG$ is isomorphic to $\pi_{B}^{*}(A)$. For any open subset $\OO$ of~$M$, the formula
\begin{gather*}
\tilde{\ell}({\rm d}r\circ\sigma):={\rm d}r_{B}\circ\pi_{B}^{*}(\sigma),\qquad \sigma\in\AA(\OO)
\end{gather*}
then defines a singular partial connection. Completeness and smoothness are consequences of~the fact that $\tilde{\ell}$ is defined in terms of a smooth action of the groupoid $\GG$.
\end{Example}

\subsection{The holonomy groupoids of singularly foliated bundles}

One of the key objects in our construction of the holonomy groupoids of a singularly foliated bundle are pseudo-bundles of \textit{invariant} germs/jets. To~define these pseudo-bundles, we need \textit{vertical} prolongations of projectable vector fields (cf.~\cite[Definition~1.15]{varbi}).

\begin{Definition}
Let $\pi_{B}\colon B\rightarrow M$ be a fibre bundle, let $X\in\XF_{\proj}(B)$, and let $k$ denote any of~the symbols $1,\dots,\infty,\g$. The~vector field $\vpf^{k}(X)$ on $\Gamma_{k}(\pi_{B})$ defined by
\begin{gather*}
\vpf^{k}(X)\big(x,[\sigma]^{k}_{x}\big):=\frac{\rm d}{{\rm d}t}\bigg|_{t=0}\Big(x,\big[\Fl^{X}_{t}\circ\sigma\circ\Fl^{(\pi_{B})_{*}(X)}_{-t}\big]_{x}^{k}\Big)
\end{gather*}
is called the \textit{vertical $k$-prolongation} of $X$. On $\Gamma_{0}(\pi_{B}) = B$, we set $\vpf^{0}(X):=0$.
\end{Definition}

For a projectable vector field $X$ on a fibre bundle $\pi_{B}\colon B\rightarrow M$ and $k\geq1$, the vertical $k$-prolongation $\vpf^{k}(X)$ is the image of $\pf^{k}(X)$ under the canonical $\big(\pi_{B}^{k-1,k}\big)^{*}\big(V\pi_{B}^{k-1}\big)$-valued contact form $\theta^{(k)}$~\cite[Chapter~6.3]{saunders} on $\Gamma_{k}(\pi_{B})$. In~local coordinates $(x^{i},f^{\alpha},f^{\alpha}_{i},\dots)$, the components of~$\theta^{(k)}$ are given by
\begin{gather*}
\big(\theta^{(k)}\big)^{\alpha}_{I} = {\rm d}f^{\alpha}_{I} - f^{\alpha}_{Ii}{\rm d}x^{i}
\end{gather*}
for each $|I|<k$. Thus it is easily checked (cf.~equation~\eqref{prolongation}) that in coordinates the vertical prolongation of $X = a^{i}\frac{\partial}{\partial x^{i}}+b^{\alpha}\frac{\partial}{\partial f^{\alpha}}$ is given by
\begin{gather*}
\vpf^{k}(X) = \sum_{|I|=0}^{k-1}D_{I}^{(k)}(b^{\alpha}-a^{i}f^{\alpha}_{i})\frac{\partial}{\partial f^{\alpha}_{I}}.
\end{gather*}
The invariant pseudo-bundles of a singularly foliated bundle are now defined as follows.

\begin{Definition}\label{FFinv}
Let $\pi_{B}\colon B\rightarrow M$ be a singularly foliated bundle, and let $k$ denote any of the symbols $0,1,\dots,\infty,\g$. An element $\big(x,[\sigma]^{k}_{x}\big)$ of $\pi_{B}$ is said to be \textit{$\FF$-invariant to order $k$} if
\begin{gather*}
\vpf^{k}\big(\ell(X)\big)\big(x,[\sigma]^{k}_{x}\big) = 0
\end{gather*}
for all $X\in\FF$ defined in a neighbourhood $x$. We~say that $\pi_{B}$ \textit{admits enough conservation laws to order $k$} if at each $x\in M$ there is $\big(x,[\sigma]^{k}_{x}\big)$ which is $\FF$-invariant. This being the case, we call the corresponding sub-pseudo-bundle $\Gamma_{k}(\pi_{B})^{\FF}\subset\Gamma_{k}(\pi_{B})$ consisting of invariant germs/jets the \textit{$k^{\rm th}$ order $\FF$-invariant pseudo-bundle} of $\pi_{B}$. We~denote by $\pi_{B}^{k,\FF}$ the restriction of $\pi_{B}^{k}$ to $\Gamma_{k}(\pi_{B})^{\FF}$.
\end{Definition}

Note that if a singularly foliated bundle admits enough conservation laws to order $k>0$, then it admits enough conservation laws to any $l\leq k$. Moreover, by definition, \textit{every} singularly foliated bundle admits enough conservation laws to order 0.

Recall now~\cite[Definition~2.10]{mac3} that if $\pi_{B}\colon B\rightarrow M$ is a \textit{regularly} foliated bundle, a locally-defined section $\sigma$ of~$\pi_{B}$ is said to be \textit{distinguished} if, about any point in its image, there exist foliated coordinates $(x_{\alpha},y_{\alpha},f_{\alpha})$, with $x_{\alpha}$ and $y_{\alpha}$ denoting the leafwise and transverse coordinates respectively in the base, and with $f_{\alpha}$ denoting coordinates in the fibre, with respect to which $\sigma = \sigma(y_{\alpha})$ is independent of~the leafwise coordinates. We~denote by $\DS_{\g}(\pi_{B})$ the diffeological bundle of~germs of~the sheaf of~distinguished sections, and by $\DS_{k}(\pi_{B})$ the bundle of~jets of~distinguished sections. The~next proposition says that the $\FF$-invariant pseudo-bundles of~Defi\-ni\-tion~\ref{FFinv} generalise the bundles of~distinguished sections appearing in the regular case, and that therefore our constructions recover those of~\cite{mac3} in the regular case. In~particular, since distinguished functions of~this sort furnish the (local) degree 0 characteristic cohomology classes of~regular foliations~\cite[Example 1]{bryantgriffiths1}, we feel justified in referring to the $\FF$-invariant germs/jets of~a singularly foliated bundle as its \textit{conservation laws}.

\begin{Proposition}%\label{reg}
Let $\pi_{B}\colon B\rightarrow M$ be a regularly foliated bundle, and let $k$ denote any of the symbols $0,\dots,\infty,\g$. Then the diffeological subspace $\Gamma_{k}(\pi_{B})^{\FF}$ coincides with the space $\DS_{k}(\pi_{B})$ of classes of distinguished sections.
\end{Proposition}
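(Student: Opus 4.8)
The plan is to reduce the statement to a single foliated chart, compute there the vertical prolongations of the lifts of the leafwise frame fields, and observe that both properties in question amount, in that chart, to the vanishing of all partial derivatives of order at most $k$ that differentiate in a leafwise direction. Since ``$\FF$-invariant to order $k$'' and ``jet (or germ) of a distinguished section'' are local conditions, I would fix $b\in B$ and choose foliated coordinates $(x^i,y^a,f^\alpha)$ for $\pi_B$ about $b$, with $x^1,\dots,x^p$ leafwise and $y^1,\dots,y^q$ transverse. Then $\OO\cong\RB^p\times\RB^q$ is a foliated chart of $M$ in which $\FF(\OO)$ is the $C^\infty_M(\OO)$-span of the leafwise frame $e_1,\dots,e_p$, and by Example~\ref{regex} the singular partial connection $\ell$ is there the trivial lift, so each $\ell(e_j)$ has coordinate expression $a^i=\delta^i_j$ on the leafwise block, $a^i=0$ on the transverse block, $b^\alpha=0$. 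Substituting this into the coordinate formula for $\vpf^k$ recorded above, and using $D^{(k)}_I f^\alpha_j=f^\alpha_{Ij}$, yields
\begin{gather*}
\vpf^k\big(\ell(e_j)\big)=-\sum_{|I|=0}^{k-1}f^\alpha_{Ij}\,\frac{\partial}{\partial f^\alpha_I},\qquad j=1,\dots,p,
\end{gather*}
for finite $k$.

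Next I would pass from the generators $e_j$ to an arbitrary $X=c^je_j\in\FF(\OO)$. Since $\ell(X)=\sum_jc^j\,\ell(e_j)$, the coordinate $b^\alpha-a^if^\alpha_i$ of $\ell(X)$ equals $-\sum_jc^jf^\alpha_j$, and expanding $D^{(k)}_I(c^jf^\alpha_j)$ by the Leibniz rule displays each summand as a base-derivative of $c^j$ times a factor $f^\alpha_{Jj}$ with $|J|\le k-1$ and $j$ leafwise; hence $\vpf^k(\ell(X))$ vanishes at any jet at which all the $\vpf^k(\ell(e_j))$ vanish. I would conclude that $\big(x,[\sigma]^k_x\big)$ is $\FF$-invariant to order $k$ if and only if $f^\alpha_{Ij}\big(x,[\sigma]^k_x\big)=0$ for all $\alpha$, all $j\in\{1,\dots,p\}$ and all $|I|\le k-1$, i.e.\ every partial derivative of each $\sigma^\alpha$ of order at most $k$ which differentiates at least once in a leafwise direction vanishes at $x$. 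The case $k=\infty$ follows by passing to the projective limit of the jet bundles, and the case $k=\g$ by the ``in particular'' clause of Proposition~\ref{tangentgerm}, which upgrades the pointwise vanishing to vanishing of those leafwise partials throughout a neighbourhood of $x$.

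It remains to identify the jets/germs satisfying this criterion with those of distinguished sections. If $\tau$ is distinguished then in a foliated chart adapted to $\tau$ each $\tau^\alpha$ depends only on the transverse coordinates, so all its leafwise partials vanish identically; the criterion being coordinate-free, $[\tau]^k_x$ is $\FF$-invariant to order $k$, and as this depends only on the jet/germ we get $\DS_k(\pi_B)\subseteq\Gamma_k(\pi_B)^\FF$. Conversely, given $\big(x,[\sigma]^k_x\big)\in\Gamma_k(\pi_B)^\FF$ and working in the fixed chart: for $k=\g$ the criterion already forces $\sigma$ to be leafwise-independent near $x$, hence distinguished; for $k\le\infty$, writing $x_0$ for the leafwise coordinate of $x$, the section $\tau$ with $\tau^\alpha(x,y):=\sigma^\alpha(x_0,y)$ is distinguished and satisfies $[\tau]^k_x=[\sigma]^k_x$, since the partials of order $\le k$ at $x$ involving a leafwise direction vanish for both (by invariance, resp.\ by construction) while the purely transverse ones agree by definition of $\tau$. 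Thus $\Gamma_k(\pi_B)^\FF\subseteq\DS_k(\pi_B)$, with both sides equal to $B$ when $k=0$. Finally, $\Gamma_k(\pi_B)^\FF$ carries the subspace diffeology from $\Gamma_k(\pi_B)$ by definition, and the same holds for $\DS_k(\pi_B)$ via Proposition~\ref{relsheaf1} applied to the inclusion of the sheaf of distinguished sections into the sheaf of all sections, so this set-theoretic equality is an equality of diffeological spaces.

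The main obstacle I anticipate is the reduction from the leafwise frame $e_1,\dots,e_p$ to arbitrary elements of $\FF(\OO)$: because $\vpf^k$ is only $\RB$-linear rather than $C^\infty_M$-linear, one has to verify that every correction term produced by the Leibniz rule genuinely vanishes when evaluated at a jet invariant under the generators. The construction of an honest distinguished representative of a given invariant $k$-jet for finite $k$ is the other point requiring care; the remainder is bookkeeping with the prolongation formula and with foliated charts.
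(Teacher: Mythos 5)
Your proof is correct, and it lands in the same place as the paper's --- reduce to a foliated chart where, by Example~\ref{regex}, $\ell$ is the trivial lift --- but the mechanics differ in a way worth noting. The paper \emph{integrates}: it writes $\Fl^{\ell(X)}_{t}(x_{\alpha},y_{\alpha},f_{\alpha}) = \big(\Fl^{X}_{t}(x_{\alpha}),y_{\alpha},f_{\alpha}\big)$ for an arbitrary $X = a^{i}\partial/\partial x_{\alpha}^{i}$ in $\FF(\OO)$, so that the curve whose derivative defines $\vpf^{k}(\ell(X))$ is $t\mapsto\big(x_{\alpha},y_{\alpha},\sigma\big(\Fl^{X}_{t}(x_{\alpha}),y_{\alpha}\big)\big)$, and reads off that constancy for all $X$ is equivalent to leafwise-constancy of $\sigma$; this handles all of $\FF(\OO)$ at once, with no need for your frame-field reduction, and the finite-$k$ cases are dispatched in one sentence by extending an invariant jet to an invariant section. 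You instead \emph{differentiate}: you substitute into the coordinate formula for $\vpf^{k}$, which is why you must then pay for the failure of $C^{\infty}_{M}$-linearity with the Leibniz argument reducing arbitrary $X=c^{j}e_{j}$ to the frame fields --- a step you carry out correctly, since every term of $D^{(k)}_{I}(c^{j}f^{\alpha}_{j})$ retains a factor $f^{\alpha}_{Jj}$ with $j$ leafwise and $|J|\leq k-1$. What your route buys is an explicit characterisation of the invariant $k$-jets (vanishing of all leafwise partials of order $\leq k$) and an honest construction $\tau^{\alpha}(x,y):=\sigma^{\alpha}(x_{0},y)$ of a distinguished representative, which makes the finite-$k$ statement genuinely proved rather than asserted; your appeal to the ``in particular'' clause of Proposition~\ref{tangentgerm} for $k=\g$ is exactly the right substitute for the paper's constancy-of-the-curve-of-germs argument. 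One small caveat: your closing remark on diffeologies only establishes (via Proposition~\ref{relsheaf1}) that the identification is smooth in one direction, not that the two diffeologies coincide; but the paper's own proof is purely set-theoretic on this point, so you are if anything being more careful than the source.
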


\begin{proof}
In foliated coordinates $(x_{\alpha},y_{\alpha},f_{\alpha})$ for $B$, corresponding to leafwise, transverse and fibre coordinates respectively, any element $X\in\FF$ is given by some $C^{\infty}_{M}$-linear combination
\begin{gather*}
X = a^{i}\frac{\partial}{\partial x_{\alpha}^{i}},
\end{gather*}
while $\ell(X)$ (see Example~\ref{regex}) is given by
\begin{gather*}
\ell(X) = \pi_{B}^{*}(a^{i})\frac{\partial}{\partial x_{\alpha}^{i}}.
\end{gather*}
Thus, in our coordinates $(x_{\alpha},y_{\alpha},f_{\alpha})\in\RB^{p}\times\RB^{q}\times\RB^{k}$, we have simply
\begin{gather*}
\Fl^{\ell(X)}_{t}(x_{\alpha},y_{\alpha},f_{\alpha}) = \big(\Fl^{X}_{t}(x_{\alpha}),y_{\alpha},f_{\alpha}\big)
\end{gather*}
for small $t$. It follows immediately that for any smooth function $\sigma\colon \RB^{p}\times\RB^{q}\rightarrow\RB^{k}$, the curve
\begin{gather*}
t\mapsto\big(\Fl^{\ell(X)}_{t}\circ(\id\times\sigma)\circ\Fl^{X}_{-t}\big)(x_{\alpha},y_{\alpha}) = \big(x_{\alpha},y_{\alpha},\sigma\big(\Fl^{X}_{t}(x_{\alpha}),y_{\alpha}\big)\big)
\end{gather*}
is constant in $t$ \textit{for all $X\in\FF$} if and only if $\sigma$ is constant in the $x$ coordinate. Thus $\Gamma_{\g}(\pi_{B})^{\FF} = \DS_{\g}(\pi_{B})$. Since foliated coordinates can always be used to extend an invariant jet to an invariant local section, one has $\Gamma_{k}(\pi_{B})^{\FF} = \DS_{k}(\pi_{B})$ for $k\leq\infty$ also.
\end{proof}

Defining lifting maps and leafwise transport functors for a singularly foliated bundle is now a simple matter of putting our definitions together.

\begin{Theorem}\label{liftsmooth}
Let $\pi_{B}\colon B\rightarrow M$ be a singularly foliated bundle. Let $k$ denote any of the symbols $0,\dots,\infty,\g$, and suppose that $\pi_{B}$ admits enough conservation laws to order $k$. Then for each $\big(\gamma,[X]^{\g},d\big)\in\PP(\FF)$, $\gamma(0) = x$, and for each $\big(x,[\sigma]^{k}_{x}\big)\in\Gamma_{k}(\pi_{B})^{\FF}_{x}$, the map
\begin{gather}\label{ivpsol}
t\mapsto\big(\gamma(t),\big[\Fl^{\ell(X)}_{t,0}\circ\sigma\circ\Fl^{X}_{0,t}\big]^{k}_{\gamma(t)}\big)
\end{gather}
is the unique solution to the initial value problem
\begin{gather}\label{ivp}
\frac{\rm d}{{\rm d}s}\bigg|_{t}f\big(s;x,[\sigma]^{k}_{x}\big) = \pf^{k}\big(\ell(X(t))\big)\big(f\big(t;x,[\sigma]^{k}_{x}\big)\big),\qquad
f\big(0;x,[\sigma]^{k}_{x}\big) = \big(x,[\sigma]^{k}_{x}\big)
\end{gather}
in the diffeological space $\Gamma_{k}(\pi_{B})^{\FF}$ for which $\pi_{B}^{k,\FF}\circ f\big({-};x,[\sigma]^{k}_{x}\big) = \gamma(-)$. Moreover, the lifting map $L\big(\pi_{B}^{k,\FF}\big)\colon \PP(\FF)\times_{s,\pi_{B}^{k,\FF}}\Gamma_{k}(\pi_{B})^{\FF}\rightarrow \PP\big(\Gamma_{k}(\pi_{B})^{\FF}\big)$ defined by
\begin{gather}\label{Leqn}
L\big(\pi_{B}^{k,\FF}\big)\big(\gamma,[X]^{\g},d;x,[\sigma]^{k}_{x}\big)(t):=
\big(\gamma(t),\big[\Fl^{\ell(X)}_{t,0}\circ\sigma\circ\Fl^{X}_{0,t}\big]^{k}_{\gamma(t)}\big),\qquad t\in[0,\infty)
\end{gather}
is smooth.
\end{Theorem}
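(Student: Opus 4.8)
The plan is to verify the four assertions in turn: that the curve \eqref{ivpsol} takes values in $\Gamma_{k}(\pi_{B})^{\FF}$, that it solves the initial value problem \eqref{ivp} over $\gamma$, that it is the unique such solution, and finally that \eqref{Leqn} is a smooth map of diffeological spaces. Two standing remarks organise the computations. First, completeness of $\ell$ (Definition~\ref{singfolbund}(2)) guarantees that every flow $\Fl^{\ell(X)}_{t,0}$ below is defined wherever $\Fl^{X}_{t,0}$ is, in particular on all of $[0,d]$ along $\gamma$. Second, $(\pi_{B})_{*}\ell(X)=X$ gives $\pi_{B}\big(\Fl^{\ell(X)}_{t,0}(b)\big)=\Fl^{X}_{t,0}(\pi_{B}(b))$, so that $\Fl^{\ell(X)}_{t,0}\circ\sigma\circ\Fl^{X}_{0,t}$ is genuinely a local section of $\pi_{B}$ near $\gamma(t)$ and \eqref{ivpsol} at least lands in $\Gamma_{k}(\pi_{B})$.

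The substantive algebraic input is that $\ell$ is equivariant under the flows of its own lifts: since $\ell$ preserves Lie brackets and $\FF$ is stable under the flows of its vector fields,
\begin{gather*}
\big(\Fl^{\ell(X)}_{t,0}\big)_{*}\ell(Y)=\ell\big(\big(\Fl^{X}_{t,0}\big)_{*}Y\big),\qquad X,Y\in\FF .
\end{gather*}
I would prove this by differentiating $t\mapsto\big(\Fl^{\ell(X)}_{0,t}\big)_{*}\ell\big(\big(\Fl^{X}_{t,0}\big)_{*}Y\big)$ in $t$, pushing the time derivative through $\ell$ using its $C^{\infty}_{M}$-linearity and smoothness (Definition~\ref{singfolbund}(3)) and the identity $\ell([X,W])=[\ell(X),\ell(W)]$; the two Lie-bracket contributions cancel, so the curve is constant, equal to $\ell(Y)$ at $t=0$. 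Granting this, invariance of \eqref{ivpsol} follows: with $\tau_{t}:=\Fl^{\ell(X)}_{t,0}\circ\sigma\circ\Fl^{X}_{0,t}$ and $Y\in\FF$ near $\gamma(t)$, one conjugates $\Fl^{\ell(Y)}_{\epsilon}$ and $\Fl^{Y}_{-\epsilon}$ through $\Fl^{\ell(X)}_{t,0}$ and $\Fl^{X}_{0,t}$ by the equivariance identity, reducing $\vpf^{k}(\ell(Y))\big(\gamma(t),[\tau_{t}]^{k}_{\gamma(t)}\big)$ to the pushforward, along a diffeomorphism, of $\vpf^{k}\big(\ell\big(\big(\Fl^{X}_{0,t}\big)_{*}Y\big)\big)\big(x,[\sigma]^{k}_{x}\big)$, which vanishes because $\big(\Fl^{X}_{0,t}\big)_{*}Y\in\FF$ near $x$ and $\big(x,[\sigma]^{k}_{x}\big)$ is $\FF$-invariant. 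That \eqref{ivpsol} then solves \eqref{ivp} is the cocycle property $\Fl^{\ell(X)}_{s,0}=\Fl^{\ell(X)}_{s,t}\circ\Fl^{\ell(X)}_{t,0}$, $\Fl^{X}_{0,s}=\Fl^{X}_{0,t}\circ\Fl^{X}_{t,s}$: differentiating at $s=t$ and matching with the definition of the prolongation $\pf^{k}$ of the frozen field $\ell(X(t))$ (using $(\pi_{B})_{*}\ell(X(t))=X(t)$ and $\dot\gamma(t)=X(t)(\gamma(t))$) reproduces the right-hand side of \eqref{ivp}, while the initial condition and $\pi_{B}^{k,\FF}\circ f=\gamma$ for this $f$ are immediate.

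For uniqueness I would use a ``flow-back'' argument uniform in $k$. Let $g(s)=\big(\gamma(s),[\eta_{s}]^{k}_{\gamma(s)}\big)$ be any solution of \eqref{ivp} with $\pi_{B}^{k,\FF}\circ g=\gamma$, and set $\psi(s):=\big(x,\big[\Fl^{\ell(X)}_{0,s}\circ\eta_{s}\circ\Fl^{X}_{s,0}\big]^{k}_{x}\big)$, which by the remarks above is a curve in the single fibre $\Gamma_{k}(\pi_{B})_{x}$ with $\psi(0)=\big(x,[\sigma]^{k}_{x}\big)$. Differentiating the composition in $s$ — using $\tfrac{{\rm d}}{{\rm d}s}\Fl^{\ell(X)}_{0,s}(b)=-{\rm d}\Fl^{\ell(X)}_{0,s}\big(\ell(X(s))(b)\big)$, the equation satisfied by $g$, and, in the case $k=\g$, the identification of tangent vectors to $\Gamma_{\g}(\pi_{B})$ of Proposition~\ref{tangentgerm} — shows $\tfrac{{\rm d}}{{\rm d}s}\psi(s)=0$ for all $s$. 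For finite $k$ this makes $\psi$ constant outright; for $k=\g$ it makes the germ locally constant in $s$, and a compactness argument on $[0,d]$ upgrades this to $\psi\equiv\psi(0)$. Conjugating back by $\Fl^{\ell(X)}_{s,0}$ forces $[\eta_{s}]^{k}_{\gamma(s)}=\big[\Fl^{\ell(X)}_{s,0}\circ\sigma\circ\Fl^{X}_{0,s}\big]^{k}_{\gamma(s)}$, i.e.\ $g$ coincides with \eqref{ivpsol}; the case $k=\infty$ also follows from the finite cases via the projective limit.

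Finally, smoothness of $L\big(\pi_{B}^{k,\FF}\big)$ is obtained by unwinding the diffeologies of the fibred product, of $\PP$, and of $\Gamma_{k}(\pi_{B})^{\FF}$. A plot of $\PP(\FF)\times_{s,\pi_{B}^{k,\FF}}\Gamma_{k}(\pi_{B})^{\FF}$ is a smooth family $u\mapsto\big(\gamma_{u},[X_{u}]^{\g},d_{u};x_{u},[\sigma_{u}]^{k}_{x_{u}}\big)$ with $x_{u}=\gamma_{u}(0)$, and one must see that $(u,t)\mapsto\big(\gamma_{u}(t),\big[\Fl^{\ell(X_{u})}_{t,0}\circ\sigma_{u}\circ\Fl^{X_{u}}_{0,t}\big]^{k}_{\gamma_{u}(t)}\big)$ is a plot of $\PP\big(\Gamma_{k}(\pi_{B})^{\FF}\big)$. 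Smoothness of this parametrisation into $\Gamma_{k}(\pi_{B})$ follows from smoothness of $\gamma_{u}(t)$ in $(u,t)$, from smooth dependence of solutions of ordinary differential equations on parameters and initial data applied to $\Fl^{\ell(X_{u})}$ and $\Fl^{X_{u}}$ — which are smooth families of time-dependent vector fields precisely because $\ell$ is smooth in the sense of Definition~\ref{singfolbund}(3) and complete, completeness guaranteeing the flows are defined on the needed intervals — from smoothness of $u\mapsto\sigma_{u}$, and from smoothness of jet/germ extraction; the image lies in $\Gamma_{k}(\pi_{B})^{\FF}$ by the first part, so the subspace diffeology makes it a plot there. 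Membership in $\PP$ is the sitting-instants condition: near $t=0$ and on $[d_{u},\infty)$ the path $\big(\gamma_{u},[X_{u}]^{\g},d_{u}\big)\in\PP(\Gamma_{\g}(\FF))$ is constant, so $X_{u}(s)$ vanishes on a neighbourhood of $\gamma_{u}(0)$ resp.\ $\gamma_{u}(d_{u})$ there; by $C^{\infty}_{M}$-linearity of $\ell$ the lift vanishes on a neighbourhood of the corresponding fibre, the flows are germinally stationary, and \eqref{ivpsol} is constant on those intervals. The main obstacle is the first part — establishing flow-equivariance of $\ell$ and propagating $\FF$-invariance along \eqref{ivpsol}; once that is secured, identifying the solution, proving its uniqueness, and checking diffeological smoothness are essentially bookkeeping with flows and the definitions of Section~\ref{sec2}.
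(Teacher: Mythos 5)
Your proposal is correct and follows essentially the same route as the paper: the same flow-equivariance identity $\big(\Fl^{\ell(X)}_{0,t}\big)_{*}\ell(Y)=\ell\big(\big(\Fl^{X}_{0,t}\big)_{*}Y\big)$ is the key lemma for propagating $\FF$-invariance, uniqueness for $k=\g$ rests on Proposition~\ref{tangentgerm} plus compactness of $[0,d]$ (you merely reorganise this by flowing back to the fibre over $x$ before differentiating, where the paper differentiates the coordinate difference directly), and smoothness of $L$ is the same unwinding of the diffeologies via smooth parameter-dependence of flows, which the paper isolates as Lemma~\ref{technicallemma}.
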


Note that the expression on the right hand side of equation~\eqref{Leqn} only makes sense by the completeness assumption on the singular partial connection. To~prove Theorem~\ref{liftsmooth}, we require the following lemma.

\begin{Lemma}\label{technicallemma}
Let $(M,\FF)$ be a singular foliation. Suppose that $U$ is an open set in Euclidean space and $X\colon \RB_{\geq0}\times U\rightarrow\Gamma_{\loc}(\FF)$ and $\gamma\colon \RB_{\geq0}\times U\rightarrow M$ are smooth maps for which
\begin{enumerate}\itemsep=0pt
\item[$(1)$] $\dom(X(t,u)) = \dom(X(s,u))$ for all $s,t\in\RB_{\geq0}$ and $u\in U$, and for which
\item[$(2)$] $t\mapsto\gamma(t,u)$ is an integral curve of $X(t,u)$ for all $(t,u)\in\RB_{\geq0}\times U$. That is,
\begin{gather*}
\frac{\rm d}{{\rm d}s}\bigg|_{s=t}\gamma(s,u) = X(t,u)(\gamma(t,u))
\end{gather*}
for all $(t,u)\in\RB_{\geq0}\times U$.
\end{enumerate}
Then for each $t_{0}\in\RB_{\geq0}$ and $u_{0}\in U$, there exist $\epsilon>0$ and open neighbourhoods $\UU\ni u_{0}$ in $U$ and $\OO\ni x_{0}:=\gamma(0,u_{0})$ in $M$, such that $\OO\subset\dom\big(\Fl^{X(u)}_{t,0}\big)$ for all $(t,u)\in[0,t_{0}+\epsilon)\times\UU$, and for which the map
\begin{gather*}
[0,t_{0}+\epsilon)\times\UU\times\OO\ni(t,u,x)\mapsto\Fl^{X(u)}_{t,0}(x)\in M
\end{gather*}
is smooth.
\end{Lemma}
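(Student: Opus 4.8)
The plan is to construct the flow of the time-dependent, $u$-parametrised vector field along the reference curve $\gamma(\cdot,u_0)$ in finitely many steps, covering a compact time interval by flow boxes on which the classical theorem on existence, uniqueness and smooth dependence of solutions of ODEs applies, and then patching these together. The structural feature that makes a \emph{uniform} time of existence available is the hypothesis that $\gamma(\cdot,u_0)$ is defined on all of $\RB_{\geq0}$, hence on some enlarged closed interval $[0,t_0+\delta]$: this guarantees that the reference integral curve neither escapes $M$ nor leaves the domain $\dom(X(\cdot,u_0))$ before time $t_0$.

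First I would extract flow boxes. By hypothesis $(2)$ one has $\gamma(t,u_0)\in\dom(X(t,u_0))$ for every $t$, so the compact set $K:=\gamma([0,t_0+\delta],u_0)$ is covered by these domains. For each $s\in[0,t_0+\delta]$, smoothness of $X$ in the functional diffeology (Definition~\ref{funl}), applied at the parameter value $(s,u_0)$ and the point $\gamma(s,u_0)$, supplies an open interval $I_s\ni s$, an open neighbourhood $V_s\ni u_0$ in $U$, and an open neighbourhood $W_s\ni\gamma(s,u_0)$ in $M$ such that $W_s\subset\dom(X(t,u))$ for all $(t,u)\in I_s\times V_s$ --- here hypothesis $(1)$ is used, the domain depending only on $u$ --- and such that $(t,u,x)\mapsto X(t,u)(x)$ is a smooth $TM$-valued map on $I_s\times V_s\times W_s$; after shrinking $I_s$ I may also assume $\gamma(t,u_0)\in W_s$ for all $t\in I_s$. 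A compactness argument then produces a partition $0=\tau_0<\tau_1<\dots<\tau_m$ with $\tau_m>t_0$ such that every $[\tau_{j-1},\tau_j]$ lies in one of the intervals $I_s$; write $V_j,W_j$ for the corresponding neighbourhoods.

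The core is then an induction on $j$: I would show there are open neighbourhoods $\UU_j\ni u_0$ in $U$ and $\OO_j\ni x_0:=\gamma(0,u_0)$ in $M$ such that $\OO_j\subset\dom(\Fl^{X(u)}_{t,0})$ for $(t,u)\in[0,\tau_j]\times\UU_j$, the map $(t,u,x)\mapsto\Fl^{X(u)}_{t,0}(x)$ is smooth on $[0,\tau_j]\times\UU_j\times\OO_j$, and $\Fl^{X(u_0)}_{\tau_j,0}(x_0)=\gamma(\tau_j,u_0)$. The case $j=0$ is immediate. For the step from $j-1$ to $j$: the point $\Fl^{X(u_0)}_{\tau_{j-1},0}(x_0)=\gamma(\tau_{j-1},u_0)$ lies in the open set $W_j$, so by continuity of the smooth map constructed at stage $j-1$ there are smaller neighbourhoods of $u_0$ and $x_0$ whose image under $(u,x)\mapsto\Fl^{X(u)}_{\tau_{j-1},0}(x)$ lies in $W_j$; on $W_j$ the classical smooth-dependence theorem for the ODE $\dot y=X(t,u)(y)$ with initial time $\tau_{j-1}$ applies over the compact interval $[\tau_{j-1},\tau_j]$ --- solutions persist there for initial data near $\gamma(\tau_{j-1},u_0)$ and $u$ near $u_0$ precisely because $\gamma(\cdot,u_0)$ restricts to an integral curve lying in $W_j$ --- yielding a smooth local flow $\Fl^{X(u)}_{t,\tau_{j-1}}$. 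Composing via $\Fl^{X(u)}_{t,0}=\Fl^{X(u)}_{t,\tau_{j-1}}\circ\Fl^{X(u)}_{\tau_{j-1},0}$ for $t\in[\tau_{j-1},\tau_j]$ and intersecting the relevant neighbourhoods defines $\UU_j$ and $\OO_j$. Taking $j=m$, setting $\UU:=\UU_m$, $\OO:=\OO_m$, and choosing $0<\epsilon\le\tau_m-t_0$ gives the statement.

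I expect the main obstacle to be the bookkeeping in this induction: one must shrink neighbourhoods simultaneously in the parameter $u$ and the initial point $x$ at every stage while ensuring that the already-constructed flow sends $\UU_j\times\OO_j$ into the next flow box $W_{j+1}$, and one must invoke the parametrised, time-dependent form of the ODE smooth-dependence theorem --- obtained from the autonomous, unparametrised version by appending the trivial equations $\dot u=0$ and $\dot s=1$. No single ingredient is deep, but the patching must be carried out carefully enough that the final neighbourhoods $\UU,\OO$ and the interval $[0,t_0+\epsilon)$ are honestly uniform.
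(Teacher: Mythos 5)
Your proposal is correct, and it rests on the same structural observation as the paper's proof: the reference integral curve $t\mapsto\gamma(t,u_0)$ is defined on all of $\RB_{\geq0}$, so a uniform time of existence and smooth dependence on $(t,u,x)$ can be extracted near $(u_0,x_0)$. The difference is in how the global-in-time statement is assembled from local data. The paper uses property \textit{funct} of the diffeology on $\Gamma_{\loc}(\FF)$ together with hypothesis $(1)$ to manufacture a single auxiliary manifold $\UU'\times\OO'$ with $\OO'$ containing the whole curve $\gamma(\RB_{\geq0}\times\{u_0\})$ and on which $(t,u,x)\mapsto X(t,u)(x)$ is smooth, regards $X$ as one time-dependent vector field on $\UU'\times\OO'$, and then simply cites the standard theorem (\cite[Theorem~9.48]{leesmth}) that the maximal flow domain of such a field is open and the flow is smooth there; openness plus global existence of the reference solution yields $\epsilon$, $\UU$, $\OO$ in one stroke. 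You instead unpack that standard theorem: you cover the compact time interval by flow boxes $I_s\times V_s\times W_s$ supplied by the diffeology, and re-prove openness of the flow domain by an inductive concatenation $\Fl^{X(u)}_{t,\tau_{j-1}}\circ\Fl^{X(u)}_{\tau_{j-1},0}$ over a partition. Your route is more elementary and self-contained, at the price of the bookkeeping you yourself flag; the paper's route is shorter but leans on the quoted global flow theorem and, implicitly, on the same compactness argument to arrange that $\OO'$ contains the relevant compact piece of the curve. Both are valid proofs of the lemma.
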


\begin{proof}
By the definition of the diffeology on $\Gamma_{\loc}(\FF)$ and hypothesis 1, we can find open sets $\UU'\ni u_{0}$ and $\OO'\ni x_{0}$ such that $\OO'\subset\dom(X(t,u))$ for all $(t,u)\in\RB_{\geq0}\times\UU'$, and on which
\begin{gather*}
\RB_{\geq0}\times \UU'\times\OO'\ni(t,u,x)\mapsto X(t,u)(x)\in TM
\end{gather*}
is smooth. By hypothesis 2 we can assume that $\OO'$ contains $\gamma(\RB_{\geq0}\times\{u_{0}\})$. Now $X$ may be regarded as a time-dependent vector field on the manifold $\UU'\times\OO'$, and then by standard theory~\cite[Theorem~9.48]{leesmth}, there exists some maximal open neighbourhood $\NN\subset\UU'\times\OO'$ of~$(u_{0},x_{0})$ on which $t\mapsto\Fl^{X(u)}_{t,0}(x)$ is defined for all $u\in\UU$, $x\in\OO$ and for small $t$. Since by hypothesis $t\mapsto\Fl^{X(u_{0})}_{t,0}(x_{0}) = \gamma(t,u_{0})$ is defined for all $t\in\RB_{\geq0}$ we can always choose $\epsilon>0$, $\UU\ni u_{0}$ and $\OO\ni x_{0}$ small enough that $(t,u,x)\mapsto\Fl^{X(u)}_{t,0}(x)$ is well-defined and smooth on $[0,t_{0}+\epsilon)\times\UU\times\OO$ as claimed.
\end{proof}

\begin{proof}[Proof of Theorem~\ref{liftsmooth}]
That equation~\eqref{ivpsol} defines a curve in $\Gamma_{k}(\pi_{B})^{\FF}$ follows from the inva\-riance of $\FF$ under its own (possibly time-dependent~\cite[Lemma~3.3]{VilGar1}) flows~\cite[Proposition~1.6]{iakovos2}, the $\FF$-invariance of $\sigma$, and that $\ell$ is bracket preserving. More precisely, given $t\in[0,\infty)$, an open neighbourhood $\OO$ of $\gamma(t)$ and $Y\in\FF(\OO)$, since $\ell$ is bracket preserving we have
\begin{gather}\label{bracketpres}
\ell([X(t),Y])\big(\Fl^{\ell(X)}_{t,0}(b)\big) = [\ell(X(t)),\ell(Y)]\big(\Fl^{\ell(X)}_{t,0}(b)\big)
\end{gather}
for all $b\in\Fl^{\ell(X)}_{0,t}(\OO)$. Now, for any such $b$, with $\pi_{B}(b) = x'$, the right hand side of equation~\eqref{bracketpres} is the lift by $\ell$ of the tangent to the curve $r\mapsto\big(\Fl^{X}_{0,t+r}\big)_{*}\big(Y\big(\Fl_{t+r,0}^{X}(x')\big)\big)$ at $r = 0$,
while the left hand side is the tangent to the curve $r\mapsto \big(\Fl^{\ell(X)}_{0,t+r}\big)_{*}\big(\ell(Y)\big(\Fl^{\ell(X)}_{t+r,0}(b)\big)\big)$ at $r=0$. By uniqueness of flows therefore, we have
\begin{gather}\label{ellhom}
\big(\Fl^{\ell(X)}_{0,t}\big)_{*}(\ell(Y)) = \ell\big(\big(\Fl^{X}_{0,t}\big)_{*}(Y)\big).
\end{gather}
For notational simplicity denote $\varphi:=\Fl^{X}_{t,0}$ and $\ell(\varphi):=\Fl^{\ell(X)}_{t,0}$. Then we use equation~\eqref{ellhom} to compute
\begin{align*}
\vpf^{k}(\ell(Y))\big(\gamma(t),[\ell(\varphi)\!\circ\!\sigma\!\circ\!\varphi^{-1}]^{k}_{\gamma(t)}\big)
&= \frac{\rm d}{{\rm d}s}\bigg|_{s=0}\!\!\big(\gamma(t),\big[\Fl^{\ell(Y)}_{s}\circ\ell(\varphi) \circ\sigma\circ\varphi^{-1}\circ\Fl^{Y}_{-s}\big]^{k}_{\gamma(t)}\big)
\\
&= \frac{\rm d}{{\rm d}s}\bigg|_{s=0}\!\!\big(\gamma(t),\!\big[\ell(\varphi) \!\circ\Fl^{\ell(\varphi^{-1}_{*}(Y))}_{s}\! \circ\sigma\circ\Fl^{\varphi^{-1}_{*}(Y)}_{-s} \circ\varphi^{-1}\big]^{k}_{\gamma(t)}\big)
\\
&= 0
\end{align*}
by the $\FF$-invariance of $\sigma$ and since $\varphi^{-1}_{*}(Y)\in\FF$. Therefore equation~\eqref{ivpsol} does indeed define a curve in $\Gamma_{k}(\pi_{B})^{\FF}$. That equation~\eqref{ivpsol} defines a solution to the initial value problem of~equation~\eqref{ivp} follows by definition of the prolongation.

Uniqueness for $k$ finite follows from uniqueness of the flow in a manifold (since each $\Gamma_{k}(\pi_{B})^{\FF}$ is a subset of the jet manifold $\Gamma_{k}(\pi_{B})$), while for $k=\infty$ any solution to equation~\eqref{ivp} is the projective limit of the solutions for finite $k$, so uniqueness in this case follows from uniqueness for each finite $k$. Since $\Gamma_{\g}(\pi_{B})$ is neither a manifold nor a projective limit of manifolds, the argument is more subtle in the $k=\g$ case. Suppose that $\rho\colon [0,d]\rightarrow\Gamma_{\g}(\pi_{B})^{\FF}$ is some other solution to the initial value problem given in equation~\eqref{ivp}; thus in particular $\rho(0) = \big(x,[\sigma]^{\g}_{x}\big)$. Let us consider $\epsilon>0$ sufficiently small that for all $t\in[0,\epsilon)$ we can write $\rho(t) = \big(\gamma(t),[\sigma_{t}]^{\g}_{\gamma(t)}\big)$ for some $\FF$-invariant family $t\mapsto\sigma_{t}$ in $\Gamma_{\loc}(\pi_{B})$. By definition of the diffeology on $\Gamma_{\loc}(\pi_{B})$ we may assume that $\epsilon$ is sufficiently small that there is some open neighbourhood $\OO$ of $\gamma(0)$ containing $\gamma([0,\epsilon))$ and on which $\sigma_{t}$ is defined for all $t\in[0,\epsilon)$. Let us also assume that there are fibre bundle coordinates $\big(x^{1},\dots,x^{n};b^{1},\dots,b^{m}\big)$ on $B$ about $\sigma(x)$, the projection of whose domain to~$M$ contains $\OO$. By hypothesis we have\vspace{-1ex}
\begin{gather*}
\frac{\rm d}{{\rm d}s}\bigg|_{t}[\sigma_{s}]^{\g}_{\gamma(s)} = \frac{\rm d}{{\rm d}s}\bigg|_{t}\big[\Fl^{\ell(X)}_{s,0}\circ\sigma\circ\Fl^{X}_{0,s}\big]_{\gamma(s)}
\end{gather*}
for all $t\in[0,\epsilon)$, which by Proposition~\ref{tangentgerm} implies the coordinate expression\vspace{-1ex}
\begin{gather*}
\frac{\rm d}{{\rm d}s}\bigg|_{t}\big(\sigma^{i}_{s}(x^{1},\dots,x^{n})- \big(\Fl^{\ell(X)}_{s,0}\circ\sigma\circ\Fl^{X}_{0,s}\big)^{i}\big(x^{1},\dots,x^{n}\big)\big) = 0.
\end{gather*}
for $i=1,\dots,m$ and for $\big(x^{1},\dots,x^{n}\big)\in\OO$. This being true for all $t\in[0,\epsilon)$, it follows that the difference $\sigma^{i}_{t}-\big(\Fl^{\ell(X)}_{t,0}\circ\sigma\circ\Fl^{X}_{0,t}\big)^{i}$ is constant in $t$ on $\OO$ for all $i$, and since $\sigma^{i}_{0} = \sigma^{i}$ on $\OO$ we obtain $[\sigma_{t}]^{\g}_{\gamma(t)} = \big[\Fl^{\ell(X)}_{t,0}\circ\sigma\circ\Fl^{X}_{0,t}\big]^{\g}_{\gamma(t)}$ for all $t\in[0,\epsilon)$. Uniqueness on $[0,d]$ now follows from the compactness of $[0,d]$.

It remains only to show smoothness of $L\big(\pi_{B}^{k,\FF}\big)$. Let $\varphi_{\PP}:=\big(\tilde{\gamma},[\tilde{X}]^{\g},\tilde{d}\big)\colon U\rightarrow\PP(\FF)$ and $\varphi_{\Gamma}:=\big(\tilde{x},[\tilde{\sigma}]^{k}_{\tilde{x}}\big)\colon V\rightarrow\Gamma_{k}(\pi_{B})^{\FF}$ be plots. We~need to show that the map from $W:=U\times_{s\circ\varphi_{\PP},\pi_{B}^{k,\FF}\circ\varphi_{\Gamma}}V\times\RB_{\geq0}$ to $\Gamma_{k}(\pi_{B})^{\FF}$ defined by
\begin{gather*}
(u,v,t)\mapsto \big(\tilde{\gamma}(u)(t),\big[\Fl^{\ell(\tilde{X}(u))}_{t,0}\circ \tilde{\sigma}(v)\circ\Fl^{\tilde{X}(u)}_{0,t}\big]^{k}_{\tilde{\gamma}(u)(t)}\big)
\end{gather*}
is smooth. The~map $\tilde{\gamma}$ is already a plot of $\PP(M)$ by definition. Recalling that $(\Gamma_{\pi_{B}})_{\loc}$ denotes the space of all locally defined sections of $\pi_{B}$ equipped with the diffeology of Proposition~\ref{functprop}, it~suffices to show that the map
\begin{gather*}
(u,v,t)\mapsto\kappa(u,v,t):=\big(\Fl^{\ell(\tilde{X}(u))}_{t,0}\circ\tilde{\sigma}(v)\circ\Fl^{\tilde{X}(u)}_{0,t}\big)\in(\Gamma_{\pi_{B}})_{\loc}
\end{gather*}
is smooth. That is, fixing $(u_{0},v_{0},t_{0})\in W$ and $x_{0}\in\dom(\kappa(u_{0},v_{0},t_{0}))$, we must find an open neighbourhood $\WW$ of $(u_{0},v_{0},t_{0})$ in $W$ and an open neighbourhood $\OO$ of $x_{0}$ in $M$ such that $\OO\subset\dom(\kappa(u,v,t))$ for all $(u,v,t)\in\WW$ and for which the map
{\samepage\begin{gather*}
\WW\times\OO\ni(u,v,t,x)\mapsto \kappa(u,v,t)(x)\in B
\end{gather*}
is smooth in the usual sense. Setting $d_{0}:=d(u_{0})$, we have the following.

}

\begin{enumerate}\itemsep=0pt
\item Using Lemma~\ref{technicallemma} together with the smoothness of $\ell$ as a map $\Gamma_{\loc}(\FF)\rightarrow(\pi_{B})_{*}(\XF_{B})_{\loc}$, we can find open neighbourhoods $\OO^{B}\ni\tilde{\sigma}(v_{0})\big(\Fl^{\tilde{X}(u_{0})}_{0,t_{0}}(x_{0})\big)$ in $B$, $\UU_{1}\ni u_{0}$ in $U$ and $I_{1}\supset[0,t_{0}]$ in $\RB_{\geq0}$ for which $\OO^{B}\subset\dom\big(\Fl^{\ell(\tilde{X}(u))}_{t,0}\big)$ for all $(u,t)\in\UU_{1}\times I_{1}$, and such that $I_{1}\times\UU_{1}\times \OO^{B}\ni(t,u,b)\mapsto\Fl^{\ell(\tilde{X}(u))}_{t,0}(b)\in B$ is smooth.
\item By definition of the diffeology on $(\Gamma_{\pi_{B}})_{\loc}$, we can find open neighbourhoods $\OO^{M}\ni\Fl^{\tilde{X}(u_{0})}_{0,t_{0}}(x_{0})$ in $M$ and $\VV\ni v_{0}$ in $V$ such that $\OO^{M}\subset\dom(\tilde{\sigma}(v))$ and $\tilde{\sigma}(v)(\OO^{M})\subset\OO^{B}$ for all $v\in\VV$, and for which $\VV\times\OO^{M}\ni(v,x)\mapsto\tilde{\sigma}(v)(x)\in B$ is smooth.
\item Again by Lemma~\ref{technicallemma}, we can find open neighbourhoods $\UU_{2}\ni u_{0}$, $I_{2}\supset[0,t_{0}]$ in $\RB_{\geq0}$ and $\OO\ni x_{0}$ in $M$ such that $\OO\subset\dom\big(\Fl^{\tilde{X}(u)}_{0,t}\big)$ and $\Fl^{\tilde{X}(u)}_{0,t}(\OO)\subset\OO^{M}$ for all $u\in\UU_{2}$ and $t\in I_{2}$, and such that $I_{2}\times\UU_{2}\times \OO\ni(t,u,x)\mapsto\Fl^{\tilde{X}(u)}_{0,t}(x)\in M$ is smooth.
\end{enumerate}
Finally, therefore, setting $\UU:=\UU_{1}\cap\UU_{2}$, $I:=I_{1}\cap I_{2}$ and
\begin{gather*}
\WW:=\UU\times_{s\circ\varphi_{\PP},\pi_{B}^{k,\FF}\circ\varphi_{\Gamma}}\VV\times I\subset W,
\end{gather*}
we have $\OO\subset\dom(\kappa(u,v,t))$ for all $(u,v,t)\in\WW$ and $\WW\times\OO\ni(u,v,t,x)\mapsto\kappa(u,v,t)(x)\in B$ is smooth.
\end{proof}

\begin{Definition}\label{transportfunctor}
Let $\pi_{B}\colon B\rightarrow M$ be a singularly foliated bundle. Let $k$ denote any of the symbols $0,\dots,\infty,\g$, and suppose that $\pi_{B}$ admits enough conservation laws to order $k$. Then the map $T\big(\pi_{B}^{k,\FF}\big)\colon \PP(\FF)\rightarrow\Aut\big(\pi_{B}^{k,\FF}\big)$ defined by
\begin{gather*}
T\big(\pi_{B}^{k,\FF}\big)\big(\gamma,[X]^{\g},d\big)\big(x,[\sigma]^{k}_{x}\big):= L\big(\pi_{B}^{k,\FF}\big)\big(\gamma,[X],d;x,[\sigma]^{k}_{x}\big)(d)
\end{gather*}
is a transport functor called the \textit{$k$-transport functor} for $\pi_{B}$. The~associated holonomy groupoid (see Definition~\ref{holgpd}) is called the \textit{$k$-holonomy groupoid} of $\pi_{B}$ and denoted $\HH\big(\pi_{B}^{k,\FF}\big)$.
\end{Definition}

Note that since each holonomy groupoid of Definition~\ref{transportfunctor} arises as a quotient of the leafwise path category, \textit{every one of them} integrates the foliation of the base in the sense of Theorem~\ref{integrate}. Finally, we have the following analogue of~\cite[Theorem~5.15]{mac3} which relates all of the holonomy groupoids of a singularly foliated bundle.

In what follows we will assume the set $\{0,1,\dots,\infty,\g\}$ to be equipped with the total order which coincides with the usual one on $\NB$, and for which $k<\infty<\g$ for all $k\in\NB$.

\begin{Theorem}\label{hierarchy}
Let $\pi_{B}\colon B\rightarrow M$ be a singularly foliated bundle, and suppose that $\pi_{B}$ admits enough conservation laws to order $k$. Then for each $l\leq k$, there is a subductive groupoid morphism $\Pi_{B}^{l,k}\colon \HH\big(\pi_{B}^{k,\FF}\big)\rightarrow\HH(\pi_{B}^{l,\FF})$ such that $\Pi_{B}^{m,k} = \Pi_{B}^{m,l}\circ\Pi_{B}^{l,k}$ for all $m\leq l\leq k$. In~particular, if $\pi_{B}$ admits enough conservation laws to order $\g$, we have a commuting diagram
\begin{center}
\begin{tikzcd}[row sep = large]
& & & \HH\big(\pi_{B}^{\g,\FF}\big) \ar[d,"\Pi_{B}^{\infty,\g}"] & & &
\\
& & & \HH\big(\pi_{B}^{\infty,\FF}\big) \ar[dl,"\Pi_{B}^{k+1,\infty}"] \ar[dr,"\Pi_{B}^{k,\infty}"'] \ar[drrr,"\Pi_{B}^{0,\infty}"'] & & &
\\
& \cdots\ar[r] & \HH\big(\pi_{B}^{k+1,\FF}\big) \ar[rr,"\Pi_{B}^{k,k+1}"'] & & \HH\big(\pi_{B}^{k,\FF}\big) \ar[r] & \cdots \ar[r] & \HH\big(\pi_{B}^{0,\FF}\big)
\end{tikzcd}
\end{center}
of diffeological groupoids, which we refer to as the \textit{hierarchy of holonomy groupoids} for the singularly foliated bundle $\pi_{B}$.
\end{Theorem}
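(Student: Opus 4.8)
The plan is to realise each holonomy groupoid $\HH\big(\pi_{B}^{k,\FF}\big)$ as the quotient $\PP(\FF)/{\sim_{k}}$, where $\gamma_{1}\sim_{k}\gamma_{2}$ precisely when $T\big(\pi_{B}^{k,\FF}\big)(\gamma_{1}) = T\big(\pi_{B}^{k,\FF}\big)(\gamma_{2})$ (Definition~\ref{holgpd}), and then to show that for $l\leq k$ the relation $\sim_{k}$ refines $\sim_{l}$. Granting this, the identity functor on $\PP(\FF)$ descends to a functor $\Pi_{B}^{l,k}\colon \HH\big(\pi_{B}^{k,\FF}\big)\rightarrow\HH\big(\pi_{B}^{l,\FF}\big)$, which is a morphism of diffeological groupoids; it is smooth and subductive because the composite $\PP(\FF)\rightarrow\HH\big(\pi_{B}^{k,\FF}\big)\rightarrow\HH\big(\pi_{B}^{l,\FF}\big)$ equals the subduction $\PP(\FF)\rightarrow\HH\big(\pi_{B}^{l,\FF}\big)$ while $\PP(\FF)\rightarrow\HH\big(\pi_{B}^{k,\FF}\big)$ is itself a subduction. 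Since $\Pi_{B}^{l,k}$ is induced by $\id_{\PP(\FF)}$ and the jet projections satisfy $\pi_{B}^{m,k} = \pi_{B}^{m,l}\circ\pi_{B}^{l,k}$, the cocycle identity $\Pi_{B}^{m,k} = \Pi_{B}^{m,l}\circ\Pi_{B}^{l,k}$ is automatic; the displayed diagram is then the instance of this cocycle for the total order on $\{0,\dots,\infty,\g\}$, all of its groupoids existing because enough conservation laws to order $\g$ gives enough to every lower order by the remark after Definition~\ref{FFinv}.

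The comparison of the relations $\sim_{k}$ and $\sim_{l}$ rests on two compatibility facts. First, the jet projection restricts to a smooth morphism $\pi_{B}^{l,k}\colon \Gamma_{k}(\pi_{B})^{\FF}\rightarrow\Gamma_{l}(\pi_{B})^{\FF}$ of sub-pseudo-bundles; this is the analogue for the vertical prolongations $\vpf^{k}$ of Propositions~\ref{germprol} and~\ref{towerfield1}, namely ${\rm d}\pi_{B}^{l,k}\circ\vpf^{k}(\ell(X)) = \vpf^{l}(\ell(X))\circ\pi_{B}^{l,k}$, which one reads off the order-truncation-compatible coordinate formula for $\vpf^{k}$ and which shows that $\pi_{B}^{l,k}$ carries $\FF$-invariant $k$-jets or germs to $\FF$-invariant $l$-jets. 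Second, comparing the defining formula~\eqref{Leqn} of the lifting maps at levels $k$ and $l$ and invoking Proposition~\ref{germprol}, one finds that lifting commutes with projection:
\begin{gather*}
\pi_{B}^{l,k}\big(L\big(\pi_{B}^{k,\FF}\big)\big(\gamma,[X]^{\g},d;x,[\sigma]^{k}_{x}\big)(t)\big) = L\big(\pi_{B}^{l,\FF}\big)\big(\gamma,[X]^{\g},d;x,\pi_{B}^{l,k}\big([\sigma]^{k}_{x}\big)\big)(t),
\end{gather*}
and hence $\pi_{B}^{l,k}\circ T\big(\pi_{B}^{k,\FF}\big)(\gamma) = T\big(\pi_{B}^{l,\FF}\big)(\gamma)\circ\pi_{B}^{l,k}$ holds on the fibre $\Gamma_{k}(\pi_{B})^{\FF}_{s(\gamma)}$.

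The crux — and the step I expect to be the main obstacle — is to deduce from this intertwining that $\gamma_{1}\sim_{k}\gamma_{2}$ implies $\gamma_{1}\sim_{l}\gamma_{2}$. If $\pi_{B}^{l,k}$ were fibrewise surjective onto $\Gamma_{l}(\pi_{B})^{\FF}$ the implication would be immediate, and this is exactly the situation in the regular case of~\cite[Theorem~5.15]{mac3}, where $\Gamma_{\bullet}(\pi_{B})^{\FF}$ is the bundle of jets of distinguished sections and any distinguished $l$-jet extends, in a foliated chart, to a distinguished $k$-jet. In general an invariant $l$-jet need not lift to an invariant $k$-jet, so one must instead argue that the level-$k$ transport already determines the level-$l$ transport: by~\eqref{Leqn}, $T\big(\pi_{B}^{l,\FF}\big)(\gamma)$ evaluated on any invariant $l$-jet is a universal polynomial expression in the $l$-jets of the germs of the flows $\Fl^{\ell(X)}_{d,0}$ and $\Fl^{X}_{0,d}$ (the former being legitimate by completeness of $\ell$), and one shows, following the bookkeeping of~\cite[Theorem~5.15]{mac3}, that equality of $T\big(\pi_{B}^{k,\FF}\big)(\gamma_{1})$ and $T\big(\pi_{B}^{k,\FF}\big)(\gamma_{2})$ on invariant $k$-jets pins these flow data down tightly enough to force equality of $T\big(\pi_{B}^{l,\FF}\big)(\gamma_{1})$ and $T\big(\pi_{B}^{l,\FF}\big)(\gamma_{2})$. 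Once $\sim_{k}$ is known to refine $\sim_{l}$, the remaining assertions — that $\Pi_{B}^{l,k}$ is a subductive groupoid morphism, the cocycle identity, and the assembly of the hierarchy diagram — follow by the routine verifications indicated above.
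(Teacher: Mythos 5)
Your overall architecture is the right one, and it is essentially all the detail the paper itself supplies: the published proof of this theorem is the single sentence that it is ``similar to that of \cite[Theorem~5.15]{mac3}'', and the argument behind that citation is exactly what you reconstruct --- the intertwining $\pi_{B}^{l,k}\circ T\big(\pi_{B}^{k,\FF}\big)(\gamma) = T\big(\pi_{B}^{l,\FF}\big)(\gamma)\circ\pi_{B}^{l,k}$ on invariant jets, descent of $\id_{\PP(\FF)}$ through the two quotient subductions, and the automatic cocycle identity. You have also correctly isolated the crux: to pass from the intertwining to ``$\sim_{k}$ refines $\sim_{l}$'' one needs the restricted projection $\pi_{B}^{l,k}\colon\Gamma_{k}(\pi_{B})^{\FF}_{x}\rightarrow\Gamma_{l}(\pi_{B})^{\FF}_{x}$ to be surjective, which holds for distinguished sections of regularly foliated bundles but is not guaranteed by ``enough conservation laws to order $k$'', which only demands one invariant $k$-jet per base point.

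The gap is in your proposed patch for that step. The claim that equality of $T\big(\pi_{B}^{k,\FF}\big)(\gamma_{1})$ and $T\big(\pi_{B}^{k,\FF}\big)(\gamma_{2})$ on invariant $k$-jets ``pins the flow data down tightly enough'' cannot be made to work, because the fibre of invariant $k$-jets can be far too small to see the flows at all. Concretely, take $M=\RB$, $\FF$ the $C^{\infty}$-span of $x\partial_{x}$, $B=\RB\times\RB$ and $\ell(g\,x\partial_{x}):=g\cdot(x\partial_{x}+y\partial_{y})$ (one checks this is a complete, smooth, bracket-preserving partial connection). The germs at $0$ invariant to order $\g$ are exactly the sections $p\mapsto(p,cp)$, $c\in\RB$, all of which pass through $(0,0)\in B_{0}$, so $\Gamma_{\g}(\pi_{B})^{\FF}_{0}\rightarrow B_{0}=\Gamma_{0}(\pi_{B})^{\FF}_{0}$ is not surjective. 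The constant loop at $0$ and the loop $\big(0,[g(t)\,x\partial_{x}]^{\g},1\big)$ with $g$ a bump function of integral $1$ both act as the identity on every invariant germ (the dilation by $e$ in the base is exactly cancelled by the dilation by $e$ in the fibre), yet they act on $B_{0}$ by $y\mapsto y$ and $y\mapsto ey$ respectively. Hence $\sim_{\g}$ does not refine $\sim_{0}$ here, and no bookkeeping of flow jets can establish the implication you need: the descent genuinely requires the fibrewise surjectivity of the invariant-jet projections (or an equivalent additional hypothesis), which is the property the cited regular-case proof silently uses and which fails in the singular setting. Your write-up deserves credit for locating exactly this difficulty, but the step you flag as the ``main obstacle'' is not resolved by your argument, and as the example shows it cannot be resolved without strengthening the hypotheses.
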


\begin{proof}
The proof is similar to that of~\cite[Theorem~5.15]{mac3}.
\end{proof}

\subsection{Agreement with the Garmendia--Villatoro construction}

We show in this subsection that for trivial singularly foliated bundles, the germinal holonomy groupoid of Definition~\ref{transportfunctor} (all trivial singularly foliated bundles admit enough conservation laws to order $\g$~-- one need only take the constant functions) is a quotient of the holonomy groupoid constructed by Garmendia--Villatoro in~\cite{VilGar1}, hence~\cite[Theorem~5.5]{VilGar1} is also a quotient of the Androulidakis--Skandalis holonomy groupoid~\cite{iakovos2}. A~key feature of the Garmendia--Villatoro con\-st\-ruction is the use of \textit{slices}.

\begin{Definition}
Let $(M,\FF)$ be a singularly foliated manifold. A~\textit{slice} through a point $x\in M$ is an embedded submanifold $S_{x}\hookrightarrow M$ such that $T_{x}S\cap T_{x}L_{x} = 0$, and such that $T_{y}M = T_{y}S+T_{y}L_{y}$ for all $y\in S$.
\end{Definition}

Given a singularly foliated manifold $(M,\FF)$, Garmendia and Villatoro attach to each point $x\in M$ a slice $S_{x}$, and denote by $\g\Diff_{\FF}(S_{x},S_{y})$ the set of germs of foliation-preserving diffeomorphisms from $S_{x}$ to $S_{y}$. Letting $I_{x}$ denote the ideal of smooth functions vanishing at $x$, the group $\g\Diff_{\FF}(S_{x},S_{x})$ admits a subgroup $\exp(I_{x}\FF|_{S_{x}})$ consisting of flows of (possibly time-dependent) elements of $I_{x}\FF|_{S_{x}}$. Garmendia and Villatoro then define the groupoid
\begin{gather*}
\text{HT}:=\bigsqcup_{x,y\in M}\g\Diff_{\FF}(S_{x},S_{y})/\exp\big(I_{y}\FF|_{S_{y}}\big)
\end{gather*}
of holonomy transformations. Any element $(\gamma,[X]^{\g},d)\in\PP(\FF)$ defines an element $\text{Hol}(\gamma,[X]^{\g},d)$ of $\text{HT}$ by choosing~\cite[Lemma~A.8]{iakovos4} $Z\in I_{\gamma(d)}\FF$ such that $\Fl^{Z}_{1}\circ\Fl^{X}_{d}$ maps a neighbourhood of~$\gamma(0)$ in $S_{\gamma(0)}$ onto a neighbourhood of $\gamma(d)$ in $S_{\gamma(d)}$. Then the class of
\begin{gather*}
\text{Hol}(\gamma,[X]^{\g},d):=\big[\Fl^{Z}_{1,0}\circ\Fl^{X}_{d,0}\big]^{\g}_{\gamma(0)}
\end{gather*}
in HT is independent of the choice of $Z$, and one thereby obtains a map $\text{Hol}\colon \PP(\FF)\rightarrow {\rm HT}$. The~Garmendia--Villatoro holonomy groupoid is now the diffeological quotient of $\PP(\FF)$ by the fibres of $\text{Hol}$.

\begin{Theorem}\label{agree}
Let $(M,\FF)$ be a singularly foliated manifold of dimension $n$. Then the holonomy groupoid $\HH\big(\pi_{M\times\RB^{n}}^{\g,\FF}\big)$ associated to the trivial singularly foliated bundle \mbox{$\pi_{M\times\RB^{n}}\colon M\times\RB^{n}\rightarrow M$} $($see Example~$\ref{triv})$ is the quotient of the Garmendia--Villatoro holonomy groupoid by the equivalence relation which identifies groupoid elements if and only if they induce the same parallel transport map on $n$-tuples of first integrals. In~particular, if $\FF$ is regular of codimension $q\leq n$, then $\HH\big(\pi_{M\times\RB^{n}}^{\g,\FF}\big)$ coincides with the Garmendia--Villatoro holonomy groupoid.
\end{Theorem}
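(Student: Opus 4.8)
The plan is to pin down the objects entering $\HH\big(\pi_{M\times\RB^{n}}^{\g,\FF}\big)$ explicitly and then to compare the partition of $\PP(\FF)$ it induces with the one coming from the Garmendia--Villatoro holonomy map. First I would compute the invariant pseudo-bundle and the transport functor for the trivial lift $\ell(X) = (\pi^{*}X,0)$ of Example~\ref{triv}. Since $\Fl^{\ell(X)}_{t}$ moves only the base point and fixes the $\RB^{n}$-coordinate, for a section $z\mapsto(z,\bar\sigma(z))$ the curve whose derivative defines $\vpf^{\g}(\ell(X))$ at $\big(x,[\sigma]^{\g}_{x}\big)$ is $t\mapsto\big(x,[z\mapsto(z,\bar\sigma(\Fl^{X}_{-t}(z)))]^{\g}_{x}\big)$; by Proposition~\ref{tangentgerm} this has vanishing derivative at $0$ exactly when $X\bar\sigma = 0$ near $x$. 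Hence the fibre $\Gamma_{\g}(\pi_{M\times\RB^{n}})^{\FF}_{x}$ is the space of germs at $x$ of $n$-tuples of first integrals of $\FF$, and the same computation with $\Fl^{\ell(X)}_{d,0}$ shows that the transport functor of Definition~\ref{transportfunctor} acts by $T\big(\pi_{M\times\RB^{n}}^{\g,\FF}\big)\big(\gamma,[X]^{\g},d\big)\colon[\bar\sigma]^{\g}_{\gamma(0)}\mapsto[\bar\sigma\circ\Fl^{X}_{0,d}]^{\g}_{\gamma(d)}$. Thus $\HH\big(\pi_{M\times\RB^{n}}^{\g,\FF}\big)$ is the quotient of $\PP(\FF)$ by the relation that two leafwise paths are equivalent iff they share endpoints and induce the same precomposition map on germs of $n$-tuples of first integrals.

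I would then show that the fibres of the Garmendia--Villatoro map $\mathrm{Hol}\colon\PP(\FF)\to\mathrm{HT}$ refine those of $T\big(\pi_{M\times\RB^{n}}^{\g,\FF}\big)$; equivalently, the parallel transport on first integrals factors through $\mathrm{Hol}$. The quotient-of-a-quotient property of subductions (and the groupoid quotients of \cite[Proposition~3.27]{mac3}) will then identify $\HH\big(\pi_{M\times\RB^{n}}^{\g,\FF}\big)$, as a diffeological groupoid, with the quotient of the Garmendia--Villatoro groupoid by the relation identifying classes iff they act identically on germs of $n$-tuples of first integrals -- which is exactly the claim. The factorization rests on two elementary facts: (a) the germ of a first integral is unchanged under pre- or post-composition with the flow of any element of $\FF$, since such flows preserve the leaves on which first integrals are constant; and (b) the germ at $y$ of a first integral is determined by its restriction to any slice $S_{y}$, because the slice conditions $T_{z}M = T_{z}S_{y}+T_{z}L_{z}$ realize a neighbourhood of $y$ as a union of leaf-plaques, each meeting $S_{y}$. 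Concretely, for a representative $\phi = \Fl^{Z}_{1,0}\circ\Fl^{X}_{d,0}$ of $\mathrm{Hol}\big(\gamma,[X]^{\g},d\big)$ with $Z\in I_{\gamma(d)}\FF$, fact (a) gives $\bar\sigma\circ\Fl^{X}_{0,d} = \bar\sigma\circ\phi^{-1}$ as germs at $\gamma(d)$; fact (a) applied to the ambiguity of $\mathrm{Hol}$ by $\exp\big(I_{\gamma(d)}\FF|_{S_{\gamma(d)}}\big)$, combined with fact (b) (noting that $(\bar\sigma\circ\phi^{-1})|_{S_{\gamma(d)}}$ is a first integral of the restricted foliation $\FF|_{S_{\gamma(d)}}$, hence invariant under flows of $I_{\gamma(d)}\FF|_{S_{\gamma(d)}}$), shows that $[\bar\sigma\circ\phi^{-1}]^{\g}_{\gamma(d)}$ depends only on the class of $\mathrm{Hol}\big(\gamma,[X]^{\g},d\big)$ in $\mathrm{HT}$.

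Finally, for the regular case I would specialise: when $\FF$ has codimension $q\le n$ the slices are $q$-dimensional transversals, $\FF|_{S_{y}} = 0$, so $\exp\big(I_{y}\FF|_{S_{y}}\big)$ is trivial, $\mathrm{Hol}\big(\gamma,[X]^{\g},d\big)$ is literally a germ of diffeomorphism $\big(S_{\gamma(0)},\gamma(0)\big)\to\big(S_{\gamma(d)},\gamma(d)\big)$, and by fact (b) germs of first integrals at $y$ correspond to germs of arbitrary functions on $S_{y}$ at $y$. Restricting the transport map to slices turns it into precomposition by $\phi^{-1}|_{S_{\gamma(d)}}$; choosing (possible since $n\ge q$) an $n$-tuple $\bar\sigma$ whose restriction to $S_{\gamma(0)}$ is a coordinate embedding $S_{\gamma(0)}\hookrightarrow\RB^{q}\subseteq\RB^{n}$ forces two paths with equal transport to have the same holonomy germ, so the fibres of $T$ and of $\mathrm{Hol}$ coincide and $\HH\big(\pi_{M\times\RB^{n}}^{\g,\FF}\big)$ agrees with the Garmendia--Villatoro groupoid, which by \cite[Theorem~5.5]{VilGar1} is the Androulidakis--Skandalis, hence Winkelnkemper--Phillips, holonomy groupoid. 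The main obstacle will be making fact (b), together with the bookkeeping of domains of definition and of the $I_{\gamma(d)}\FF$-corrections in the Garmendia--Villatoro construction, precise enough to see that the transport functor genuinely descends along $\mathrm{Hol}$ to $\mathrm{HT}$ rather than merely along the passage to germs of $\FF$-preserving diffeomorphisms of $M$.
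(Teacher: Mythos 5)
Your proposal is correct and follows essentially the same route as the paper: identify the $\FF$-invariant germs of sections of the trivial bundle with germs of $n$-tuples of first integrals via a slice/foliated chart, observe that the transport functor acts by precomposition with the flow, use invariance of first integrals under flows of $I_{y}\FF|_{S_{y}}$ to see that transport factors through $\mathrm{Hol}$, and in the regular case recover the holonomy germ by feeding in the defining submersion (your ``coordinate embedding of the slice''). The only differences are presentational -- the paper writes the invariant sections explicitly as $\sigma(a,b)=(a,b,f(b))$ in the product chart of \cite[Proposition~1.3]{iakovos4} rather than isolating your fact (b) as a separate lemma, and leaves the quotient-of-a-quotient subduction step implicit.
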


In order to prove Theorem~\ref{agree}, we need to show that if two elements of the leafwise path space $\PP(\FF)$ of such a foliation are mapped to the same germ under $\text{Hol}$ then they are mapped to the same diffeomorphism in $\Aut\big(\pi_{M\times\RB^{n}}^{\g,\FF}\big)$ under the transport functor $T$ of Definition~\ref{transportfunctor}, and conversely for $\FF$ regular. To~show that this is true let us discuss the relationship between slices in $(M,\FF)$ and $\FF$-invariant local sections of $M\times\RB^{n}$.

Slices are always found inside certain foliated charts. We~recall~\cite[Proposition~1.3]{iakovos4} that if the dimension of the leaf $L_{x}$ through $x$ is $p$, and if $S_{x}$ is a slice through $x$, then there exists an open neighbourhood $\OO$ of $x$ in $M$ and a diffeomorphism of foliated manifolds
\begin{gather}\label{chart}
\big(\OO,\FF|_{\OO}\big)\cong\big(\RB^{p},\XF_{\RB^{p}}\big)\times(S_{x},\FF|_{S_{x}}).
\end{gather}
In these coordinates, every $\FF$-invariant section of $M\times\RB^{n}\rightarrow M$ takes the form
\begin{gather}\label{sigmaf}
\sigma(a,b) = (a,b,f(b)),\qquad (a,b)\in\RB^{p}\times S_{x},
\end{gather}
where $f\colon S_{x}\rightarrow\RB^{n}$ is an $\FF$-invariant function.

\begin{proof}[Proof of Theorem~\ref{agree}]
Denote the transport functor $T\big(\pi_{M\times\RB^{n}}^{\g,\FF}\big)$ by simply $T$. Given ele\-ments $\big(\gamma_{i},[X_{i}]^{\g},d_{i}\big)\in\PP(\FF)$, $i = 1,2$, for item 1 we must show that $\text{Hol}\big(\gamma_{1},[X_{1}]^{\g},d_{1}\big) = \text{Hol}\big(\gamma_{2},[X_{2}]^{\g},d_{2}\big)$ implies $T\big(\gamma_{1},[X_{1}]^{\g},d_{1}\big) = T\big(\gamma_{2},[X_{2}]^{\g},d_{2}\big)$, and conversely for item 2 subject to the additional hypothesis. Clearly in either case $\gamma_{1}$ and $\gamma_{2}$ must have the same source and range, which we denote by $x$ and $y$ respectively, and each $X_{i}$ defines a diffeomorphism $\varphi_{i}:=\Fl^{X_{i}}_{d_{i},0}$ of~some open neighbourhood $\OO_{x}$ of $x$ onto an open neighbourhood $\OO_{y}$ of $y$. We~may assume that $\OO_{x}$ and $\OO_{y}$ are of the form given in equation~\eqref{chart} for the slices $S_{x}$ and $S_{y}$ about~$x$ and~$y$ respectively that are used to define HT. Let us fix $Z_{i}$, $i=1,2$, such that $\Fl^{Z_{i}}_{1,0}\circ\Fl^{X_{i}}_{d_{i},0}$ maps a~neighbourhood of $x$ in $S_{x}$ onto a neighbourhood of~$y$ in $S_{y}$.

First suppose that $\text{Hol}\big(\gamma_{1},[X_{1}]^{\g},d_{1}\big) = \text{Hol}\big(\gamma_{2},[X_{2}]^{\g},d_{2}\big)$. Then there exists a possibly time-dependent element $W$ of $I_{x}\FF|_{S_{x}}$ such that
\begin{gather*}
\big[\Fl^{Z_{1}}_{1,0}\circ\Fl^{X_{1}}_{d_{1},0}\big|_{S_{x}}\big]^{\g}_{x} = \big[\Fl^{W}_{1,0}\circ\Fl^{Z_{2}}_{1,0}\circ\Fl^{X_{2}}_{d_{2},0}\big|_{S_{x}}\big]^{\g}_{x}
\end{gather*}
as germs of maps $S_{x}\rightarrow S_{y}$ at $x$. Now for any $\FF$-invariant section $\sigma$ defined in an open neighbourhood $y$ of $M$, we have
\begin{gather*}
\Fl^{\ell(X_{1})}_{0,d_{1}}\circ\sigma\circ\Fl^{X_{1}}_{d_{1},0} = \Fl^{\ell(X_{1})}_{0,d_{1}}\circ\Fl^{\ell(Z_{1})}_{0,1}\circ\sigma\circ\Fl^{Z_{1}}_{1,0}\circ\Fl^{X_{1}}_{d_{1},0}
\end{gather*}
on an open neighbourhood of $x$. Choosing coordinates about $x$ and $y$ defined respectively by~$S_{x}$ and $S_{y}$ as in equation~\eqref{chart}, with $\sigma$ given by an $\FF$-invariant function $f\colon S_{y}\rightarrow\RB^{n}$ as in equation~\eqref{sigmaf}, we then have
\begin{align*}
\Fl^{\ell(X_{1})}_{0,d_{1}}\circ\Fl^{\ell(Z_{1})}_{0,1}\circ\sigma\circ\Fl^{Z_{1}}_{1,0} \circ\Fl^{X_{1}}_{d_{1},0}(a,b) &= \big(a,b,\big(f\circ\Fl^{Z_{1}}_{1,0}\circ\Fl^{X_{1}}_{d_{1},0}\big)(b)\big)
\\
&= \big(a,b,\big(f\circ\Fl^{W}_{1,0}\circ\Fl^{Z_{2}}_{1,0}\circ\Fl^{X_{2}}_{d_{2},0}\big)(b)\big)
\\
&= \big(a,b,\big(f\circ\Fl^{Z_{2}}_{1,0}\circ\Fl^{X_{2}}_{d_{2},0}\big)(b)\big)
\\
&=\Fl^{\ell(X_{2})}_{0,d_{2}}\circ\Fl^{\ell(Z_{2})}_{0,1}\circ\sigma\circ\Fl^{Z_{2}}_{1,0} \circ\Fl^{X_{2}}_{d_{2},0}(a,b)
\\
&= \Fl^{\ell(X_{2})}_{0,d_{2}}\circ\sigma\circ\Fl^{X_{2}}_{d_{2},0}(a,b)
\end{align*}
for $(a,b)$ close to $(0,x)$ in $\RB^{k}\times S_{x}$, giving $T\big((\gamma_{1},[X_{1}]^{\g},d_{1})^{-1}\big) = T\big((\gamma_{2},[X_{2}]^{\g},d_{2})^{-1}\big)$ and hence $T\big(\gamma_{1},[X_{1}]^{\g},d_{1}\big) = T\big(\gamma_{2},[X_{2}]^{\g},d_{2}\big)$.

Now suppose that $\FF$ is regular of codimension $q\leq n$ and that $\big(\gamma_{i},[X_{i}]^{\g},d_{i}\big)$ define the same element in $\HH\big(\pi_{M\times\RB^{n}}^{\g,\FF}\big)$. Then for all $\FF$-invariant sections $\sigma$ defined in a neighbourhood of $y$, we~have
\begin{align*}
\Fl^{\ell(X_{1})}_{0,d_{1}}\circ\Fl^{\ell(Z_{1})}_{0,1}\circ\sigma\circ\Fl^{Z_{1}}_{1,0} \circ\Fl^{X_{1}}_{d_{1},0} &= \Fl^{\ell(X_{1})}_{0,d_{1}}\circ\sigma\circ\Fl^{X_{1}}_{d_{1},0} = \Fl^{\ell(X_{2})}_{0,d_{2}}\circ\sigma\circ\Fl^{X_{2}}_{d_{2},0}
\\
&= \Fl^{\ell(X_{2})}_{0,d_{2}}\circ\Fl^{\ell(Z_{2})}_{0,1}\circ\sigma\circ \Fl^{Z_{2}}_{1,0}\circ\Fl^{X_{2}}_{d_{2},0}
\end{align*}
on some open neighbourhood of $x$ in $M$. Writing $\sigma$ in terms of an $\FF$-invariant function $f$ on $S_{y}$ as in equation~\eqref{sigmaf}, we then have
\begin{gather*}
f\circ\Fl^{Z_{1}}_{1,0}\circ\Fl^{X_{1}}_{d_{1},0}\big|_{S_{x}} = f\circ \Fl^{Z_{2}}_{1,0}\circ\Fl^{X_{2}}_{d_{2},0}\big|_{S_{x}}
\end{gather*}
on some open neighbourhood of $x$ in $S_{x}$. Since slices in a regular foliation carry trivial foliations by points, taking $f$ to be the restriction to $S_{y}$ of any function $U\rightarrow\RB^{q}\hookrightarrow\RB^{n}$ defining $\FF$ on an~open set $U$ containing $y$ we then have
\begin{gather*}
\Fl^{Z_{1}}_{1,0}\circ\Fl^{X_{1}}_{d_{1},0}\big|_{S_{x}} = \Fl^{Z_{2}}_{1,0}\circ\Fl^{X_{2}}_{d_{2},0}\big|_{S_{x}}
\end{gather*}
on some neighbourhood of $x$ in $S_{x}$, hence $\text{Hol}\big(\gamma_{1},[X_{1}]^{\g},d_{1}\big) = \text{Hol}\big(\gamma_{2},[X_{2}]^{\g},d_{2}\big)$.
\end{proof}

The proof of the second part of Theorem~\ref{agree} might seem to suggest that if $\FF$ is given by a~smooth, possibly singular, codimension $q\leq n$ Haefliger structure~\cite{ha3} (i.e., the leaves are locally the level sets of families of first integrals), then the holonomy groupoid $\HH\big(\pi_{M\times\RB^{q}}^{\g,\FF}\big)$ will coincide with the Garmendia--Villatoro groupoid. Somewhat surprisingly this is not true, as can be seen in even the simplest of examples.

\begin{Example}\label{counter1}
The Androulidakis--Skandalis holonomy groupoid (and therefore the Garmen\-dia--Villatoro groupoid) of the foliation $\FF$ of $\RB^{2}$ by concentric circles is the action groupoid $S^{1}\ltimes\RB^{2}$~\cite[Example 2, p.~496]{debord}. On the other hand, since all $\FF$-invariant functions in a~neigh\-bour\-hood of the origin are (germinally) fixed by flows of $\FF$-fields, the holonomy groupoid of the trivial singularly foliated bundle $\RB\times\RB^{2}\rightarrow\RB^{2}$ is, as a set, equal to $S^{1}\times\big(\RB^{2}\setminus\{0_{\RB^{2}}\}\big)\cup\{0_{\RB^{2}}\}$.
\end{Example}

Example~\ref{counter1} motivates the following conjecture, a proof of which would definitively say that conservation laws are \textit{never} sufficient to capture all the holonomy of genuinely singular foliations.

\begin{Conjecture}\label{Conjecture}
Suppose that a foliation $\FF$ of a manifold $M$ is defined by a codimension~$q$ Haefliger structure $H$. Then the holonomy groupoid $\HH\big(\pi_{M\times\RB^{q}}^{\g,\FF}\big)$ is equal to the Garmendia--Villatoro groupoid if and only if $H$ is a regular Haefliger structure.
\end{Conjecture}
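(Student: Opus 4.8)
The plan is to treat the two implications separately. One direction — that regularity of $H$ forces the two groupoids to agree — is already contained in Theorem~\ref{agree}: a regular codimension-$q$ Haefliger structure is exactly a regular foliation of codimension $q$, and the argument in the second half of the proof of Theorem~\ref{agree} applies verbatim with the bundle rank $n$ replaced by $q$, using first integrals valued directly in $\RB^{q}$. So the substance of the conjecture is the reverse implication, which I would establish contrapositively: assuming $H$ is \emph{not} regular, I would exhibit a pair of elements of $\PP(\FF)$ that the transport functor $T:=T(\pi_{M\times\RB^{q}}^{\g,\FF})$ identifies but $\text{Hol}$ separates.

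The first step is to pin down the sense in which $T$ factors through $\text{Hol}$. A codimension-$q$ Haefliger structure provides, near any point $y$, a first integral $h_{y}\colon U\to\RB^{q}$ whose level sets are the leaves, and I would record as a lemma that, in a slice chart as in~\eqref{chart}, every $\FF$-invariant germ at $y$ has the form $[g\circ h_{y}]^{\g}_{y}$ for some $g\in C^{\infty}_{\RB^{q},h_{y}(y)}$; this is the statement that the leaves of $\FF|_{S_{y}}$ are the level sets of $h_{y}|_{S_{y}}$, together with a Glaeser-type fact that a smooth function constant on these level sets is a smooth function of $h_{y}|_{S_{y}}$. Granting this, the slice computation already carried out in the proof of Theorem~\ref{agree} shows that for a leafwise path from $x$ to $y$ with slice-holonomy germ $\psi\colon(S_{x},x)\to(S_{y},y)$, the transport $T$ of the path depends only on the germ of $h_{y}\circ\psi$, whereas $\text{Hol}$ of the path records the class of $\psi$ modulo $\exp(I_{y}\FF|_{S_{y}})$. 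In other words $T=\Theta\circ\text{Hol}$ for a map $\Theta\colon \text{HT}\to\Aut(\pi_{M\times\RB^{q}}^{\g,\FF})$ which depends on $\psi$ only through the germ of $h_{y}\circ\psi$; this is well defined because $h_{y}$ is fixed germinally by $\exp(I_{y}\FF|_{S_{y}})$. Hence the two holonomy groupoids coincide precisely when $\Theta$ is injective on $\im(\text{Hol})$ — and when $H$ is regular, $h_{y}|_{S_{y}}$ is a local diffeomorphism, so the germ of $h_{y}\circ\psi$ determines $\psi$, $\Theta$ is injective, and we recover the easy implication.

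It then remains to show that when $H$ is singular, $\Theta$ fails to be injective on $\im(\text{Hol})$. At a singular point $x_{0}$ of $H$ one has $\rank(dh_{x_{0}})<q$, so $h_{x_{0}}|_{S_{x_{0}}}$ is not an immersion at $x_{0}$, and $\FF$ cannot vanish near $x_{0}$, for otherwise $h$ would be locally constant, contradicting that its level sets are the leaves. The plan is to choose an $\FF$-field $V$ defined near $x_{0}$ with $V(x_{0})=0$ but $[V]^{\g}_{x_{0}}\neq 0$ and a leafwise loop $\rho$ at $x_{0}$ realising the time-one flow of $V$; its slice-holonomy is then an $\FF|_{S_{x_{0}}}$-preserving germ of diffeomorphism of $(S_{x_{0}},x_{0})$ that fixes $h_{x_{0}}$ germinally (because $h$ is $\FF$-invariant), so $\Theta(\text{Hol}(\rho))$ is the identity and $T(\rho)$ equals $T$ of the constant path at $x_{0}$. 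One then wants $\text{Hol}(\rho)\neq\text{Hol}(\text{const})$ in $\text{HT}$, i.e.\ that this germinal holonomy does not lie in $\exp(I_{x_{0}}\FF|_{S_{x_{0}}})$; this is exactly what happens in Example~\ref{counter1}, where the nonzero rotations realise the $S^{1}$ isotropy at the origin that is collapsed to a point in the germinal holonomy groupoid of the trivial bundle over $\RB^{2}$.

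The hard part is this last step in full generality. It is conceivable that at some singular point the germinal holonomy of \emph{every} $\FF$-field already lies in $\exp(I_{x_{0}}\FF|_{S_{x_{0}}})$, in which case one would have to work with holonomies of leafwise paths between distinct points of a singular leaf, or with iterated holonomies, to produce a nontrivial element of $\ker\Theta\cap\im(\text{Hol})$; and even when a suitable symmetry among $\FF|_{S_{x_{0}}}$-preserving, $h_{x_{0}}$-fixing germs of diffeomorphisms exists abstractly, it need not be realised by any element of $\PP(\FF)$. Settling the conjecture therefore seems to demand a structural description of the isotropy of the Androulidakis--Skandalis groupoid at singular points of $H$ — its linear holonomy and isotropy Lie group — together with an understanding of exactly how much of that isotropy is detected by the level sets of $h$; in the absence of such a description we can only conjecture the statement.
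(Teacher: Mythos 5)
The statement you are addressing is stated in the paper as an open \emph{conjecture}: the paper offers no proof, and explicitly remarks that settling it ``will require a study of the relationship between slices and the critical points of first integrals.'' Your proposal does not close that gap, and you say so yourself, so it cannot be judged as a proof; but as an analysis of the problem it is sound and consistent with what the paper does establish. The ``if'' direction is indeed already contained in the second half of the proof of Theorem~\ref{agree} (take $n=q$ there), and your reduction of the ``only if'' direction to the failure of injectivity of a factor map $\Theta$ with $T=\Theta\circ\mathrm{Hol}$ is a reasonable sharpening of the paper's Example~\ref{counter1}, where exactly this phenomenon (the $S^{1}$ isotropy at the origin being collapsed because every invariant germ is fixed) produces the discrepancy.

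Two concrete gaps in your outline deserve flagging. First, the ``Glaeser-type'' lemma you record — that every $\FF$-invariant germ on a slice is of the form $[g\circ h_{y}]^{\g}_{y}$ for a \emph{smooth} $g$ — is genuinely nontrivial for a singular first integral $h_{y}$ and is not available in the paper; without it you do not know that $T$ depends on the slice holonomy $\psi$ only through $h_{y}\circ\psi$, and the factorisation $T=\Theta\circ\mathrm{Hol}$ in the form you use it is unproven. (The factorisation itself, in the weaker sense that $\mathrm{Hol}(\gamma_{1})=\mathrm{Hol}(\gamma_{2})$ implies $T(\gamma_{1})=T(\gamma_{2})$, is what the first half of Theorem~\ref{agree} gives.) Second, your justification that $\FF$ cannot vanish near a singular point of $H$ is off: if $\FF$ vanished near $x_{0}$ the leaves would be points and $h$ would be locally \emph{injective}, not locally constant; the correct obstruction is invariance of domain when $q<\dim M$, and the statement needs care when $q=\dim M$. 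Beyond these, the step you correctly identify as the crux — producing, at a singular point, an element of $\mathrm{im}(\mathrm{Hol})$ whose slice holonomy fixes $h$ germinally yet does not lie in $\exp\big(I_{x_{0}}\FF|_{S_{x_{0}}}\big)$ — is precisely the open problem; your honest admission that it may require understanding the isotropy of the Androulidakis--Skandalis groupoid at critical points of $h$ is in line with the paper's own assessment, so your proposal should be read as a correct framing of the conjecture rather than a proof of it.
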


It is expected that Conjecture~\ref{Conjecture} will require a study of the relationship between slices and the critical points of first integrals. We~comment on the potential for generalisation of the ideas in this paper to recover the Androulidakis--Skandalis groupoid in full generality in the outlook section at the end of the paper.

\subsection{Functoriality}

In this final section, we show that all of our constructions are functorial for a class of morphisms of singularly foliated bundles over the same base foliation. It is conceivable that these results can be extended to morphisms between singularly foliated bundles over \textit{different} foliations (cf.~\cite[Theorem~6.21]{VilGar1}), however this would likely involve an analogue of the homotopy groupoid of~\cite{VilGar1}, which is beyond the scope of this paper. We~leave this question open to future research.

\begin{Definition}\label{singmorph}
Let $(M,\FF)$ be a singular foliation, let $k$ denote any of the symbols $0,1,\dots$, $\infty,\g$, and let $\pi_{B_{1}}\colon B_{1}\rightarrow M$ and $\pi_{B_{2}}\colon B_{2}\rightarrow M$ be singularly foliated bundles that admit enough conservation laws to order $k$. A~\textit{morphism of order $k$} from $\pi_{B_{1}}$ to $\pi_{B_{2}}$ consists of a surjective bundle morphism $f\colon B_{1}\rightarrow B_{2}$ with the properties that:
\begin{enumerate}\itemsep=0pt
\item[$(1)$] the range of $\ell_{1}$ consists of vector fields which are projectable via $f$ and one has
\begin{gather*}
f_{*}\circ\ell_{1} = \ell_{2},
\end{gather*}
\item[$(2)$] $f$ admits enough conservation laws to order $k$. That is, for every $b\in B_{2}$ there is an open neighbourhood $\OO$ of $b$ in $B_{2}$ and a local section $\kappa\colon \OO\rightarrow B_{2}$ of $f$ for which
\begin{gather*}
\frac{\rm d}{{\rm d}t}\bigg|_{0}\big[\Fl^{\ell_{2}(X)}_{t}\circ\kappa\circ\Fl^{\ell_{1}(X)}_{-t}\big]^{k}_{b} = 0
\end{gather*}
for all $X\in\FF$ defined in a neighbourhood of $\pi_{B_{2}}(b)$.
\end{enumerate}
\end{Definition}

Observe, in the notation of Definition~\ref{singmorph}, that if $\sigma$ is any $\FF$-invariant section of $\pi_{B_{1}}$, then $f\circ\sigma$ is an $\FF$-invariant section of $\pi_{B_{2}}$: for $X\in\FF$ defined in a neighbourhood of $x\in\dom(\sigma)$, we~have
\begin{align*}
\frac{\rm d}{{\rm d}t}\bigg|_{0}\big[\Fl^{\ell_{2}(X)}_{t}\circ (f\circ\sigma)\circ\Fl^{X}_{-t}\big]^{k}_{x} &
= \frac{\rm d}{{\rm d}t}\bigg|_{0}\big[f\circ\Fl^{\ell_{1}(X)}_{t}\circ\sigma\circ\Fl^{X}_{-t}\big]^{k}_{x}
\\
&= {\rm d}[f]^{k}\frac{\rm d}{{\rm d}t}\bigg|_{0}\big[\Fl^{\ell_{1}(X)}_{t}\circ\sigma\circ\Fl^{X}_{-t}\big]^{k}_{x}=0,
\end{align*}
where $[f]^{k}\colon \Gamma_{k}(\pi_{B_{1}})\rightarrow\Gamma_{k}(\pi_{B_{2}})$ is the smooth map $\big(x,[\sigma]^{k}_{x}\big)\mapsto (x,[f\circ\sigma]^{k}_{x})$ induced by $f$ and ${\rm d}[f]^{k}$ its differential (Definition~\ref{differential}). Thus $f$ induces a morphism $f^{\FF}\colon \Gamma_{k}(\pi_{B_{1}})^{\FF}\rightarrow\Gamma_{k}(\pi_{B_{2}})^{\FF}$ of diffeological pseudo-bundles, which is surjective since $f$ admits enough conservation laws to order $k$.

\begin{Theorem}\label{functoriality}
Let $k$ denote any of the symbols $0,1,\dots,\infty,\g$, let $\pi_{B_{1}}\colon B_{1}\rightarrow M$ and $\pi_{B_{2}}\colon$ \mbox{$B_{2}\rightarrow M$} be singularly foliated bundles admitting enough conservation laws to order $k$, and let $f\colon \pi_{B_{1}}\rightarrow\pi_{B_{2}}$ be a morphism of order $k$ in the sense of Definition~$\ref{singmorph}$. Then the identity map $\id\colon \PP(\FF)\rightarrow\PP(\FF)$ descends, for each $l\leq k$, to a morphism $\phi^{l}\colon \HH\big(\pi_{B_{1}}^{l,\FF}\big)\rightarrow\HH\big(\pi_{B_{2}}^{l,\FF}\big)$ of diffeological groupoids for which the diagrams
\begin{equation}
\begin{tikzcd}
\HH\big(\pi_{B_{1}}^{l,\FF}\big) \ar[r,"\phi^{l}"] \ar[d,"\Pi_{B_{1}}^{m,l}"] & \HH\big(\pi_{B_{2}}^{l,\FF}\big) \ar[d,"\Pi_{B_{2}}^{m,l}"]
\\
\HH\big(\pi_{B_{1}}^{m,\FF}\big) \ar[r,"\phi^{m}"] & \HH\big(\pi_{B_{2}}^{m,\FF}\big)
\end{tikzcd}
\end{equation}
commute for all $m\leq l\leq k$. In~particular if $\pi_{B_{1}}$ and $\pi_{B_{2}}$ admit enough conservation laws to order $\g$ and $f$ is a morphism of order $\g$, the diagram
\begin{center}
\begin{tikzcd}[column sep = small]
& & & & \HH\big(\pi_{B_{1}}^{\g,\FF}\big) \ar[dl,"\phi^{\g}"'] \ar[dd] & & &
\\
& & & \HH\big(\pi_{B_{2}}^{\g,\FF}\big) \ar[dd] & & & &
\\
&&& & \HH\big(\pi_{B_{1}}^{\infty,\FF}\big) \ar[dl, "\phi^{\infty}"'] \ar[ddl] \ar[ddr] \ar[ddrrr] &&&
\\
& & & \HH\big(\pi_{B_{2}}^{\infty,\FF}\big) & & & &
\\
& & \cdots\ar[r] & \HH\big(\pi_{B_{1}}^{k+1,\FF}\big) \ar[rr] \ar[dl, "\phi^{k+1}"] & & \HH\big(\pi_{B_{1}}^{k,\FF}\big) \ar[r] \ar[dl,"\phi^{k}"] & \cdots \ar[r] & \HH\big(\pi_{B_{1}}^{0,\FF}\big) \ar[dl,"\phi^{0}"]
\\[2ex]
& \cdots\ar[r] & \HH\big(\pi_{B_{2}}^{k+1,\FF}\big) \ar[from = uur, crossing over] \ar[rr] & & \HH\big(\pi_{B_{2}}^{k,\FF}\big) \ar[from = uul, crossing over] \ar[r] & \cdots \ar[r] & \HH\big(\pi_{B_{2}}^{0,\FF}\big) \ar[from = uulll, crossing over] &
\end{tikzcd}
\end{center}
commutes. Here the unlabelled arrows are as in Theorem~$\ref{hierarchy}$. That is, the hierarchy of holo\-nomy groupoids is functorial.
\end{Theorem}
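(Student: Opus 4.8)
The plan is to exploit that, for each $l$, the groupoid $\HH\big(\pi_{B_i}^{l,\FF}\big)$ is by construction the quotient of the single diffeological category $\PP(\FF)$ by the fibres of its transport functor $T\big(\pi_{B_i}^{l,\FF}\big)$. Writing $q_i^l\colon\PP(\FF)\to\HH\big(\pi_{B_i}^{l,\FF}\big)$ for the quotient functors, producing $\phi^l$ amounts to checking that the $B_1$-equivalence relation refines the $B_2$-equivalence relation on morphisms, i.e.\ that $T\big(\pi_{B_1}^{l,\FF}\big)(\gamma_1) = T\big(\pi_{B_1}^{l,\FF}\big)(\gamma_2)$ implies $T\big(\pi_{B_2}^{l,\FF}\big)(\gamma_1) = T\big(\pi_{B_2}^{l,\FF}\big)(\gamma_2)$. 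Granting this, $\phi^l$ is the unique map with $\phi^l\circ q_1^l = q_2^l$; it is a morphism of diffeological groupoids because $q_1^l$ and $q_2^l$ are quotient functors, and it is smooth because $q_1^l$ is a subduction while $q_2^l$ is smooth.

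The key step is an intertwining relation furnished by the surjective pseudo-bundle morphism $f^{\FF}\colon\Gamma_{l}(\pi_{B_1})^{\FF}\to\Gamma_{l}(\pi_{B_2})^{\FF}$, $\big(x,[\sigma]^{l}_{x}\big)\mapsto\big(x,[f\circ\sigma]^{l}_{x}\big)$, exhibited just before the theorem. Hypothesis~(1) of Definition~\ref{singmorph}, namely $f_{*}\circ\ell_1 = \ell_2$, says that $\ell_1(X)$ and $\ell_2(X)$ are $f$-related for every (possibly time-dependent) $X\in\FF$, so that their flows satisfy $f\circ\Fl^{\ell_1(X)}_{t,0} = \Fl^{\ell_2(X)}_{t,0}\circ f$ wherever the left-hand side is defined (this is where completeness of $\ell_i$ enters). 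Substituting into the explicit lifting-map formula~\eqref{Leqn} at level $l\leq k$ and using the definition of $f^{\FF}$ gives
\begin{gather*}
f^{\FF}\Big(L\big(\pi_{B_1}^{l,\FF}\big)\big(\gamma,[X]^{\g},d;x,[\sigma]^{l}_{x}\big)(t)\Big) = L\big(\pi_{B_2}^{l,\FF}\big)\big(\gamma,[X]^{\g},d;x,[f\circ\sigma]^{l}_{x}\big)(t)
\end{gather*}
for all $t$, and evaluating at $t=d$ yields $f^{\FF}\circ T\big(\pi_{B_1}^{l,\FF}\big)(\gamma) = T\big(\pi_{B_2}^{l,\FF}\big)(\gamma)\circ f^{\FF}$ fibrewise. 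Hypothesis~(2) of Definition~\ref{singmorph} makes $f^{\FF}$ surjective on each fibre, which upgrades this to the required refinement of equivalence relations: given any $\tau$ in the fibre of $\Gamma_{l}(\pi_{B_2})^{\FF}$ over $\gamma_i(0)$, choose $\sigma$ with $f^{\FF}(\sigma)=\tau$; then $T\big(\pi_{B_2}^{l,\FF}\big)(\gamma_j)(\tau) = f^{\FF}\big(T\big(\pi_{B_1}^{l,\FF}\big)(\gamma_j)(\sigma)\big)$ for $j=1,2$, so agreement of the two $B_1$-transports on $\sigma$ forces agreement of the two $B_2$-transports on $\tau$. This produces $\phi^l$.

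For the commuting squares I would use that the hierarchy projections $\Pi_{B_i}^{m,l}$ of Theorem~\ref{hierarchy} are, by construction, also descents of $\id\colon\PP(\FF)\to\PP(\FF)$ (they are available precisely because the level-$l$ equivalence relation refines the level-$m$ one via the jet projection $\pi_{B_i}^{m,l}$, and $\pi_{B_i}^{m,l}$ commutes with $f^{\FF}$). Hence both $\Pi_{B_2}^{m,l}\circ\phi^l$ and $\phi^m\circ\Pi_{B_1}^{m,l}$ become $q_2^m$ upon precomposition with $q_1^l$, and since $q_1^l$ is surjective they coincide; this is the commutativity of the square for every $m\leq l\leq k$. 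The three-dimensional diagram of the statement is then nothing but the assembly of these squares with the two hierarchy diagrams of Theorem~\ref{hierarchy}, one for $B_1$ and one for $B_2$, together with the top square involving $\Pi^{\infty,\g}_{B_i}$ and $\phi^{\g},\phi^{\infty}$.

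The main obstacle is not conceptual: once Theorems~\ref{liftsmooth} and~\ref{hierarchy} are available, the whole argument reduces to bookkeeping of nested equivalence relations on $\PP(\FF)$, and the essential input is the fibrewise surjectivity of $f^{\FF}$ supplied by Definition~\ref{singmorph}(2). The one point demanding genuine care is the germinal case $k=\g$: there $\Gamma_{\g}$ is neither a manifold nor a projective limit of manifolds, so the displayed intertwining of lifting maps must be checked directly from~\eqref{Leqn} rather than by reduction to finite order, and fibrewise surjectivity of $f^{\FF}$ rests on lifting an invariant germ of a section of $\pi_{B_2}$ to an invariant germ of a section of $\pi_{B_1}$ along the local section $\kappa$ of $f$ from Definition~\ref{singmorph}(2). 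Smoothness of each $\phi^l$, the groupoid-morphism property, and all the commutativities are then formal consequences of the universal property of quotient diffeologies and of the functoriality of the constructions in $\PP(\FF)$.
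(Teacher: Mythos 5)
Your proposal is correct and follows essentially the same route as the paper: descend $\id\colon\PP(\FF)\rightarrow\PP(\FF)$ through the quotients by establishing the intertwining $f^{\FF}\circ T\big(\pi_{B_1}^{l,\FF}\big)(\gamma) = T\big(\pi_{B_2}^{l,\FF}\big)(\gamma)\circ f^{\FF}$ (which the paper packages as commutativity of the square of lifting maps) and then use fibrewise surjectivity of $f^{\FF}$ to transfer equality of transports from $B_{1}$ to $B_{2}$, with the commuting squares obtained by precomposing with the surjective quotient maps from $\PP(\FF)$. Your extra detail deriving the intertwining from $f$-relatedness of $\ell_{1}(X)$ and $\ell_{2}(X)$ is exactly what the paper leaves implicit in the phrase ``since $f$ is a morphism, the diagram commutes.''
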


\begin{proof}
Our first task is to show that the map $\id\colon \PP(\FF)\rightarrow\PP(\FF)$ does indeed descend to a~well-defined morphism of each quotient. Suppose then that $\big(\gamma_{1},[X_{1}]^{\g},d_{1}\big)$ and $\big(\gamma_{2},[X_{2}]^{\g},d_{2}\big)$ in~$\PP(\FF)$ satisfy
\begin{gather}\label{hyp}
T\big(\pi_{B_{1}}^{k,\FF}\big)(\gamma_{1},X_{1},d_{1})\big(x,[\sigma]^{k}_{x}\big) =T\big(\pi_{B_{1}}^{k,\FF}\big)(\gamma_{2},X_{2},d_{2})\big(x,[\sigma]^{k}_{x}\big)
\end{gather}
for all $\big(x,[\sigma]_{x}^{k}\big)$ in $\Gamma_{k}(\pi_{B_{1}})^{\FF}$. Then since $f$ is a morphism, the diagram
\begin{center}
\begin{tikzcd}
\PP(\FF)\times_{s,\pi_{B_{1}}^{k,\FF}}\Gamma_{k}(\pi_{B_{1}})^{\FF} \ar[r, "L_{k}^{1}"] \ar[d, "\id\times f^{\FF}"] & \PP\big(\Gamma_{k}(\pi_{B_{1}})^{\FF}\big) \ar[d,"\PP(f^{\FF})"] \\ \PP(\FF)\times_{s,\pi_{B_{2}}^{k,\FF}}\Gamma_{k}(\pi_{B_{2}})^{\FF} \ar[r, "L_{k}^{2}"] & \PP\big(\Gamma_{k}(\pi_{B_{2}})^{\FF}\big)
\end{tikzcd}
\end{center}
commutes. Therefore, for each $i = 1,2$, we have
\begin{align*}
T\big(\pi_{B_{2}}^{k,\FF}\big)\big(\gamma_{i},[X_{i}]^{\g},d_{i}\big)\big(f^{\FF}\big(x,[\sigma]^{k}_{x}\big)\big)
&= L_{k}^{2}\big(\big(\gamma_{i},[X_{i}]^{\g},d_{i}\big);f^{\FF}\big(x,[\sigma]^{k}_{x}\big)\big)(d_{i})
\\
&= \PP(f^{\FF})\big(L_{k}^{1}\big(\gamma_{i},[X_{i}]^{\g},d_{i};x,[\sigma]^{k}_{x}\big)\big)(d_{i})
\\
&= f^{\FF}\big(T\big(\pi_{B_{1}}^{k,\FF}\big)\big(\gamma_{i},[X_{i}]^{\g},d_{i}\big)\big(x,[\sigma]_{x}^{k}\big)\big)
\end{align*}
for all $\big(x,[\sigma]_{x}^{k}\big)\in\Gamma_{k}(\pi_{B_{1}})^{\FF}$, so surjectivity of $f^{\FF}$ together with the hypothesis \eqref{hyp} tells us that $T\big(\pi_{B_{2}}^{k,\FF}\big)\big(\gamma_{1},[X_{1}]^{\g},d_{1}\big) = T\big(\pi_{B_{2}}^{k,\FF}\big)\big(\gamma_{2},[X_{2}]^{\g},d_{2}\big)$. Therefore $\id\colon \PP(\FF)\rightarrow\PP(\FF)$ does indeed descend to a map $\phi^{k}\colon \HH\big(\pi_{B_{1}}^{k,\FF}\big)\rightarrow\HH\big(\pi_{B_{2}}^{k,\FF}\big)$, whose smoothness follows from that of $\id$, and which is a homomorphism by functoriality of $\id$.

We have thus proved that each of the diagrams\vspace{-1ex}
\begin{center}
\begin{tikzcd}
\PP(\FF) \ar[r,"\id"] \ar[d,"\Pi_{B_{1}}^{k}"] & \PP(\FF) \ar[d,"\Pi_{B_{2}}^{k}"]
\\ \HH\big(\pi_{B_{1}}^{k,\FF}\big) \ar[r,"\phi^{k}"] & \HH\big(\pi_{B_{2}}^{k,\FF}\big)
\end{tikzcd}
\end{center}
commutes, where $\Pi_{B_{i}}^{k}$ is the quotient of $\PP(\FF)$ onto $\HH\big(\pi_{B_{i}}^{k,\FF}\big)$. It follows then that for $l\leq k$, the diagram\vspace{-1ex}
\begin{center}
\begin{tikzcd}
\PP(\FF) \ar[r,"\id"] \ar[d,"\Pi_{B_{1}}^{k}"]& \PP(\FF) \ar[d,"\Pi_{B_{2}}^{k}"] \\ \HH\big(\pi_{B_{1}}^{k,\FF}\big) \ar[r,"\phi^{k}"] \ar[d,"\Pi_{B_{1}}^{l,k}"] & \HH\big(\pi_{B_{2}}^{k,\FF}\big) \ar[d,"\Pi_{B_{2}}^{l,k}"] \\ \HH\big(\pi_{B_{1}}^{l,\FF}\big) \ar[r,"\phi^{l}"] & \HH\big(\pi_{B_{2}}^{l,\FF}\big)
\end{tikzcd}
\end{center}
commutes, and then the result follows by Theorem~\ref{hierarchy}.
\end{proof}

\section{Outlook}\label{sec5}

In the author's estimation, there are three primary questions arising from this work that have yet to be answered.

Firstly, an assumption that we have had to impose in Definition~\ref{singfolbund} is that morphisms of~sheaves of smooth sections are smooth with respect to the diffeology of Proposition~\ref{functprop}. It~is far from clear that this assumption is really necessary. That is, it appears possible that any morphism of sheaves of smooth sections is automatically smooth with respect to this diffeology. Indeed, the domain considerations present in Proposition~\ref{functprop} are automatically satisfied by maps arising from morphisms of sheaves, and attempts thus far to construct a morphism of sheaves which is not smooth with respect to this diffeology have proved unsuccessful. A~proof that any morphism of sheaves of smooth sections is itself diffeologically smooth would allow us to remove these seemingly extraneous assumptions.

Secondly, it is clear from Examples~\ref{triv},~\ref{regex} and~\ref{equiex} that in many situations, a singular partial connection on a fibre bundle induces a singular foliation of its total space by projectable vector fields. This foliation cannot, however, be expected to meet the requirements of Definition~\ref{singdef} (nor the equivalent definitions using compactly supported vector fields, for instance the one used in~\cite{iakovos2}), since projectable vector fields are not closed under multiplication by arbitrary smooth functions on the total space. This suggests that Definition~\ref{singdef} might to be relaxed to allow for closure of vector fields under the sheaf of projectable functions. Such a modification will have no effect on the integration theorem (see~\cite[Theorem~4.2(e)]{sussmann}). Having relaxed Definition~\ref{singdef}, a proof that singularly foliated bundles admit foliations of their total spaces will require a proof that a presheaf of Lie--Rinehart algebras (such as the image of a singular partial connection) becomes a sheaf of Lie--Rinehart algebras under sheafification. This question does not appear to have been studied in the literature.

Finally, and most importantly, it would be interesting to prove Conjecture~\ref{Conjecture} and to~deter\-mine whether the techniques of this paper can be generalised to recover the Androulidakis--Skandalis holonomy groupoid in generality. That is, for any foliation $(M,\FF)$, one seeks a pseudo-bundle $\pi_{\BB}\colon \BB\rightarrow M$ and a lifting map $\PP(\FF)\times_{s,\pi_{\BB}}\BB\rightarrow\PP(\BB)$, for which quotient of $\PP(\FF)$ by the fibres of the associated transport functor is isomorphic to the Androulidakis--Skandalis groupoid. We~have two suggestions in this direction.
\begin{enumerate}\itemsep=0pt
\item It may be possible define a diffeological pseudo-bundle of germs of slices, whose fibre over~$x\in M$ consists of all germs of slices through $x$, using a diffeology similar to that of the pseudo-bundles of germs given in this paper. Since flows of elements of $\PP(\FF)$ send slices to slices, one would obtain the sought-after lifting map using flows. The~results of Garmendia--Villatoro~\cite{VilGar1} suggest that the resulting holonomy groupoid would be isomorphic to the Androulidakis--Skandalis groupoid.
\item As suggested by one of the referees, one could alternatively attempt to equip the space $N\FF:=\sqcup_{x\in M}N_{x}L$ of normal fibres of $(M,\FF)$ considered in~\cite{iakovos4} with a diffeology with respect to which it is a pseudo-bundle over $M$. This pseudo-bundle would admit a (leafwise) Bott connection~\cite[p.~374]{iakovos4}, which could be used to lift elements of $\PP(\FF)$ to $N\FF$.
\end{enumerate}

\subsection*{Acknowledgements}

This research was supported by the Australian Research Council, through the Discovery Project grant DP200100729. I extend special thanks to V.~Mathai, for taking me on as a postdoc at the University of Adelaide, and to I.~Androulidakis, for enlightening discussions and correspondence in late 2018 and for encouraging me to think about singular foliations. I thank B.~McMillan, for~helpful discussions concerning conservation laws. Finally, I extend deep thanks to the anonymous referees, whose careful consideration and critique of the paper have greatly improved its exposition.

\pdfbookmark[1]{References}{ref}
\LastPageEnding

\end{document}